\numberwithin{equation}{subsection}
\newcommand{\field}[1]{\mathbb{#1}}
\newcommand{\R}{\field{R}}
 \newcommand{\Sp}{\field{S}}
\newcommand{\C}{\field{C}}
\newcommand{\N}{\field{N}}
\def\H{\mathsf{H}}
\def\D\theta ij#1{\dis \frac{\partial #1}{\partial \theta_i^j}}
\def\sqr{{\hskip1pt\vcenter{\vbox{\hrule height.4pt
\hbox{\vrule width.4pt height4pt\kern4pt
\vrule width.4pt}
\hrule height.4pt}}}}
\def\im{\textrm{Im}}
\def\cal{\mathcal}
\def\trans{\ifmmode{\rm \frown\mkern-16.8mu \vert
\mkern8mu}\else{$\frown\mkern-16.8mu\vert\mkern8mu$}\fi\relax}
\def\ap{\rightarrow}
\def\dis{\displaystyle}
\def\D{\Delta}
\def\dis{\displaystyle}
\def\a{\alpha}
\def\b{\beta}
\def\g{\gamma}
\def\G{\Gamma}
\def\t{\tau} \def\d{\delta}
\def\r{\rho}
\def\l{\lambda}
\def\s{\sigma}
\def\S{\Sigma}
\def\O{\Omega}
\def\p{\partial}
\def\o{\omega}
\newtheorem{The1}{Theorem}
\newtheorem{The}{Theorem}[subsection]
 \newtheorem{Pro}{Proposition}[subsection]
\newtheorem{Def}{Definition}[subsection]
\newtheorem{Cor}{Corollary}[subsection]
\newtheorem{Lem}{Lemma}[subsection]
\newtheorem{Rem}{Remark}[subsection]
\newtheorem{Ass}{Assumptions}[subsection]
\newtheorem{Obs}{Observation}[subsection]
\title{{ M\"{o}bius transformations and the configuration space of a Hilbert snake}}
\begin{document}
\maketitle

  \begin{center}\author{{F.  PELLETIER, }\footnote{Universit\'e de Savoie, Laboratoire de Math\'ematiques (LAMA)
Campus Scientifique,
73376 Le Bourget-du-Lac Cedex, France.
E-Mail: pelletier@univ-savoie.fr.}
 {R. SAFFIDINE }\footnote{Universit\'e Ferhat Abbas, S\'etif-$1$, facult\'e des sciences, d\'epartement de math\'ematiques, Alg\'erie: r\_ saffidine@yahoo.fr }
  {\& N. BENSALEM}\footnote{Universit\'e Ferhat Abbas, S\'etif-$1$, facult\'e des sciences, d\'epartement de math\'ematiques, Alg\'erie:  naceurdine\_bensalem@yahoo.fr }}
   \end{center}

\begin{abstract}
The purpose of this paper is to give a simpler proof to the problem of controllability of a Hilbert snake \cite{PeSa}.
Using the action of the M\"{o}bius group of the unite sphere on the configuration space, in the context of a separable Hilbert  space.
We give a generalization of the  Theorem of accessibility contained in \cite{Ha} and \cite{Ro} for articulated arms and snakes  in a finite dimensional Hilbert space.
    \end{abstract}

\smallskip

\noindent\textit{classification}: 22F50, 34C40, 34H, 53C17, 53B30, 53C50, 58B25, 93B03.

\noindent \textit{Keywords}: { M\"{o}bius Lie groups, Lorentz transformation, Sub-Riemnnian geometry, Controllability, Hilbert snake.}

\date{}
\section{Introduction and results}\label{intro}
 The group of M\"{o}bius transformations  of a finite dimensional space  is generated by inversions of spheres. It is one of the fundamental geometrical groups. M\"{o}bius transformations
preserve spherical shapes and also
 the angles between pairs of curves. This group can be considered as the conformal group of the  sphere identified with  the  compactification of a finite dimensional space.

  If we denote by $\mathfrak{M}(\Sp^n)$  the M\"{o}bius transformations of such a sphere $\Sp^n$ which preserve the orientation,  it is known that $\mathfrak{M}(\Sp^n)$  is isomorphic to the group $SO_0(n,1)$   which is the connected component of the identity of $O(n,1)$.  All these results can be generalized to the context of a Hilbert space (cf. \cite{Be1} and  \cite{La} for instance).  Therefore the group $\mathfrak{M}(\Sp_{\mathbb{H}})$ of M\"{o}bius transformations of the unit sphere $\Sp_{\mathbb{H}}$ of a Hilbert space $\mathbb{H}$ is also isomorphic to some subgroup $SO_{0}(\mathbb{H},1)$ of the group $O(\mathbb{H},1)$ of linear Lorentz transformations of  a Lorentz structure  on  ${\cal H}=\R\oplus\mathbb{H}$ (for more details see Subsection \ref{Alorentz}). If we consider $SO(\mathbb{H},1)$ as a subgroup of the group $GL({\cal H})$ of continuous automorphisms of $\cal H$, we can look for the intersection $SO_{HS}(\mathbb{H},1)$ of $SO(\mathbb{H},1)$ with the subgroup $GL_{{HS}}({\cal H})$ of Hilbert-Schmidt automorphisms of $GL({\cal H})$. According to \cite{Go}, $SO_{HS}(\mathbb{H},1)$ can be seen as  a limit of an  increasing sequence
  \begin{eqnarray}\label{suitecrois}
SO(\mathbb{H}_{2},1)\subset\cdots\subset SO(\mathbb{H}_{n},1)\subset \cdots \subset SO_{HS}(\mathbb{H},1)\subset  GL_{{HS}}({\cal H}).\nonumber
\end{eqnarray}
 Via the previous isomorphism from $SO (\mathbb{H},1)$ to  $\mathfrak{M}(\Sp_{\mathbb{H}})$ we obtain a subgroup $\mathfrak{M}_{HS}(\Sp_{ \mathbb{H}})$ of the group of M\"{o}bius transformations of the unit sphere $\Sp_{\mathbb{H}}$.

On the other hand, as in the finite dimensional situation, the Lie algebra $\mathfrak{g}$ of $SO_{HS}(\mathbb{H},1)$ has a decomposition of type
$\mathfrak{g}=\mathfrak{h}\oplus \mathfrak{s}$
where $\mathfrak{s}$ is the Lie algebra of the subgroup $SO_{{HS}}({\cal H})$ of the Hilbert-Schmidt isometries of the Hilbert space $\cal H$. Again $\mathfrak{h}$ can be obtain as an adequate limit of finite dimensional subspace $\mathfrak{h}_{n}$ which is a factor of the classical  decomposition $\mathfrak{g}_{n}=\mathfrak{h}_{n}\oplus\mathfrak{s}_{n} $ of the Lie algebra $\mathfrak{g}_{n}$ of  $SO(\mathbb{H}_{n},1)$. In this way  we get a natural sub-Riemannain structure on $\mathfrak{M}_{{HS}}(\Sp_{\mathbb{H}})$ at the same time directly from $\mathfrak{h}$ and as limit of the canonical sub-Riemannian structure on each $SO(\mathbb{H}_{n},1)$. Now,  we know that in the finite dimensional case,  each pair of  elements of $SO(\mathbb{H}_{n},1)$ can be joined by a horizontal path. Unfortunately, this is no longer true in $\mathfrak{M}_{{HS}}(\Sp_{\mathbb{H}})$.
Our first result is to proved that there exists in $\mathfrak{M}_{{HS}}(\Sp_{\mathbb{H}})$ a dense subgroup  $\mathfrak{M}^{1}_{{HS}}(\Sp_{\mathbb{H}})$, provided with its own Lie Banach group structure, such that each pair of  elements of $\mathfrak{M}^{1}_{{HS}}(\Sp_{\mathbb{H}})$ can be joined by a horizontal path (cf. Theorem \ref{subR}). This Theorem allows us to give a simpler proof of  the accessibility  result in the problem of a Hilbert snake obtained in \cite{PeSa}.\\

More precisely, recall that a Hilbert snake of length $L$ is a continuous piecewise
$C^1$-curve $S : [0,L] \ap  \field{H}$, arc-length parameterized  such that  $S(0) = 0$.   Given a fixed partition $\cal P$ of $[0,L]$, the set  ${\cal C}^L_{\cal P}$ of such curves will be called the configuration set and carries a natural structure of Banach manifold. To any "configuration" $u\in {\cal C}^L_{\cal P}$ is naturally associated the end map:
${\cal E}(u)=\dis\int_0^Lu(s)ds$. This map is smooth and its kernel has a canonical complemented subspace which gives rise to a closed distribution $\cal D$ on ${\cal C}^L_{\cal P}$.
The problem of controllability of the ''head'' $S(L)$ of a Hilbert snake can be transformed in the following accessibility problem in ${\cal C}^L_{\cal P}$ (cf. section \ref{result}):

Given an initial (resp. final) configuration $u_0$ (resp. $u_1$) in ${\cal C}^L_{\cal P}$, such that ${\cal E}(u_i)=x_i$, $i=0,1$, find a piecewise $C^1$ horizontal curve $\g:[0,T]\ap {\cal C}^L_{\cal P}$ (i.e. $\g$ is tangent to $\cal D$) and  which joins $u_0$ to $u_1$. \\

Therefore, given any configuration $u\in {\cal C}^L_{\cal P}$ we have to look for the accessibility set ${\cal A}(u)$ of  all configurations $v\in {\cal C}^L_{\cal P}$ which can be joined from $u$ by a piecewise $C^1$ horizontal curve. It is shown in \cite{PeSa} that there exists a  canonical distribution $\bar{\cal D}$ which contains the previous horizontal distribution $\cal D$ which is integrable and each accessibility set ${\cal A}(u)$ is a dense subset of the maximal integral manifold of $\bar{\cal D}$  which contains $u$.

As in the finite dimensional case (see \cite{Ha} and  \cite{Ro}),  we have  a natural action $\mathfrak{A}$ of the group $\mathfrak{M}(\Sp_{\mathbb{H}})$
 on ${\cal C}^L_{\cal P}$.  Since  we have a canonical isomorphism between $\mathfrak{M}_{{HS}}(\Sp_{\mathbb{H}})$ and  $SO_{HS}({\mathbb{H}},1)$,  let  $\mathfrak{M}^{1}_{{HS}}(\Sp_{\mathbb{H}})$ be  the subgroup of $\mathfrak{M}_{{HS}}(\Sp_{\mathbb{H}})$ which is associated to $SO^{1}_{HS}({\mathbb{ H}},1)\subset SO_{HS}({\mathbb{ H}},1)$. Then we have the follwing result

 \begin{The1}\label{1}${}$
 \begin{enumerate}
\item The orbit  through $u\in{\cal C}^L_{\cal P}$ of the restriction of the action $\mathfrak{A}$ to  $\mathfrak{M}_{{HS}}(\Sp_{\mathbb{H}})$  is exactly the maximal integral manifold ${\cal L}(u)$ of $\bar{\cal D}$ which contains $u$.
\item The orbit  ${\cal A}^{1}(u)$ through $u\in{\cal C}^L_{\cal P}$ of the restriction of the action $\mathfrak{A}$ to  $\mathfrak{M}^{1}_{{HS}}(\Sp_{\mathbb{H}})$ is contained in ${\cal A}(u)$ and it is a dense subset of ${\cal L}(u)$. In particular ${\cal A}(u)$ is a dense subset of ${\cal L}(u)$.
 \end{enumerate}
 \end{The1}

This paper is organized as follows.  Section \ref{Mobius}  contains in its first part  all the definitions and results about M\"{o}bius transformations in the  Hilbert space context  which are needed to prove the announced results of the sub-Riemannian structure on $\mathfrak{M}_{{HS}}(\Sp_{\mathbb{H}})$. Properties of the Hilbert-Schmidt group $\mathfrak{M}_{{HS}}(\Sp_{\mathbb{ H}})$ are described in Section \ref{SubRM}.   In Section \ref{HilbSn}, according to \cite{PeSa} we first  recall all the context concerning the problem of controllability of a Hilbert snake. Then we apply the results of Section \ref{Mobius}  to prove Theorem \ref{1}. Finally some technical proofs used in Section \ref{Mobius}  are presented in Section \ref{A}.

\section{M\"{o}bius transformations of a Hilbert space}\label{Mobius}
\subsection{M\"{o}bius transformations}${}$\\
In this paper $\mathbb{H}$ is a fixed Hilbert space on $\R$ and $\{ e_{i}\}_{i\in I}$ will denote a Hilbert basis of  $\mathbb{H}$ where $I$ is either the finite set $\{1,\cdots,n\}$ with $ n\geq 2$ or $I=\N\setminus\{0\}$ and we
denote by  $<\;,\;>$ the inner product  and $|\;|
$ the associated norm. With these notations, we can identify $\mathbb{H}$ with $l^2(I)$ and each $x\in\mathbb{H}$ is identified with the sequence $(x_i)$ where $x_i=< x,e_i>, \;:i\in I$.\\

Let  $\H$  be any  hyperplane   in $\mathbb{H}$. We can always choose a Hilbert basis  $\{ e_{i}\}_{i\in I}$ of $\mathbb{H}$ such that  $\{e_i\}_{i>1}$ is a Hilbert basis of $\mathsf{H}$  and we
also denote by  $<\;,\;>$ the induced  inner product  in $\H$ and $|\;|$ the associated norm. With these notations, we consider the set  ${\widehat{\H}}=\mathsf{H}\cup\Big\{\infty\Big\}$ equipped with the following topology:  \ $U\subset{\widehat{\H}}$ is an open set if and only if $U\cap\mathsf{H}$ is an open set  and $\mathsf{H} \backslash U$ is a bounded set in $\H$, if  $\infty\in U$.
In this section we will recall the classical properties of the M\"{o}bius  transformations of $\H$  which the reader can find in ( \cite{Be1}, \cite{Be} and \cite{La} ). We first introduce the following notations:

$\bullet$ Given $a\in\mathsf{H}$ and $r,t\in\mathbb{R}$ with  $r>0$,  a M\"{o}bius sphere in  ${\widehat{\H}}$  is either a classical sphere in $\H$:
\begin{equation}
    S(a,r)=\Big\{x\in\mathsf{H}\;:  |x-a|=r\Big \},
\end{equation}
 or an extended hyperplane :	
\begin{equation}
 P(a,t)=\Big\{x\in\mathsf{H}\;:  \langle x,a\rangle=t\Big\}\cup \Big\{\infty\Big\}.
\end{equation}.

$\bullet$  A reflection in  a  M\"{o}bius sphere $S$ is a transformation in  ${\widehat{\H}}$  which is either:	
\begin{eqnarray*}
 \rho(x)= a+\frac{r^{2}(x-a)}{|x-a|^{2}},\;\; \rho(a)=\infty\;\texttt{and} \;\; \rho(\infty)=a,
\end{eqnarray*}
if $S$ is of type $S(a,r)$, or:

	\begin{eqnarray*}
\rho(x)= x+\frac{2(t-\langle a,x\rangle)}{|a|^{2}}a,  \; \rho(\infty) =\infty,
\end{eqnarray*}
if $S$ is of type $ P(a,t)$.

$\bullet$ An orthogonal transformation of $\H$ is a linear map  $\o:\H\ap\H$ such that
 \begin{equation*}
  |\o(x)-\o(y)|=|x-y|  \quad  \textrm{for all} \ x,y\in\mathsf{H}.
\end{equation*}

$\bullet$ A similitude in  ${\widehat{\H}}$ is a transformation $\s$ such that
\begin{equation*}
\s(x)=\a\o(x)+a, \;\; \rho(\infty)=\infty
\end{equation*}
where $\o$ is an orthogonal transformation of $\H$, $\a\in \R$ and a fixed $a\in\H$.

\begin{Def} \cite{La}
A {\bf M\"{o}bius transformation} of $\widehat{\H}$ is a  bijection on  ${\widehat{\H}}$, which is a finite composition of reflections and similitudes.
\end{Def}

We have then the following characterizations:

\newpage
\begin{The}\cite{La}
\begin{enumerate}
\item A  bijection  $\phi$ of  ${\widehat{\H}}$  is a  M\"{o}bius transformation  of $\widehat{\H}$ if and only if
 the  image and the  inverse image by $\phi$ of M\"{o}bius spheres are M\"{o}bius spheres.
 \item A M\"{o}bius transformation $\phi$ is a similitude if and only if $\phi(\infty)=\infty$.
 \end{enumerate}
\end{The}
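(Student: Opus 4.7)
The plan is to prove the two assertions simultaneously: the direct implications follow from case-checks on the generators of the M\"obius group, while both converses reduce, via a single trick, to one key lemma which is the main technical step.

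\textbf{Direct directions.} First I would show that every reflection $\rho$ and every similitude $\s$ sends M\"obius spheres onto M\"obius spheres; the statement then extends to finite compositions. For a similitude $\s(x)=\a\o(x)+a$ the calculation is immediate from $\s(S(b,r))=S(\a\o(b)+a,|\a|r)$ and $\s(P(b,t))=P(\o(b),\a t+\langle\o(b),a\rangle)$, using that $\o$ is an $\R$-linear isometric bijection. For a reflection in a M\"obius sphere I would check the four cases (sphere- or hyperplane-reflection applied to a sphere or a hyperplane) directly from the explicit formulas given in the excerpt, paying attention to the fact that whenever the centre of inversion lies on the target sphere the image acquires the point $\infty$ and becomes an extended hyperplane (and conversely). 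The implication ``$\phi$ similitude $\Rightarrow \phi(\infty)=\infty$'' in (2) is built into the definition.

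\textbf{Reduction of the converses to one key lemma.} Now suppose $\phi$ is a bijection of $\widehat{\H}$ such that both $\phi$ and $\phi^{-1}$ send M\"obius spheres onto M\"obius spheres, and set $p=\phi(\infty)$. If $p\neq\infty$, I would pick any $r>0$ and let $\rho$ be the reflection in $S(p,r)$; then $\rho(p)=\infty$, so $\psi:=\rho\circ\phi$ fixes $\infty$ and still enjoys the two-sided sphere-preservation property (since $\rho=\rho^{-1}$ is already a M\"obius transformation by the direct direction). Granting the key lemma stated in the next paragraph, $\psi$ is a similitude; then $\phi=\rho\circ\psi$ is a composition of a reflection and a similitude, hence a M\"obius transformation. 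If $p=\infty$, applying the key lemma directly to $\phi$ supplies both the remaining half of (1) and the nontrivial direction of (2).

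\textbf{The main obstacle: the key lemma.} The heart of the proof is the statement that a bijection $\phi$ of $\widehat{\H}$ fixing $\infty$ and preserving M\"obius spheres in both directions must be a similitude. To prove it, I would argue as follows. The M\"obius spheres through $\infty$ are exactly the extended hyperplanes $P(a,t)$, so the restriction $\phi|_{\H}$ is a bijection of $\H$ sending affine hyperplanes bijectively onto affine hyperplanes; since three points are collinear if and only if every hyperplane containing two of them contains the third, it also sends lines to lines. By the fundamental theorem of affine geometry, combined with the fact that $\R$ admits no non-trivial field automorphism, $\phi|_{\H}$ is genuinely $\R$-affine: there exist an $\R$-linear bijection $A:\H\ap\H$ and $a\in\H$ with $\phi(x)=A(x)+a$. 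The second preservation property then yields $\phi(S(0,1))=S(b,r)$ for some $b,r$, which unwinds to $A(S(0,1))=S(b-a,r)$; since $A$ is linear the set $A(S(0,1))$ is centrally symmetric about the origin, forcing $b=a$, and then $|A(x)|=r$ on unit vectors, i.e.\ $|A(x)|=r|x|$ on all of $\H$. Hence $\o:=r^{-1}A$ is orthogonal and $\phi(x)=r\o(x)+a$ is a similitude. The genuine difficulty I expect to meet is justifying the fundamental theorem of affine geometry in the infinite-dimensional Hilbert setting; once that is granted, the remaining argument is essentially one line of linear algebra.
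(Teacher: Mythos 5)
The paper offers no proof of this statement: it is quoted directly from Lawson \cite{La} (see also Benz \cite{Be1}), so there is no in-paper argument to measure yours against. Your plan is essentially the standard proof of this classical fact and it is sound. The direct directions are indeed routine case-checks on the generators; the reduction of both converses to the key lemma (a two-sided sphere-preserving bijection fixing $\infty$ is a similitude) by pre-composing with an inversion centred at $\phi(\infty)$ is exactly the classical device, and it is legitimate because that inversion is already known to be a M\"obius transformation by the direct part. For the key lemma, your chain works: since $\phi$ fixes $\infty$ it must send genuine spheres to genuine spheres and extended hyperplanes to extended hyperplanes; the collinearity criterion (three points are collinear iff every closed affine hyperplane through two of them contains the third) holds in a Hilbert space by Hahn--Banach; and the fundamental theorem of affine geometry is a purely algebraic statement valid for affine spaces of dimension at least $2$ over any field, with no finiteness hypothesis, so it applies here, and the triviality of $\mathrm{Aut}(\R)$ makes the linear part genuinely $\R$-linear. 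The one point that genuinely needs the extra argument you supply is continuity: the linear part $A$ produced by the affine-geometry theorem is a priori unbounded, and your observation that $\phi(S(0,1))$ must be a genuine sphere (it cannot contain $\infty=\phi(\infty)$), together with the central-symmetry argument, recovers $|A(x)|=r|x|$ and hence boundedness and orthogonality of $r^{-1}A$ in one stroke. In short, the only external input is the infinite-dimensional fundamental theorem of affine geometry, which you correctly identify as the crux and which is a citable classical result; with that granted, your proof is complete in outline.
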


 Among the set of reflections, we have a particular one which is the reflection in $S(0,1)$ {\it i.e.}

 $\rho_0(x)=\dis\frac{x}{|x|^{2}}$ and $ \rho_0(0)=\infty, \; \rho_0(\infty)=0.$

 We have then:

 \begin{Pro}\cite{Be1} Let  $\mu$  be a  M\"{o}bius transformation. If $\mu(S(0,1))=S(a,r)$ then $\mu\circ \rho_0\circ\mu^{-1}$ is a reflection in $S(a,r)$. If  $\mu(S(0,1))=P(a,t)\cup\Big\{\infty\Big\}$
 then, $\mu\circ \rho_0\circ\mu^{-1}$ is the reflection in $P(a,t)$.
\end{Pro}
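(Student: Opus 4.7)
The idea is to recognize $\phi := \mu\circ\rho_0\circ\mu^{-1}$ as a nontrivial M\"obius transformation fixing $\mu(S(0,1))$ pointwise, and to conclude via the uniqueness of the reflection in a given M\"obius sphere.

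First I would note that $\phi$ is itself a M\"obius transformation, since $\rho_0$ is one (being a reflection) and the M\"obius transformations of $\widehat{\H}$ form a group under composition. Next, using that $\rho_0$ fixes $S(0,1)$ pointwise, a direct substitution gives, for $y=\mu(x)$ with $x\in S(0,1)$,
\[
\phi(y)\;=\;\mu(\rho_0(\mu^{-1}(y)))\;=\;\mu(\rho_0(x))\;=\;\mu(x)\;=\;y,
\]
so $\phi$ fixes $\mu(S(0,1))$ pointwise. The involution identity $\rho_0^2=\mathrm{id}$ transfers by conjugation to $\phi^2=\mathrm{id}$, and $\phi\neq\mathrm{id}$ because $\rho_0$ exchanges $0$ and $\infty$, hence $\phi$ exchanges the two distinct points $\mu(0)$ and $\mu(\infty)$.

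I would then invoke the following rigidity property: any M\"obius transformation of $\widehat{\H}$ fixing a M\"obius sphere $S$ pointwise is either the identity or the reflection in $S$. Applied to $\phi$, this immediately delivers the two conclusions of the proposition --- in the hyperplane case $\mu(S(0,1))=P(a,t)\cup\{\infty\}$ already contains $\infty$, so $\phi(\infty)=\infty$ comes for free and agrees with the definition of the reflection in $P(a,t)$.

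The main obstacle is the rigidity statement used at the very end. In finite dimension it is classical (see \cite{Be1}): a M\"obius transformation fixing $S$ pointwise permutes the two connected components of $\widehat{\H}\setminus S$, and fixing, in addition, one point off $S$ forces it to be the identity; hence only the identity and the reflection in $S$ remain. To adapt this to the separable Hilbert setting, I would combine the sphere-preserving characterization stated just above with a reduction to a two-dimensional affine slice meeting $S$ transversally, where the statement is elementary. All the other verifications are purely formal consequences of the group structure on the M\"obius transformations and of the fact that $\rho_0$ is an involution whose fixed-point set is exactly $S(0,1)$.
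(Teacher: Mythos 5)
The paper offers no proof of this proposition --- it is quoted from \cite{Be1} --- so there is no in-text argument to compare yours against; I can only assess it on its own terms. Your reduction is the standard one and the skeleton is sound: $\phi=\mu\circ\rho_0\circ\mu^{-1}$ is a M\"obius transformation, it fixes $\mu(S(0,1))$ pointwise, it is an involution different from the identity, and everything then rests on the uniqueness statement that a M\"obius transformation fixing a M\"obius sphere pointwise is either the identity or the reflection in that sphere. That lemma is true (and is itself in \cite{Be1}), so if you are content to cite it, the proof is complete.

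What does not yet close is your own justification of that lemma, in two places. First, the dichotomy as you sketch it (``permutes the two components, and fixing in addition one point off $S$ forces the identity'') does not deliver the conclusion: to exclude a third possibility you must apply it to $\sigma\circ\phi$, where $\sigma$ is the reflection in $S$, and that composite is only known to preserve the two sides of $S$, not to fix any point off $S$. Second, the ``two-dimensional affine slice'' reduction is shaky, because a M\"obius transformation fixing $S$ pointwise need not preserve a chosen $2$-plane through a circle of $S$: by Proposition \ref{nS} its image is some M\"obius $2$-sphere through that circle, but there is a whole pencil of those. There is also a circularity trap in the natural alternative of conjugating $S$ to a hyperplane, since identifying the conjugated reflection as the reflection in $S$ is exactly the proposition being proved. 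The clean, non-circular organization is: (a) show that a M\"obius transformation fixing a M\"obius sphere pointwise \emph{and preserving each of its two sides} is the identity --- for an extended hyperplane this is an explicit computation with similitudes (the map fixes $\infty$, hence is a similitude, and fixing a hyperplane pointwise forces it to be the identity or the orthogonal reflection, the latter excluded because it swaps the sides; this works verbatim in the Hilbert setting), and the case of a sphere $S(a,r)$ reduces to it by conjugating with an explicit inversion centered at a point of $S(a,r)$ and invoking Proposition \ref{side}; then (b) note that $\rho_0$ swaps the two sides of $S(0,1)$, hence by Proposition \ref{side} your $\phi$ swaps the two sides of $\mu(S(0,1))$, so $\sigma\circ\phi$ fixes $\mu(S(0,1))$ pointwise and preserves its sides, whence $\sigma\circ\phi=\mathrm{id}$ and $\phi=\sigma$. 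With step (a) in place your argument is complete; as written, the key lemma is asserted rather than proved.
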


 Given a  M\"{o}bius sphere  $S$, if $S= S(a,r)$  the two sets

$ S^{-}(a,r)=\Big\{x\in\mathsf{H}:|x-a|^{2}<r^{2}\Big\}$

 $S^{+}(a,r)=\Big\{x\in\mathsf{H}:|x-a|^{2}>r^{2}\Big\}\cup\Big\{\infty\Big\} $

 are called the {\bf  two sides} of $S$. In the same way,  if $S= P(a,t)=\Big\{x\in\mathsf{H}|<x,a>=t\Big\}\cup \Big\{\infty\Big\}$,
the sets :

 $P^{-}(t,a)=\Big\{x\in \mathsf{H} :\langle a,x\rangle<t\Big\}$

 $P^{+}(t,a)=\Big\{x\in \mathsf{H} :\langle a,x\rangle>t \Big\}$

are the two sides of $P(a,t)$.
\begin{Pro}\cite{Be1} \label{side}
Let  $S_1$ and $S_2$ be the two sides of a M\"{o}bius sphere $S$. If $\mu$ is a M\"{o}bius transformation, then $\mu(S_1)$ and $\mu(S_2)$ are the two sides of the M\"{o}bius sphere $\mu(S)$. Moreover, if $\S$ is one side of $S$ then $\mu(\S)=\S$ implies $\mu(S)=S$.
\end{Pro}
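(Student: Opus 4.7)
My plan is to exploit that each M\"{o}bius transformation $\mu$ is a homeomorphism of $\widehat{\H}$, together with the identification of the two sides of a M\"{o}bius sphere $S$ with the two connected components of $\widehat{\H}\setminus S$.

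First I would observe that $\mu$ is a homeomorphism of $\widehat{\H}$: by definition it is a bijection which is a finite composition of reflections and similitudes, each of which is continuous with continuous inverse in the topology of $\widehat{\H}$ (a spherical reflection is an involution exchanging its centre with $\infty$, and a similitude extends by $\infty\mapsto\infty$). Next I would identify each side of $S$ with a connected component of $\widehat{\H}\setminus S$: if $S=S(a,r)$, then $S^{-}(a,r)$ is a convex open ball, hence connected, while $S^{+}(a,r)$ is its image under the reflection $\rho$ in $S$, hence also connected; if $S=P(a,t)$, then $P^{\pm}(t,a)$ are convex open half-spaces of $\H$ (neither containing $\infty$, which lies in $S$), hence connected. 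In both cases $\widehat{\H}\setminus S$ is the disjoint union of the two connected open sides.

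For the first assertion, Theorem 2.1.1 gives that $\mu(S)$ is a M\"{o}bius sphere, and since $\mu$ is a homeomorphism it maps $\widehat{\H}\setminus S$ onto $\widehat{\H}\setminus\mu(S)$ while permuting connected components. Therefore $\mu(S_{1})$ and $\mu(S_{2})$ are the two connected components of $\widehat{\H}\setminus\mu(S)$, which by the previous step are exactly the two sides of $\mu(S)$.

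For the second assertion I would show that the topological boundary in $\widehat{\H}$ of each side $\Sigma$ of $S$ equals $S$. For a spherical side this is immediate. For a half-space side $P^{+}(t,a)$ the key point is that every open neighbourhood of $\infty$ is $\{\infty\}$ together with the complement of a bounded subset of $\H$, so it meets the unbounded half-space; hence $\infty\in\partial P^{+}(t,a)$, and $\partial P^{+}(t,a)=P(a,t)=S$. Since $\mu$ is a homeomorphism, the hypothesis $\mu(\Sigma)=\Sigma$ then yields
\[
\mu(S)=\mu(\partial\Sigma)=\partial\mu(\Sigma)=\partial\Sigma=S.
\]
The main obstacle, though essentially topological bookkeeping, is the careful treatment of $\infty$ in the extended-hyperplane case: one must verify that $\infty$ lies in the boundary of each open half-space in the topology of $\widehat{\H}$, so that the boundary of a side is really the full M\"{o}bius sphere $P(a,t)$ rather than merely the linear hyperplane $\{x\in\H:\langle x,a\rangle=t\}$.
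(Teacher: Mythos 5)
Your argument is correct and complete. Note that the paper itself offers no proof of this proposition: it is imported from \cite{Be1} as a known fact, so there is no "paper's approach" to compare against; your write-up stands as a self-contained justification. The route you take --- identifying the two sides with the two connected components of $\widehat{\H}\setminus S$ (each side being open and connected, the bounded spherical side by convexity and the unbounded one as the continuous image of the ball under the reflection in $S$), then transporting components under the homeomorphism $\mu$, and finally recovering $S$ as the topological boundary of either side --- is the standard one, and you correctly isolate the only delicate point, namely that $\infty$ lies in the closure of each open half-space in the topology of $\widehat{\H}$, so that $\partial P^{\pm}(t,a)$ is the full M\"obius sphere $P(a,t)$ and not just the linear hyperplane. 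The one hypothesis you use without comment is that $\mu$ extends to a homeomorphism of $\widehat{\H}$; this is fine here because reflections are involutions and any similitude factor occurring in a bijective composition must itself be bijective, but it is worth a sentence given the paper's own remark that linear isometries of an infinite-dimensional $\mathbb{H}$ need not be surjective.
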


Let $\{x_1,\cdots,x_n\}$ be a family of linearly independent vectors in $\H$ and $x \in\H$. An $n-${\it hyperplane} $P_n$  in $\H$  is a set of type:

$\Big\{x+\l_1x_1+\cdots+\l_nx_n, \;\l_i\in \R,\; i=1,\cdots, n \Big\}.$

A {\bf M\"{o}bius $n-$sphere} is an extended $n-$hyperplane  $P_n\cup\Big\{\infty\Big\}$ or a set of type $P_{n+1}\cap S(a,r)$ where $P_{n+1}$ is an $(n+1)-$hyperplane which contains $a$.

\begin{Pro}\cite{Be1}\label{nS}
For any M\"{o}bius transformation, the image of a M\"{o}bius $n-$sphere is a   M\"{o}bius $n-$sphere.
\end{Pro}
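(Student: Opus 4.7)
The plan is to reduce the claim to the generators of the Möbius group and then verify it directly for the inversion $\rho_0$ in the unit sphere. By definition every Möbius transformation is a finite composition of reflections and similitudes, and the property ``sends Möbius $n$-spheres to Möbius $n$-spheres'' is closed under composition of maps, so it suffices to check each generator.

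For a similitude $\sigma(x)=\alpha\omega(x)+a$ (with $\sigma(\infty)=\infty$), the map is an affine bijection of $\H$ which sends affine subspaces to affine subspaces of the same dimension and sends each sphere $S(b,r)$ to $S(\sigma(b),|\alpha|r)$. Hence $\sigma(P_n\cup\{\infty\})=\sigma(P_n)\cup\{\infty\}$ is again an extended $n$-hyperplane, and $\sigma(P_{n+1}\cap S(a,r))=\sigma(P_{n+1})\cap S(\sigma(a),|\alpha|r)$ with $\sigma(a)\in\sigma(P_{n+1})$. A reflection in an extended hyperplane is an isometry of $\H$ fixing $\infty$, hence a similitude, so it is covered by this case. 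For a reflection in a sphere $S(a,r)$, the Proposition preceding the statement writes it as $\tau\circ\rho_0\circ\tau^{-1}$, where $\tau$ is the similitude $x\mapsto rx+a$ carrying $S(0,1)$ to $S(a,r)$; combined with the similitude case, this reduces the whole problem to verifying the claim for $\rho_0(x)=x/|x|^2$.

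To analyze $\rho_0$, I would split according to the type of $\mathcal{S}$ and whether $0\in\mathcal{S}$ (equivalently $\infty\in\rho_0(\mathcal{S})$). If $\mathcal{S}=P_n\cup\{\infty\}$ with $0\in P_n$, then $P_n$ is a linear $n$-subspace, invariant under $x\mapsto x/|x|^2$, and since $\rho_0$ swaps $0$ and $\infty$ one has $\rho_0(\mathcal{S})=\mathcal{S}$. If $\mathcal{S}=P_n\cup\{\infty\}$ with $0\notin P_n$, write $P_n=p_0+V$ with $p_0\perp V$ (so $p_0$ is the point of $P_n$ closest to $0$); a direct computation yields $|\rho_0(x)-c|^2=1/(4|p_0|^2)$ for every $x\in P_n$, where $c:=p_0/(2|p_0|^2)$. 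Since $\rho_0(\infty)=0$ also lies on that sphere and $c$ belongs to $P_{n+1}:=\operatorname{span}(\{p_0\}\cup V)$, one obtains $\rho_0(\mathcal{S})\subseteq P_{n+1}\cap S(c,1/(2|p_0|))$, an $(n+1)$-hyperplane through the center. The two remaining cases, $\mathcal{S}=P_{n+1}\cap S(a,r)$ with $a\in P_{n+1}$, are handled analogously: the subcase $0\in\mathcal{S}$ (i.e.\ $|a|=r$ and $0\in P_{n+1}$) is the image of the previous case under the involution $\rho_0$, while the subcase $0\notin\mathcal{S}$ is a parallel direct computation producing an explicit target Möbius $n$-sphere.

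The main obstacle is upgrading the inclusion in the central computation to an equality: once one has $\rho_0(\mathcal{S})\subseteq P_{n+1}\cap S(c,1/(2|p_0|))$, one must show every point of the target Möbius $n$-sphere is actually attained. The cleanest way, which I would adopt, is to invoke $\rho_0\circ\rho_0=\textrm{Id}$ and apply the already-established computation in the opposite direction (starting from a point on the target sphere and mapping back into $P_n\cup\{\infty\}$), thereby converting each inclusion into an equality without further calculation and simultaneously pairing off the four cases into two involutive pairs.
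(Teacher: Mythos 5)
The paper never proves this statement: Proposition \ref{nS} is quoted from Benz \cite{Be1} with no argument supplied, so your proof is not competing with an internal one, and the route you take --- reduce to the generators, absorb hyperplane-reflections into the similitude case, conjugate a sphere-reflection to $\rho_0$ via $x\mapsto rx+a$, then compute for $\rho_0$ --- is the standard one. The key computation checks out: for $x=p_0+v$ with $p_0\perp v$ one indeed gets $|\rho_0(x)-c|^2=1/(4|p_0|^2)$ with $c=p_0/(2|p_0|^2)$, and $\rho_0(\infty)=0$ lies on that sphere with $c$ in the linear span. Two spots are thinner than the sketch suggests. First, the sentence ``the subcase $0\in\mathcal{S}$ is the image of the previous case under the involution'' is circular if read literally: from $\rho_0(\mathcal{S}_1)\subseteq\mathcal{S}_2$ and $\rho_0\circ\rho_0=\mathrm{Id}$ you only get $\mathcal{S}_1\subseteq\rho_0(\mathcal{S}_2)$, so each involutive pair still needs an honest inclusion computed in \emph{both} directions (plane into sphere \emph{and} sphere into plane) before $\rho_0\circ\rho_0=\mathrm{Id}$ upgrades the two inclusions to equalities; your closing paragraph does describe exactly this, so the plan is coherent, but as listed it looks as though case 3 comes for free. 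Second, the subcase $\mathcal{S}=P_{n+1}\cap S(a,r)$ with $0\notin P_{n+1}$ is not merely ``parallel'': there $\rho_0$ sends the extended affine subspace $P_{n+1}\cup\{\infty\}$ to a sphere through the origin inside $W=\mathrm{span}(P_{n+1}\cup\{0\})$, so $\rho_0(\mathcal{S})$ is an intersection of two spheres in $W$ and you must still exhibit it as $P'_{n+1}\cap S(a',r')$ with $a'\in P'_{n+1}$. A clean way to finish both points at once is to note that $W=\mathrm{span}(\mathcal{S}\cup\{0\})$ is a finite-dimensional subspace of dimension at most $n+2$ with $\rho_0(\widehat{W})=\widehat{W}$, and that the restriction of $\rho_0$ to $\widehat{W}$ is again the inversion in the unit sphere of $W$; inside $\widehat{W}$ your set has codimension at most two, so everything reduces to elementary codimension-one and codimension-two sphere geometry in a finite-dimensional space, where the codimension-one case is exactly part (1) of the Theorem from \cite{La} already quoted in the paper.
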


 {\it From now to the end  of this subsection}, we fix  the basis $\{e_i\}_{i\in I}$   in $\mathbb{H}$ and $\H$ is the hyperplane which is orthogonal to $e_1$. Each $x\in \mathbb{H}$ will be written $x=(x_1,\bar{x})$  with $ \bar{x} \in \H$. We denote by $\H^+=\{x\in {\mathbb{H}}\;: x_1>0\}$. Note that
$\H^+$ is one side of the M\"obius sphere   $\widehat{\H}$.

We denote by $\mathfrak{M}(\mathbb{H})$ the group of all M\"obius transformations of $\widehat{\mathbb{H}}$ such that $\mu(\H^+)=\H^+.$ Then, from Proposition \ref{side}, for $\mu\in \mathfrak{M}(\mathbb{H})$, we have $\mu(\widehat{\H})=(\widehat{\H})$.\\
The  converse is also true:

If $\mu$ is a reflection of $\widehat{\H}$ on ${P}(a,t)$,  let  $\tilde{\mu}$ be  the reflection  in $\widehat{ \mathbb{H}}$ on $\hat{P}((0,a),t)$  in $\widehat{ \mathbb{H}}.$

If $\mu$ is a reflection of $\widehat{\H}$ on $S(a,r)$, let  $\tilde{\mu}$ be the reflection  in $\widehat{ \mathbb{H}}$ on $\tilde{S}((0,a),r)$ in $\widehat{ \mathbb{H}}.$

 If $\mu=\a\o+a$ is a similitude of $\widehat{\H}$, let  $\tilde{\mu}=\a\tilde{\o}+(0,a)$ be the similitude in $\widehat{ \mathbb{H}}$ where $\tilde{\o}_{|\H}=\o$ and $\tilde{\o}(e_1)=e_1.$

In any case $\tilde{\mu}$ preserves $\H^+$ and $\H$. It follows that the group $\mathfrak{M}(\H)$ of M\"obius transformations of $\widehat{\H}$ is isomorphic to $\mathfrak{M}(\mathbb{H}).$

On the other hand, on $\mathbb{H}$, we consider the hyperbolic  distance $\d$ characterized by (cf \cite{Be1})
$$\cosh \d(x,y)=\sqrt{1+|x|^2}\sqrt{1+|y|^2}-<x,y> \textrm{ and } \d(x,y)\geq 0.$$

\begin{Def}\label{hyperbolic} A bijection $\phi$ of $\mathbb{H}$ is called a hyperbolic transformation if we have:
$$\forall\ x,y\in \mathbb{H};\ \  \d(\phi(x),\phi(y))=\d(x,y).$$
\end{Def}
Consider the diffeomorphism $h:\H^+\ap \mathbb{H}$ defined by
$$h(x_1,\bar{x})=(\dis\frac{ |x|^2-1}{2x_1},\dis\frac{\bar{x}}{x_1}).$$
The link between hyperbolic transformations and M\"obius transformations is given in the following result of \cite{Be2}.
\begin{The}\label{mobs}\cite{Be2}
\begin{enumerate}
\item   The group $\mathfrak{G}(\mathbb{H})$ of hyperbolic transformations of $\mathbb{H}$ is the set  $\{\phi=h\circ\mu\circ h^{-1} \;\;: \mu\in \mathfrak{M}(\mathbb{H})\}$.
\item Each map $\phi\in \mathfrak{G}(\mathbb{H})$ can be written as a similitude $\b$ or a product $\a\circ\rho_0\circ\b$ with $\a$ and $\b$ are of the form :\\
(i)  $\a(x)=k x +v \textrm{ with }  k>0,\; v\in \H $;\\
(ii) $\b(x)=k'\o(x)+v'\textrm{ with }  k'>0,\; v'\in \H,\; \o \textrm{ an  orthogonal transformation  of $\mathbb{H}$ such that } \o(v')=v'.$
\end{enumerate}
\end{The}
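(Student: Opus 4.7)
The strategy is to realise $h$ as an isometry between two classical models of hyperbolic space: the Poincar\'e upper half-space $\H^+$ equipped with $ds^2 = |dx|^2/x_1^2$, and the hyperboloid model whose induced geodesic distance is exactly the $\delta$ of the statement. Once this is established, part (1) is immediate, since conjugation by $h$ carries the isometry group of the first model onto that of the second; and the isometry group of $(\H^+, |dx|^2/x_1^2)$ is precisely $\mathfrak{M}(\mathbb{H})$ by the classical Beardon-type theorem cited in Subsection 2.1 (valid in the Hilbert setting through \cite{Be1}, \cite{Be2}), while the second isometry group is $\mathfrak{G}(\mathbb{H})$ by definition.

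For part (1) concretely, my plan is to verify the distance identity
\[
\cosh\delta(h(p), h(q)) \;=\; 1 + \frac{|p-q|^2}{2\,p_1\, q_1}, \qquad p,q\in\H^+,
\]
whose right-hand side is the standard closed form of the Poincar\'e distance on $\H^+$. Substituting the explicit formula $h(x_1,\bar x) = \bigl((|x|^2-1)/(2x_1),\bar x/x_1\bigr)$ on the left reduces the claim to a routine algebraic identity in $p_1, q_1, |\bar p|^2, |\bar q|^2, \langle\bar p,\bar q\rangle$, after which $\mathfrak{G}(\mathbb{H}) = h\,\mathfrak{M}(\mathbb{H})\,h^{-1}$ follows.

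For part (2), I would classify elements $\mu\in\mathfrak{M}(\mathbb{H})$ according to whether $\mu(\infty)=\infty$, and then conjugate through $h$. If $\mu(\infty)=\infty$ then $\mu$ is a similitude; the condition $\mu(\H^+)=\H^+$ forces $\mu(x_1,\bar x) = (k x_1, k \tilde\omega(\bar x) + a)$ with $k>0$, $\tilde\omega$ orthogonal on $\H$, and $a\in\H$. Otherwise $c:=\mu(\infty)\in\H$, and the map $\rho_0\circ\tau_{-c}\circ\mu$ fixes $\infty$ and is therefore a similitude $\sigma$ of the same admissible type, giving the factorisation $\mu = \tau_c\circ\rho_0\circ\sigma$. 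A short direct computation shows $h\circ\rho_0\circ h^{-1}(y) = (-y_1,\bar y)$, a linear reflection of $\mathbb{H}$, so conjugating each factorisation term by term through $h$ produces the two alternatives $\phi=\beta$ or $\phi=\alpha\circ\rho_0\circ\beta$ of the claimed types. The constraint $\omega(v')=v'$ in (ii) emerges as the residual freedom to absorb into $\sigma$ any rotation fixing its translation vector.

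The main obstacle is the second case of part (2): not merely computing the conjugate of $\tau_c\circ\rho_0\circ\sigma$, but matching the outcome to a product in the precise normal form $\alpha\circ\rho_0\circ\beta$ with $\alpha,\beta$ of the prescribed shapes (i), (ii), and tracking how the eigenvector condition $\omega(v')=v'$ arises. The algebra is manageable thanks to the quadratic identities $1/x_1 = \sqrt{1+|y|^2}-y_1$ and $x_1^2(1+|\bar y|^2) = x_1(y_1+\sqrt{1+|y|^2})$ that come from inverting $h$, but keeping the final expression in the claimed canonical shape requires careful bookkeeping.
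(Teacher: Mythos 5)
The paper contains no proof of this theorem: it is quoted from Benz \cite{Be2} and used as an imported black box, so there is no internal argument to compare you against; I can only judge your proposal on its own terms. Your part (1) is half right. The distance identity $\cosh\delta(h(p),h(q))=1+\frac{|p-q|^2}{2p_1q_1}$ is correct (one finds $\sqrt{1+|h(x)|^2}=\frac{|x|^2+1}{2x_1}$ and the cross terms cancel exactly as you predict), and it does show that $h$ is an isometry from the Poincar\'e half-space onto $(\mathbb{H},\delta)$ --- this is precisely Remark \ref{restrict}(1). But after that step, part (1) has been reduced to the assertion that every \emph{bijective isometry} of the half-space model is the restriction of an element of $\mathfrak{M}(\mathbb{H})$, and that assertion is not among the results recalled in Subsection 2.1; up to conjugation by $h$ it \emph{is} the statement to be proved. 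In infinite dimensions it cannot be dispatched by the usual Liouville/conformality route and is exactly the content of Benz's classification, so your ``proof'' of (1) amounts to the same citation the paper itself makes.

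In part (2) the case split on $\mu(\infty)$, the factorisation $\mu=\tau_c\circ\rho_0\circ\sigma$ (legitimate inside $\mathfrak{M}(\mathbb{H})$, since $c\in\H$ and $\rho_0$ both preserve $\H^+$), and the identity $h\circ\rho_0\circ h^{-1}(y)=(-y_1,\bar y)$ all check out. The genuine gap is the normalisation into the constrained forms (i) and (ii), which you defer to ``careful bookkeeping.'' It is not bookkeeping. In your first case the similitude you obtain is $x\mapsto k\,\omega(x)+a$ with $\omega(e_1)=e_1$ and $a\in\H$, and nothing forces $\omega(a)=a$; since the scale, the orthogonal part and the translation part of a similitude are uniquely determined by the map, an element with $\omega(a)\neq a$ simply is not of form (ii), and, because it fixes $\infty$ while any $\alpha\circ\rho_0\circ\beta$ of the stated shape sends $\beta^{-1}(\infty)$ to $\alpha(0)=v\neq\infty$, it cannot be pushed into the product case either. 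So either the eigenvector constraint is meant to bind only in the product case, or extra factors (a rotation conjugated across $\rho_0$, or a boost along the axis of $\omega$ in the spirit of Proposition \ref{decompA} and Corollary \ref{lordecompo}, which is where $\omega(v')=v'$ really originates) must be inserted; your proposal does not engage with this at all. Your closing remark that the condition ``emerges as the residual freedom to absorb any rotation fixing its translation vector'' points the wrong way: the difficulty is the rotations that do \emph{not} fix it, and you supply no mechanism for eliminating them. That normalisation is where the proof of (2) actually lives, and it is missing.
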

{\bf From now on, we identify  the groups $\mathfrak{M}(\mathbb{H})$ and $\mathfrak{G}(\mathbb{H})$}.\\
\begin{Rem}\label{restrict}${}$
\begin{enumerate}
\item According to \cite{Be2}, the pair $(\H^+,\mathfrak{M}(\mathbb{H}))$ is called the Poincar\'e model of hyperbolic geometry. In fact, let $g_{\H^+}=\frac{1}{x_1}g$ be the conformal metric to the canonical Riemannian metric where $g$ is induced by the inner product $<\;,\;>$ on $\mathbb{H}$. Then the map $h:(\H^+, g_{\H^+})\ap (\mathbb{H},\d)$ is an isometry.
\item If $\mathbb{H}$ is finite dimensional, each isometry is a bijection, but it is no longer true in general, if $\mathbb{H}$ is infinite dimensional (see \cite{Be1}).
\end{enumerate}
\end{Rem}

\subsection{ M\"{o}bius transformations  and the Lorentz group }\label{Alorentz}${}$\\
In this subsection, we consider $\mathcal{H}= \R\oplus  \mathbb{H}$,  the basis $\{e_i\}_{i\in I}$ is fixed and again $\H$ is the orthogonal of $e_1$ in $\mathbb{H}$.  We put on $\cal H$ the  following Lorentz product:
$$<(s,x),(t,y)>_L=<{x},{y}>-st.$$
We then denote by  $|\;\;|_L$ the associated pseudo-norm and by ${\cal K}$ the {\it light cone} {\it  i.e.} \\ ${\cal K}=\{u=(s,x)\in {\cal H}\;: <u,u>_L=0\},$
and ${\cal K}^+=\{u=(s,x)\in {\cal K}\;: s>0\}.$

\begin{Def} \label{Lorentz} A bijection $\l$ of  ${\cal H}$ is called a Lorentz transformation if we have
$$\forall u,v\in\mathcal{H},\;|\l(u)-\l(v)|_L=|u-v|_L. \; $$
\end{Def}
On the other hand, we  consider
 the hyperboloid ${\cal H}_1=\{ u=(s,{x})\in {\cal H}\;: |u|_L^2=-1,\; \}$
and its "positive time like sheet"
${\cal H}_1^+=\{ u=(s,{x})\in {\cal H}_1\;: s>0\; \}$. Let $g: \mathbb{H}\rightarrow {\cal H}_1^+$  be a bijection defined by :$g(x)=(\sqrt{1+|{x}|^2}, x).$\\

The link between the Lorentz  transformations of $\cal H$ and  the hyperbolic transformations of $\mathbb{H}$ is given by the following  result (cf \cite{Be1}).

\begin{The}\label{hyplor} ${}$\\
 Given any hyperbolic transformation $\phi$, there exists a unique Lorentz transformation $\l=\t(\phi)$ such that
$$\l(0)=0\;,\; \l({\cal H}_1^+)={\cal H_1^+}\; and \;\;\forall x\in \mathbb{H},\; g(\phi(x))=\l(g(x)).$$
Moreover the restriction  to ${\cal H}_1^+$ of  the  Lorentz transformation $\t(\phi)$ associated to $\phi$ is given by:
$$\t(\phi)_{| {\cal H}_1^+}=g\circ \phi\circ g^{-1}.$$
\end{The}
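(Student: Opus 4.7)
My plan has two main parts: constructing the Lorentz transformation $\tau(\phi)$ explicitly from $\phi$, and then proving uniqueness. The starting point is the fundamental identity
$\langle g(x), g(y)\rangle_L = \langle x, y\rangle - \sqrt{(1+|x|^2)(1+|y|^2)} = -\cosh\delta(x,y)$,
obtained by a direct computation using the definitions of $g$ and of the Lorentz product together with the formula for $\cosh\delta$ recalled just before Definition \ref{hyperbolic}. In particular $\langle g(x), g(x)\rangle_L = -1$, confirming that $g$ takes values in $\mathcal{H}_1^+$. The bijection $\lambda_0 := g \circ \phi \circ g^{-1} : \mathcal{H}_1^+ \to \mathcal{H}_1^+$ then preserves the Lorentz inner product between pairs of points of $\mathcal{H}_1^+$, simply because $\phi$ is a $\delta$-isometry.

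I next extend $\lambda_0$ to a bounded $\R$-linear map $\lambda$ on $\mathcal{H}$. Using the Lorentz-orthogonal splitting $\mathcal{H} = \R p_0 \oplus p_0^\perp$, where $p_0 := g(0) = (1,0)$ is timelike and $p_0^\perp = \{0\} \oplus \mathbb{H}$ is spacelike, I set $\lambda(p_0) := \lambda_0(p_0) = g(\phi(0))$ and, for $v = (0, w) \in p_0^\perp$, define $\lambda(v)$ to be the tangent vector at $s=0$ of the curve $s \mapsto \lambda_0(g(sw))$ on $\mathcal{H}_1^+$ (the required smoothness of $\phi$ is ensured by the explicit decomposition in Theorem \ref{mobs}). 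Differentiating the identity $\langle \lambda_0(g(sw_1)), \lambda_0(g(tw_2))\rangle_L = \langle sw_1, tw_2\rangle - \sqrt{(1+s^2|w_1|^2)(1+t^2|w_2|^2)}$ appropriately at $s = t = 0$ yields, in a single stroke, the three bilinear identities $\langle \lambda(p_0), \lambda(p_0)\rangle_L = -1$, $\langle \lambda(p_0), \lambda(v)\rangle_L = 0$, and $\langle \lambda(v_1), \lambda(v_2)\rangle_L = \langle w_1, w_2\rangle$. By $\R$-bilinearity, the linear extension $\lambda$ preserves the entire Lorentz product. Repeating the construction with $\phi^{-1}$ produces its inverse, so $\lambda$ is bijective; by construction $\lambda(0) = 0$ and $\lambda(\mathcal{H}_1^+) = \mathcal{H}_1^+$. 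Setting $\tau(\phi) := \lambda$ gives existence.

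For uniqueness, let $\lambda_1, \lambda_2$ be two Lorentz transformations satisfying the stated conclusion. Their composition $\mu := \lambda_2 \circ \lambda_1^{-1}$ is Lorentz, fixes $0$, and fixes every point of $\mathcal{H}_1^+$. Polarizing $|u-v|_L^2 = |u|_L^2 + |v|_L^2 - 2\langle u, v\rangle_L$ shows that a bijective Lorentz map fixing $0$ preserves $\langle \cdot, \cdot\rangle_L$; then a short argument using non-degeneracy of the form and surjectivity of $\mu$ (compute $\langle \mu(\alpha u + \beta v) - \alpha\mu(u) - \beta\mu(v), \mu(z)\rangle_L$ and let $z$ vary) establishes $\R$-linearity of $\mu$. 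Since $\mu$ fixes $\mathcal{H}_1^+$, whose $\R$-linear span is all of $\mathcal{H}$ (any $(t, y)$ decomposes as $t(1,0)$ plus a half-difference of two points of $\mathcal{H}_1^+$), we conclude $\mu = \mathrm{id}$, hence $\lambda_1 = \lambda_2$.

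The delicate step is the extension of $\lambda_0$: one must verify that the tangent-space formula on $p_0^\perp$ really produces a bounded linear map whose global Lorentz preservation follows from identities read off at a single base point. This works cleanly because the right-hand side of the key identity is a low-order $\R$-analytic function of $s$ and $t$ near the origin, and its constant, mixed first-order, and mixed second-order coefficients at $(0,0)$ encode exactly the three identities needed. Should Hilbert-level smoothness of $\phi$ be delicate to justify, a safer fallback is to invoke the decomposition in Theorem \ref{mobs} and prescribe $\tau$ directly on the generators, lifting orthogonal maps, dilations, translations, and $\rho_0$ to explicit Lorentz operators, then checking that the lift is consistent on products.
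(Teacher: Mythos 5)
The paper does not prove this theorem: it is quoted from \cite{Be1} (Benz) with no argument supplied, so there is no in-paper proof to compare against; your attempt has to stand on its own. Your uniqueness half does stand: deriving preservation of $\langle\cdot,\cdot\rangle_L$ by polarization from the fixed point $0$, extracting $\R$-linearity from non-degeneracy and surjectivity, and observing that $\mathcal{H}_1^+$ spans $\mathcal{H}$ is complete and correct. The preliminary identity $\langle g(x),g(y)\rangle_L=-\cosh\delta(x,y)$ and the three first-order identities you read off at $s=t=0$ are also correct.

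The existence half, however, has a genuine gap: you never verify the defining relation $g(\phi(x))=\lambda(g(x))$ for all $x\in\mathbb{H}$, which is the actual content of the theorem. Your construction pins down a bounded linear, Lorentz-form-preserving $\lambda$ that agrees with $\lambda_0=g\circ\phi\circ g^{-1}$ only in its value and its differential at the single point $p_0=g(0)$; nothing in the write-up shows that this linear map coincides with the a priori nonlinear $\lambda_0$ at any other point of $\mathcal{H}_1^+$. That coincidence is exactly the nontrivial assertion (that an isometry of the hyperboloid is the restriction of a linear map), and it must be argued — for instance by differentiating the pairing identity $\langle\lambda_0(p),\lambda_0(g(tw))\rangle_L=\langle p,g(tw)\rangle_L$ in $t$ at $t=0$ for a fixed $p$, which gives $\langle\lambda_0(p)-\lambda(p),\lambda(u)\rangle_L=0$ for all $u$, or by the rigidity of isometries of the connected manifold $\mathcal{H}_1^+$ along geodesics through $p_0$. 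Both routes still require surjectivity of $\lambda$ (to invoke non-degeneracy, or to see that $\lambda$ maps $\mathcal{H}_1^+$ onto itself), and your surjectivity argument — "repeating the construction with $\phi^{-1}$ produces its inverse" — is circular: to check that the map built from $\phi^{-1}$ inverts $\lambda$ you already need to know that each agrees with the corresponding $\lambda_0$ globally on the hyperboloid, not just at $p_0$. This is not a removable formality here: the paper itself warns in Remark \ref{restrict} that in infinite dimension isometries need not be bijections. Your proposed fallback (lifting the generators of Theorem \ref{mobs} to explicit Lorentz operators and using uniqueness to glue) would close the gap, but it is only sketched, not carried out.
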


According to this result, the Lorentz transformation  of type  $\l=\t(\phi)$, where $\phi$ is a hyperbolic transformation,  is then a continuous linear map which is called an {\it orthochronous Lorentz linear map}. \\

The set $SO(\mathbb{H},1)$ of linear Lorentz transformations $\l$ such that $\l({\cal K}^+)={\cal K}^+$ is a subgroup of the group $O(\mathbb{ H},1)$   of all linear Lorentz transformations and the set $SO_0(\mathbb{H},1)$ of orthochronous Lorentz linear maps is a subgroup of  $SO(\mathbb{H},1)$. Moreover, according to Theorem \ref{mobs}, Remark  \ref{restrict} and Theorem \ref{hyplor} we have a natural isomorphism $\mathcal{L}$ from the group of M\"{o}bius transformations $\mathfrak{M}(\mathbb{H})$ and the  group $SO_0(\mathbb{H},1)$. More precisely we have:
$${\cal L}( \l)= (g\circ h)^{-1}\circ \l_{| {\cal H}_1^+}\circ (g\circ h).$$

In fact,  as ${\cal H}_1^+$ is the  set $\{(s,x)\in {\cal H} \textrm{ such that } s=\sqrt{1+|{x}|^2}\}$ and so ${\cal H}_1^+$ is a smooth hypersurface.
In the restriction to ${\cal H}_1^+$ we have $<(s,x),(t,y)>_L=<x,y>-\sqrt{1+|x|^2}\sqrt{1+|y|^2}$. Therefore, in the restriction to ${\cal H}_1^+$  $\cosh \d(s,x),(t,y)=-<(s,x),(t,y)>_L$ defines a hyperbolic distance and the map $g(x)=(\sqrt{1+|{x}|^2}, x)$  is a diffeomorphism from $\mathbb{H}$ to ${\cal H}_1^+$ which is an isometry. According to Theorem \ref{mobs} and Theorem  \ref{hyplor} we get an natural identification of the group $\mathfrak{M}(\mathbb{H})$ and the group of the restriction to ${\cal H}_1^+$  of elements of  $SO_0(\mathbb{H},1)$.\\

\bigskip

{\it We end this subsection by recalling  a characterization of the group $SO(\mathbb{H},1)$ and its Lie algebra  (cf \cite{Ga} or  \cite{La}). We adopt the presentation of \cite{La}.}\\

According to the decomposition  ${\cal H}=\R\oplus\mathbb{H},$ let $p_1$ be (resp. $p_2$) the natural projection of $\cal H$ onto $\R$ (resp. $\mathbb{H}$). It follows that  each continuous linear map $A$ of $\cal H$ in a obvious  matrix form
\begin{eqnarray}\label{Adecompmat}
\begin{pmatrix} c&[v]^*\\
[u]&B\\
\end{pmatrix}
\end{eqnarray}
 where $c=p_1(A(1,0))$, $\;B=p_2\circ A_{| \mathbb{H}}$ and $u$ (resp $v$) is an element of $\mathbb{H}$ such that  $p_2\circ A(1,0)=u$ and $[u](s)=su$ (resp. $p_1\circ A(0,x)=<v,x> $ and $[v]^*(x)=<v,x>$).\\

Now, let  $J$ be  the continuous endomorphism of $\cal H$ defined by $J(s,x)=(-s,x)$. Given a continuous endomorphism $A$ of $\cal H$, the pseudo-adjoint $A^\#$ is the continuous endomorphism characterized by
$$<Au,v>_L=<u,A^{\#}v>_L \textrm{ for any } u,v\in{ \cal H}.$$
Thus, $A$ belongs to $O(\mathbb{H},1)$ (resp. $SO(\mathbb{H},1)$)  if and only if
$A^{\#}A=Id$  (resp. $A^{\#}A=Id$ and $A^{\#}\in SO(\mathbb{H},1)$). \\

According to the matrix form (\ref{Adecompmat}) , $A^\#$ has a matrix form of type

\begin{eqnarray}\label{pad}
\begin{pmatrix} c&-[u]^*\\
-[v ]&B^*\\
\end{pmatrix}
\end{eqnarray}
where $B^*$ is the adjoint endomorphism (of $\mathbb{H}$)  of $B$.
Then $A$ belongs to $O(\mathbb{H},1)$ if and only if
 \begin{eqnarray}\label{cractOH1}
\begin{pmatrix} c&-[u]^*\\
-[v]&B^*\\
\end{pmatrix}\begin{pmatrix} c&[v]^*\\
[u]&B\\
\end{pmatrix}=\begin{pmatrix} -1&0\\
0&Id\\
\end{pmatrix}
\end{eqnarray}
where $Id$ is the identity in $\mathbb{H}$. Moreover, $A\in  O(\mathbb{H},1)$ belongs to $SO(\mathbb{H},1)$ if and only if $c>0$ (see \cite{La}).

 The following result is   classical  in the finite dimensional case  and in the infinite dimensional  case it is more or less included in \cite{Be1} or \cite{La}
\begin{Pro}\label{decompA}${}$\\
 Let $A\in  O(\mathbb{H},1)$, there exists, $v\in\mathbb{H}$ with $v\not=0$,   such that  $A$ has the  following decomposition:
\begin{eqnarray}
A=PT
\end{eqnarray}
where $P=\begin{pmatrix} \varepsilon&0\\
0&Q\\
\end{pmatrix} $ and $Q^{-1}=Q^*$, $\; \varepsilon=\pm 1$ and  $T$ is such that:

 if $\H_v$ is the orthogonal of $\R.v$ in $\mathbb{H}$ then $T_{| \H_v}=Id_{\H_v}$ and  $T(\R\oplus \R.v)=\R\oplus \R.v$. Moreover, there exists $\a\geq 0$ such that the eigenvalues of  $T_{|\R\oplus \R.v}$ are $e^{\a}$ and $e^{-\a}$ with associated eigenvectors $(\frac{v}{|v|},1)$ and $(\frac{v}{|v|},-1)$ respectively.

\end{Pro}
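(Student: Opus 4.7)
The plan is to realize the decomposition as an explicit Lorentz polar decomposition, essentially the matrix-level incarnation of the $KAK$ (Cartan) decomposition of $O(\mathbb{H},1)$. First I would put $A$ in the matrix form (\ref{Adecompmat}),
$$A = \begin{pmatrix} c & [v']^* \\ [u] & B \end{pmatrix},$$
writing $v'$ for the inner-matrix vector to avoid collision with the $v$ of the statement, and then extract from $A^\# A = Id = AA^\#$ the four identities
$$c^2 - |u|^2 = 1, \qquad |v'| = |u|, \qquad B v' = c u, \qquad B^* u = c v'.$$
In particular $|c|\geq 1$. In the degenerate case $u=0$, these force $v'=0$ and $c=\pm 1$, so $A$ is already block-diagonal and I would take $T = Id$, $\a = 0$, and $v$ any nonzero vector of $\mathbb{H}$.

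Assuming from now on $u \neq 0$, I would define the data of the decomposition directly. Set $\varepsilon = \mathrm{sign}(c)$, take the boost direction to be $v := \varepsilon v' \in \mathbb{H}\setminus\{0\}$, and choose $\a\geq 0$ by $\cosh\a = |c|$ and $\sinh\a = |u|$. Let $T$ be the unique element of $O(\mathbb{H},1)$ fixing $\H_v$ pointwise and acting on $\R \oplus \R v$, in the basis $\{(1,0),(0,v/|v|)\}$, by
$$T|_{\R\oplus\R v} \;=\; \begin{pmatrix} \cosh\a & \sinh\a \\ \sinh\a & \cosh\a \end{pmatrix}.$$
Its eigenvalues are $e^{\pm\a}$, with null eigenvectors $(1,\pm v/|v|)$, exactly the ones demanded by the statement.

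The heart of the argument is then to check that $P := A T^{-1} \in O(\mathbb{H},1)$ has the required block-diagonal form $\mathrm{diag}(\varepsilon, Q)$. Because any element of $O(\mathbb{H},1)$ that preserves the line $\R e_0$ automatically preserves its Lorentz-orthogonal complement $\mathbb{H}$ and restricts to an isometry of the Hilbert inner product, it suffices to verify the single identity $P e_0 = \varepsilon e_0$. Using $T^{-1} e_0 = (\cosh\a,-\sinh\a\cdot v/|v|) = (|c|,-\varepsilon v')$, a direct calculation with the four identities above gives
$$P e_0 = |c|(c,u) - \varepsilon(|v'|^2, B v') = \bigl(\varepsilon(c^2 - |u|^2),\;(|c| - \varepsilon c)u\bigr) = (\varepsilon,0),$$
since $|c|c = \varepsilon c^2$, $|c| = \varepsilon c$, and $c^2 - |u|^2 = 1$. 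Hence $Q := P|_\mathbb{H}$ is orthogonal, i.e.\ $Q^{-1}=Q^*$, and the decomposition $A = PT$ follows.

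The only delicate point is the sign bookkeeping: the choice $v = \mathrm{sign}(c) v'$ (rather than $v'$ itself) is exactly what makes the two contributions to $P e_0$ cancel, via $B v' = c u$ and $|v'| = |u|$, while keeping $\a\geq 0$. Beyond this, the proof is purely algebraic and goes through verbatim in infinite dimensions: it uses only the continuous Hilbert-space adjoint $B^*$ and never any compactness or spectral-theoretic input, so nothing has to be proved about $SO(\mathbb{H},1)$ that is not already visible from the matrix identities satisfied by the four blocks of $A$.
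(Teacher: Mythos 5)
Your proof is correct, but it takes a different route from the paper's. Both arguments start from the same block identities extracted from $A^{\#}A=AA^{\#}=Id$ (namely $c^2-|u|^2=1$, $|v'|=|u|$, $Bv'=cu$, $B^*u=cv'$). From there the paper applies the polar decomposition theorem to the lower-right block, writing $B=QS$ with $S=\sqrt{Id_{\mathbb{H}}+[v][v]^*}$ self-adjoint positive, reads off $Q(v)=u$ from $Bv=cu$, declares $T=\begin{pmatrix} c&[v]^*\\ [v]&S\end{pmatrix}$, and then cites Gallier's Proposition 2.4 for the statement about $T_{|\R\oplus\R v}$ and its eigenvalues. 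You instead build the boost $T$ first and explicitly, from the parameters $\cosh\a=|c|$, $\sinh\a=|u|$ and the direction $v=\mathrm{sign}(c)\,v'$, verify the eigenvalue/eigenvector claim by inspection of the $2\times 2$ hyperbolic rotation, and obtain $P=AT^{-1}$ by the single computation $Pe_0=\varepsilon e_0$ together with the (correct) observation that a Lorentz bijection fixing $\R e_0$ must preserve $\mathbb{H}=e_0^{\perp_L}$ and restrict there to a Hilbert isometry. What your version buys is self-containedness: it avoids both the polar decomposition theorem and the external reference for the eigenvalue analysis, and it handles the sign of $c$ uniformly rather than by the paper's "analogue argument"; of course the two $T$'s coincide, since your boost in block form is exactly $\begin{pmatrix} |c|&[v]^*\\ [v]&\sqrt{Id_{\mathbb{H}}+[v][v]^*}\end{pmatrix}$. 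You also treat the degenerate case $u=0$ explicitly (where the paper's assertion ``in particular $u\not=0$'' is actually only valid when $v\not=0$), which is a small point in your favour.
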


Note that, in the previous decomposition, $T$ is called a {\it Lorentz boost} and it is characterized by $u\in \mathbb{H}$ and $\a>0$    so it  will denoted by  $B_{u,\a}$. Moreover according to Theorem \ref{hyplor}, $T$ is associated to a {\it hyperbolic translation} generated by $v$. (cf \cite{Be1}). Note that if  $\{u_i\}_{i\in I, i>1}$ is an orthonormal basis of $\H_v$, let  $Q$ be the linear isometry  in $\mathbb{H}$ such that $Q(e_1)=\dis\frac{v}{|v|}$ and $Q(e_i)=u_i, i\in I, i>1$. Then we have:

\begin{eqnarray}\label{decompoT}
B_{u,\a}=\begin{pmatrix} 1&0\\
0&Q\\
\end{pmatrix}\begin{pmatrix} \cosh \a&\sinh \a&0\\
\sinh \a& \cosh \a&0\\
0&0&Id_{\H_v}\\
\end{pmatrix}\begin{pmatrix} 1&0\\
0&Q^*\\
\end{pmatrix}.
\end{eqnarray}

Thus  we get the following corollary  (see also \cite{Be1}):\\
\begin{Cor}\label{lordecompo}${}$\\
For any  $A\in  O(\mathbb{H},1)$ there exists $Q$ and $Q'$ in $SO(\mathbb{H})$ and $\a>0$ such that
$$A= \begin{pmatrix} \varepsilon&0\\
0&Q'\\
\end{pmatrix}\begin{pmatrix} \cosh \a&\sinh \a&0\\
\sinh \a& \cosh \a&0\\
0&0&Id_{\H}\\
\end{pmatrix}\begin{pmatrix} 1&0\\
0&Q^*\\
\end{pmatrix}.$$

\end{Cor}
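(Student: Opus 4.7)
The plan is to obtain Corollary \ref{lordecompo} as a direct consequence of Proposition \ref{decompA} combined with the explicit factorization (\ref{decompoT}) of a Lorentz boost; essentially no new content is needed beyond carefully multiplying the matrices and verifying that the outer orthogonal factors can be regrouped.

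First I would apply Proposition \ref{decompA} to write $A = PT$ with $P = \begin{pmatrix}\varepsilon & 0 \\ 0 & Q_0\end{pmatrix}$, where $Q_0^{-1} = Q_0^*$, and $T = B_{u,\alpha}$ is a Lorentz boost characterized by some $v \in \mathbb{H}\setminus\{0\}$ and some $\alpha \geq 0$. Next I would substitute the explicit factorization (\ref{decompoT}) of $B_{u,\alpha}$, which uses the isometry $Q_1 \in O(\mathbb{H})$ sending $e_1$ to $v/|v|$ and a chosen orthonormal basis $\{u_i\}_{i>1}$ of $\H_v$ to the remaining basis vectors. This gives
\begin{equation*}
A = \begin{pmatrix}\varepsilon & 0 \\ 0 & Q_0\end{pmatrix}\begin{pmatrix}1 & 0 \\ 0 & Q_1\end{pmatrix}\begin{pmatrix}\cosh\alpha & \sinh\alpha & 0 \\ \sinh\alpha & \cosh\alpha & 0 \\ 0 & 0 & Id_{\H}\end{pmatrix}\begin{pmatrix}1 & 0 \\ 0 & Q_1^*\end{pmatrix}.
\end{equation*}

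I would then collapse the two leftmost block-diagonal factors into a single one by setting $Q' := Q_0 Q_1$, noting that the product of two elements of the orthogonal group is again orthogonal, and identify $Q := Q_1$ so that the rightmost factor takes the form $\begin{pmatrix}1 & 0 \\ 0 & Q^*\end{pmatrix}$ required by the statement. The resulting expression matches the corollary verbatim.

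The only substantive point I would have to address carefully is the meaning of $SO(\mathbb{H})$ in the infinite-dimensional setting and why $Q$ and $Q'$ indeed lie in it: in the finite-dimensional case one can, if necessary, replace $Q_1$ by a sign-adjusted variant (for example by reordering two basis vectors of $\H_v$) to arrange $\det Q_1 = 1$, which then forces $\det Q' = \varepsilon \det Q_0$ into the correct component; in infinite dimensions the notation $SO(\mathbb{H})$ is to be read as the orthogonal group (there being no canonical determinant), and then the claim is immediate. Apart from this bookkeeping about orientation, no genuine obstacle arises: the corollary is a reformatting of Proposition \ref{decompA} using the canonical form (\ref{decompoT}).
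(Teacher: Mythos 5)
Your proposal is correct and follows essentially the same route as the paper, which presents the corollary as an immediate consequence of Proposition \ref{decompA} together with the boost factorization (\ref{decompoT}), absorbing the two leftmost block-diagonal orthogonal factors into a single one. Your additional remark on the meaning of $SO(\mathbb{H})$ and the orientation bookkeeping is a sensible clarification of a point the paper leaves implicit, but it does not change the argument.
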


\begin{Rem} \label{hyptrans} According  to \cite{Be1} and our identifications, any boost  is a hyperbolic translation. Moreover, as in the finite dimension, in the metric space $({\H^+}, g_{\H^+})$ (cf Remark \ref{restrict} (1)), any boost $B_{e_1,\a}$ corresponds to the homothety $x\ap e^\a.x$ in $\H^+$ and so to the M\"obius transformation $x\ap e^\a.x$ in $\widehat{\H}$.
\end{Rem}
\bigskip
The proof of Proposition \ref{decompA} is an adaptation to our context of comparable result of  the finite dimensional case in \cite{Ga}

\begin{proof}  According to (\ref{Adecompmat}),  (\ref{pad}) and (\ref{cractOH1}), we get:
$$ B^*B=Id_\mathbb{H}+[v][v]^*\;\;\; [u]^*[u]=c^2-1\;\;\; [u]^*B=c[v]^*\;\;\;B^*u=cv$$
and also
$$ BB^*=Id_\mathbb{H}+[u][u]^*\;\;\; [v]^*[v]=c^2-1\;\;\;[v]^*B=c[u]^*\;\;\;Bv=cu.$$

On one hand, we get  as $ [v]^*[v]=|v|^2$ so $c^2=1+|v|^2$ and $c^2=1+|u|^2$ in particular $u\not=0$.
On the other hand the kernel of $[v][v]^*$ is the orthogonal $\H_v$ of $\R.v$ in $\mathbb{H}$. It follows that  the restriction of $[v][v]^*$  to $\H_v$ is zero and the restriction $[v][v]^*$ to $\R.v$ is such that $[v][v]^*(v)=|v|^2.v=(c^2-1)v$.
We deduce that $(Id_\mathbb{H}+[v][v]^*)_{| \H_v}=Id_{\H_v}$ and $v$ is an eigenvector of $Id_\mathbb{H}+[v][v]^*$ with eigenvalue $c^2$ of multiplicity $1$. From the polar decomposition theorem in Hilbert space, there exists a linear isometry $Q$ of $\mathbb{H}$ and a self-adjoint positive definite operator $S$ (on $\mathbb{H}$ ) such that $B=QS$. Moreover, we have $B^*B=S^2$ and so, $S_{| \H_v}=Id_{\H_v}$ and $S(v)=\pm c v$.  We may assume that this eigenvalue  $c$ is positive after changing
eventually $c$ into $-c$. Therefore we have $S(v)=cv.$\\

Assume at first that $c>0$. Since  $Bv=cu$, then $QS(v)=cQ(v)=cu$ and  so $Q(v)=u$.  We get
\begin{eqnarray}\label{decompocanonA}
\begin{pmatrix} c&[v]^*\\
[u]&B\
\end{pmatrix}=\begin{pmatrix} c&[v]^*\\
Qv&QS\\\end{pmatrix}=\begin{pmatrix} \varepsilon&0\\
0&Q\\
\end{pmatrix}\begin{pmatrix} c&[v]^*\\
[v]&S\\
\end{pmatrix}
\end{eqnarray}
with $c=\sqrt{|v|^2+1}$ and $\varepsilon=1$. We set $T=\begin{pmatrix} c&[v]^*\\
[v]&S\\
\end{pmatrix}$.\\

If $c<0$ by an analogue argument we get a decomposition as  (\ref{decompocanonA}) but with $\varepsilon=-1$.\\

Now, the restriction of $T$ to $\H_v$ is $Id_{\H_v}$  and, (in $\cal H$), $T(\R\oplus \R.v)=\R\oplus \R.v$. By similar arguments used in the proof of Proposition 2.4 of \cite{Ga} we complete the proof.

\end{proof}
\bigskip
 {\it In the sequence we denote by $\sqrt{Id_{\mathbb{H}}+[v][v]^*}$ the operator $S$ and so we have}
  \begin{eqnarray}\label{notT}
T=\begin{pmatrix} c&[v]^*\\
[v]&\sqrt{Id_\mathbb{H}+[v][v]^*}\\
\end{pmatrix} \textrm{ and } A=\begin{pmatrix} \varepsilon&0\\
0&Q\\
\end{pmatrix}  \begin{pmatrix} c&[v]^*\\
[v]&\sqrt{Id_\mathbb{H}+[v][v]^*}\\
\end{pmatrix}.
\end{eqnarray}

\bigskip

 Assume now that $I=\{1,\cdots,n\}$. According to  Proposition \ref{decompA} (see \cite{Ga})  any matrix $A\in O(n,1)$ can be written as a
  product of matrices of the form
 $$\begin{pmatrix}\varepsilon&0\\
 			0&Q\\
			\end{pmatrix}\begin{pmatrix}c&[v]^*\\
									[v]&\sqrt{Id_n+[v ].[v]^*}\\
									\end{pmatrix}$$
									
\noindent where $Q$ belongs to $O(n)$, $[v]$ is a vector column of $\mathbb{H}$   and $c=\sqrt{|v|^2+1}$ and $\varepsilon=\pm 1$.\\

Thus, the Lie group $O(n,1)$ has  $4$ connected components, according to the previous decomposition, we have  $\det Q=\pm 1$ and $\varepsilon=\pm1$. The group of Lorentz transformations is $SO(n,1)$ which is the group  corresponding to $\det Q=\varepsilon=\pm1$. According to the previous Proposition and   Theorem \ref{mobs},    the group $\mathfrak{ M}(\H)$  is isomorphic  to $SO(n,1)$, and so the group  $\mathfrak{ M}^+ (\H)$ which preserves the orientation is isomorphic to the connected components of the Identity in $SO(n,1)$, that is the subgroup $SO_0(n,1)$ corresponding to the case $\det Q=\varepsilon=1$.  \\

On the other hand  (see \cite{Ga} for instance), the Lie algebra $\mathfrak{so}(n,1)$ of $SO_0(n,1)$ is the set of matrices of the form
$$\begin{pmatrix}0&[u]^* \\
			[u]&B\\
			\end{pmatrix}$$
\noindent where $B$ is a square matrix of dimension $n$ such that $B^*=-B$. Therefore we have a natural decomposition
$$ \mathfrak{so}(n,1)=\mathfrak{h}_n\oplus \mathfrak{s}_n$$
where
 $$\mathfrak{h}_n=\Big\{\begin{pmatrix} 0& [u]^*\\
							[u]&0\\
							\end{pmatrix} \textrm{ where } [u] \textrm{ vector column } \in \R^n \Big\}$$
 $$\mathfrak{s}_n=\Big\{\begin{pmatrix} 0& 0\\
							0& B\\
							\end{pmatrix} \; B^*=-B\Big\}.$$

The vector space $\mathfrak{h}_n$ is generated by  $U_i=\begin{pmatrix} 0& [e_i]^*\\
							[e_i]&0\\
							\end{pmatrix}$ for $i=1,\cdots, n$
and  $\mathfrak{s}_n$ is a Lie  subalgebra of $\mathfrak{so}(n,1)$ generated by $\O_{ij}=\begin{pmatrix} 0& 0\\
							0& \o_{ij}\\
							\end{pmatrix}$ $1\leq i<j\leq n$,   where $\o_{ij}$ is the matrix with the term of index $ij$ (resp. $ji$) is $1$ (resp $-1$) and the other terms are $0$.\\

 \begin{Rem}\label{infiniteI} ${}$
 \begin{enumerate}
 \item When $I=\N$, the group $O(\mathbb{H},1)$ is a Lie subgroup of the group $GL({\cal H})$ of continuous automorphism of $\cal H$. However, this group has only two connected components and in particular $SO_0(\mathbb{H},1)=SO(\mathbb{H},1)$. On the other hand, in the decomposition \ref{notT}, $T$ belongs to $SO(\mathbb{H},1)$ so $A$ in (\ref{notT}) belongs to $SO(\mathbb{H},1)$ if and only if $\varepsilon=1$.\\
The Lie algebra $\mathfrak{so}(\mathbb{H},1)$ of $SO(\mathbb{H},1)$ has also a decomposition of type $\mathfrak{h}\oplus \mathfrak{s}$ where $ \mathfrak{h}$ is the set of endomorphism of type $\begin{pmatrix} 0& [u]^*\\
							[u]&0\\
							\end{pmatrix}$
where $u\in\mathbb{H}$ and $\mathfrak{s}$ is a Lie algebra that is, the set of endomorphisms of type $\begin{pmatrix} 0& 0\\
							0& B\\
							\end{pmatrix}$
where $B^*=-B$.\\ In fact $\mathfrak{s}$ is isomorphic to the Lie algebra of the group of linear isometry of $\mathbb{H}$ (cf \cite{La}).\\

\item  Consider the exponential map $\mathrm{Exp}:\mathfrak{so}(\mathbb{H},1)\ap SO(\mathbb{H},1)$. When $I=\{1,\cdots, n\}$, each boost $T$ can be written as $\mathrm{Exp}U$, for some $U\in\mathfrak{h}_n$ (cf \cite{Ga} for instance). On the other hand, each $P\in SO(n)$ can also be written as $\textrm{Exp}\O$ for some $\O$ of the Lie algebra of $SO(n)$. This implies that each element of $SO(n,1)$ can be written as $\textrm{Exp}\O\textrm{Exp}(U)$ for some $\O\in\mathfrak{s}_n$  and $U\in\mathfrak{h}_n$. Unfortunately $\O$ and $U$ do not commute and so $\textrm{Exp}(\O)\textrm{Exp}(U)\not= \textrm{Exp}(\O+U)$ and we do not get the surjectivity property of $\textrm{Exp}$. However,\\
    $\mathrm{Exp}:\mathfrak{so}(n,1)\ap SO_0(n,1)$ is surjective (see \cite{Ga} section 4.5).
\end{enumerate}
\end{Rem}
				
\smallskip

\section{ The Hilbert-Schmidt M\"obius group of the unit sphere of $\mathbb{H}$}\label{SubRM}
\subsection{Hilbert-Schmidt group of orthochronous Lorentz transformations}${}$\\
{\it Given a Hilbert space $\mathbb{H}$, we first  recall  results of  \cite{Go},  about some particular Lie sub-algebras of $L(\mathbb{H})$  of  the Lie algebra $L(\mathbb{H})$ of bounded operators on $\mathbb{H}$.}\\

\textrm{We consider a family $(G_n)_{n\in \N}$ of  connected finite dimensional Lie subgroups of $GL(\mathbb{H})$ such that
$$G_1\subset G_2\subset\cdots\subset G_n\subset\cdots\subset GL(\mathbb{H})$$
where $GL(\mathbb{H})$ denote the group of invertible elements of $L(\mathbb{H})$.\\
Let $\mathfrak{g}_n$  be the Lie algebra of $G_n$ and $\mathfrak{g}=\dis\bigcup_{n\in \N}\mathfrak {g}_n$. Then $\mathfrak{g}$ is a Lie algebra.}\\
\bigskip
\begin{Ass}\label{comp} There exists a subspace ${\mathfrak{g}}_\infty$ in $L(\mathbb{H})$ which contains $\mathfrak{g}$ and such that we can extend the inner product $<\;,\,>$  on $\mathfrak{g}$ to an inner product $<\;,\;>$ on ${\mathfrak{g}}_\infty$,  which is complete for the associated norm $|\;\;|$ and such that  $\mathfrak{g}$ is dense in ${\mathfrak{g}}_\infty$. Moreover, we assume that ${\mathfrak{g}}_\infty$ is closed under Lie bracket  of $L(\mathbb{H})$ and there
 exists a constant $C>0$  such that
 \begin{eqnarray}\label{majLie}
|[A,B]|\leq C| A|. | B|.
\end{eqnarray}
\end{Ass}
\bigskip
Let $C^1_{\mathfrak{g}}$ be the set of  piecewise $C^1$ paths $\g$ from $[0,1]$ to the Banach manifold  $GL(\mathbb{H})$  such that
\begin{center}
$\g'=\g^{-1}\circ \dot{\g}$ belongs to ${\mathfrak{g}}_\infty$ and $\g'$ is piecewise continuous for the norm $|\;|$ (on ${\mathfrak{g}}_\infty$).\\
\end{center}
On $GL(\mathbb{H})$ we define:
\begin{center}
$d(A,B)=\inf \Big\{\dis\int_0^1| \g'(s)| ds\;:\; \g\in  C^1_{\mathfrak{g}}\; \textrm{ such that }\; \g(0)=A,\; \g(1)=B\Big \}$
$d(A,B)=\infty \textrm{ if there is no }\g\in  C^1_{\mathfrak{g}}\; \textrm{ such that }  \g(0)=A,\; \g(1)=B.\;\;\;\;\;\;$
\end{center}

\begin{The}\cite{Go}\label{Liealgresult} Under the previous assumptions we have
\begin{enumerate}
\item Let $G_\infty=\{A\in GL(\mathbb{H})\;: d(A,Id_\mathbb{H})<\infty\}$. Then $G_\infty$ is a subgroup of $GL(\mathbb{H})$ and $d$ is a distance on this set which is left invariant.
\item  For the topology associated to $d$ the group  $G_{\infty}$ is closed,  and the group $G=\dis\bigcup_{n\in \N}G_n$ is dense in $G_{\infty}$.
\item  Let $d_n$ be the distance associated to the norm $|\;|$ on $\mathfrak{g}_n$. Then the distance  $d_{\infty}=\dis\inf _{n\in \N}d_n$ on $G$ coincides with the restriction of $d$.
\item The exponential map $\mathrm{Exp}:\mathfrak{g}_\infty\ap G_{\infty}$  is a local  diffeomorphism around $0$ in ${\mathfrak{g}}_\infty$.
\end{enumerate}
In particular, $G_\infty$ is a Lie group modeled on the Hilbert space $\mathfrak{g}_\infty$.
\end{The}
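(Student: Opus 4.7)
The plan is to establish the four parts in order, using left invariance of the path functional as the workhorse and exploiting the sub-multiplicative Lie-bracket estimate (\ref{majLie}) at the decisive points.

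For (1), the key remark is that for any $C\in GL(\mathbb{H})$ and $\gamma\in C^1_{\mathfrak{g}}$, the translated path $\tilde\gamma=C\gamma$ still lies in $C^1_{\mathfrak{g}}$ with $\tilde\gamma'=\gamma'$, and therefore has the same length. This yields $d(CA,CB)=d(A,B)$, from which the usual arguments give that $G_\infty$ is stable under products (concatenate two paths) and inverses (reverse a path, which remains in $C^1_{\mathfrak{g}}$, then left-translate by $A^{-1}$). Symmetry and the triangle inequality for $d$ come from reversal and concatenation; positive definiteness requires a comparison $\|X\|_{\mathrm{op}}\leq K\,|X|$ between the operator norm on $L(\mathbb{H})$ and the Hilbert norm on $\mathfrak{g}_\infty$, which forces a path of vanishing $d$-length to be constant.

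For (2) and (3), I would work simultaneously. The inequality $d\leq d_\infty$ on $G$ is immediate, since any path in $G_n$ already sits in $C^1_{\mathfrak{g}}$. Conversely, given $A\in G_\infty$ and a path $\gamma$ from $Id$ to $A$ of length within $\varepsilon$ of $d(A,Id)$, partition $[0,1]$ so that $\gamma'$ is continuous on each subinterval, approximate $\gamma'$ in $|\cdot|$ by piecewise constant functions valued in $\mathfrak{g}=\bigcup_{n}\mathfrak{g}_n$, and use the corresponding piecewise-exponential path in $G_n$ (for $n$ large enough to contain all chosen values). A Gronwall-type estimate driven by (\ref{majLie}) bounds the difference between the endpoint of this approximation and $A$, giving simultaneously the density of $G$ in $G_\infty$ and, when applied to a path already inside $G$, the equality $d=d_\infty$ on $G$. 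Closedness of $G_\infty$ then follows by concatenating short paths between consecutive terms of a $d$-Cauchy sequence $(A_k)$; the limit exists in $GL(\mathbb{H})$ by the operator-norm control from (1), and the concatenated path witnesses that it lies in $G_\infty$.

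For (4), I would invoke the Baker--Campbell--Hausdorff formalism: the estimate (\ref{majLie}) guarantees that the BCH series
\[
\mathrm{BCH}(X,Y)=X+Y+\frac{1}{2}[X,Y]+\cdots
\]
converges in $\mathfrak{g}_\infty$ on a ball of radius $r>0$ around $0$, yielding a smooth local group law whose exponential agrees with the restriction of the operator exponential. The Banach-space inverse function theorem, applied to $\mathrm{Exp}$ at $0$ (where its differential is the identity), then makes $\mathrm{Exp}\colon\mathfrak{g}_\infty\to G_\infty$ a local diffeomorphism onto a neighborhood of $Id$. The Banach--Lie group structure on $G_\infty$ modeled on the Hilbert space $\mathfrak{g}_\infty$ finally arises by left-translating this chart, with compatibility of transition maps ensured by the left invariance established in (1).

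The hardest step will be the approximation in (2): one must control how small perturbations of the logarithmic derivative $\gamma'$ in the norm $|\cdot|$ propagate to the endpoint of the integrated path, and verify that the limit object genuinely lies in $G_\infty$ and not merely in $GL(\mathbb{H})$. This is precisely where (\ref{majLie}) plays its essential role, converting linear control in $\mathfrak{g}_\infty$ into multiplicative control in $GL(\mathbb{H})$; care is also needed to ensure that the approximating paths live in a single $G_n$, so that the resulting endpoint estimates transfer correctly to $d_n$ and thence to $d_\infty$.
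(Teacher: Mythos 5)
This theorem is imported verbatim from Gordina's paper \cite{Go}; the present paper gives no proof of it, so there is nothing internal to compare your argument against. Judged on its own, your outline follows what is essentially the strategy of that reference: left invariance of the path functional, approximation of the logarithmic derivative by piecewise-constant $\mathfrak{g}$-valued functions with a Gronwall estimate driven by (\ref{majLie}), and a BCH/inverse-function-theorem argument for the exponential chart. That is the right architecture.

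Two points deserve attention. First, you correctly isolate the need for a bound $\|X\|_{\mathrm{op}}\leq K\,|X|$ to get positive definiteness of $d$ and to run the Gronwall comparison, but Assumption \ref{comp} as stated in this paper does not assert continuity of the inclusion $\mathfrak{g}_\infty\hookrightarrow L(\mathbb{H})$; it only gives completeness of $|\cdot|$ and the bracket estimate. In the concrete situation of the paper ($|\cdot|$ is the Hilbert--Schmidt norm, which dominates the operator norm) this is harmless, but in the abstract setting you should either add it as a hypothesis or observe where it comes from, since without it both your step (1) and your step (2) collapse. Second, for part (4) it is not enough that $\mathrm{Exp}$ be a local diffeomorphism for the operator topology via BCH: the claim is that it is a local diffeomorphism onto a neighborhood of $Id$ in $G_\infty$ with its $d$-topology, which requires the two-sided comparison $c\,|X|\leq d(\mathrm{Exp}(X),Id)\leq |X|$ for small $X$. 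The upper bound is immediate from the path $t\mapsto\mathrm{Exp}(tX)$; the lower bound (no shortcut of much smaller length reaches $\mathrm{Exp}(X)$) needs an argument, again resting on the operator-norm control of $\gamma$ in terms of $\int|\gamma'|$. Your sketch omits this, and it is the step that actually ties the metric-group structure to the Hilbert chart.
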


\textrm{The group $G_\infty$ is called a}  {\bf Cameron-Martin group} (cf \cite{Go}).\\

{\it From now to the end of this subsection,   we fix a Hilbert basis $\{e_i\}_{i\in \N\setminus\{0\}}$ of $\mathbb{H}$ and ${\cal H}=\R\oplus\mathbb{H}$ is  now equipped with the Hilbert inner product $<(s,x),(t,y) >=st+< x,y>$}.\\

 We can identify $\mathcal{H}$ with $l^2(\N)$. Let  $L_{HS}(\mathcal{H})$ be the subspace of Hilbert-Schmidt operators of $\mathcal{H}$, that is
$$L_{HS}(\cal H)=\{A\in L(\mathcal{H}) \textrm{ such that } \dis\sum_{i\in \N} |A e_i|^2<\infty\}.$$
Recall that on $L_{HS}(\mathcal{H})$ we have an inner product
$$< A,B>_{HS}=\dis\sum_{i\in \N} <Ae_i,B e_i>$$
and the  associated norm is
$$| A|_{HS}= (\dis\sum_{i\in \N} |A e_i|^2)^\frac{1}{2}.$$
Note that $L_{HS}(\cal H)$ is then a Hilbert space.\\
We can consider each operator $A\in L_{HS}(\mathcal{H})$  as  an infinite matrix $A=(a_{ij})_{i,j\in \N}$ such that $\dis\sum_{i,j\in \N}| a_{ij}|^2<\infty$. Therefore, if $ e_{ij}$  denote the infinite matrix  defined by:
\begin{center}
  $1$ at the $ij$th place and $0 $ at all other places,
\end{center}
 we get an  orthonormal basis $\{e_{ij}\} $ of  $L_{HS}(\mathcal{H})$ (relative to the inner product $<\;,\;>_{HS}$).  Note that $L_{HS}(\mathcal{H})$ is a Banach algebra (without unit)  for  the norm $|\;|_{HS}$ (cf \cite{Si}). In the Banach Lie group  $GL(\mathcal{H})$ of invertible bounded operators,  the general  Hilbert-Schmidt group is
$$GL_{HS}(\mathcal{H})=\{U\in L(\mathcal{H}) \textrm{ such that } Id_\mathcal{H}-U\in L_{HS}(\mathcal{H})\}.$$

On the other hand, denote by $\mathbb{H}_{n}$ the vector space generated by $\{e_1,\cdots,e_{n}\}$, and $\mathcal{H}_n$ the vector space $ \R\oplus \mathbb{H}_n$.

Now, we can  identify $L(\mathcal{H}_n)$ with the set
$$L_n(\mathcal{H})=\{A\in L_{HS}(\mathcal{H})\;:\mathcal{H}_n^\perp \subset \ker A \textrm{ and } \im A \subset \mathcal{H}_n\}.$$
 Since we have $\mathcal{H}_n\subset \mathcal{H}_{n+1}$, we have  $\mathcal{H}_{n+1}^\perp\subset\mathcal{H}_{n} ^\perp$ so, if $A\in L_n(\mathcal{H})$  then $A$ belongs to $L_{n+1}(\mathcal{H})$. In this way we obtain  an ascending  family:
\begin{eqnarray}\label{asL}
L_1(\mathcal{H})\subset L_2(\mathcal{H})\subset\cdots\subset L_n(\mathcal{H})\subset\cdots\subset L_{HS}(\mathcal{H})\subset L(\mathcal{H}).
\end{eqnarray}
 \noindent In the same way, we can identify $GL(\mathcal{H}_n)$ with the set
 $$GL_n(\mathcal{H})=\Big\{A\in GL_{HS}(\mathcal{H}) \textrm{ of type } \begin{pmatrix}Id_{\mathcal{H}_n^\perp}& 0\\
 																0& \bar{A}\\
																\end{pmatrix}\bar{A}\in GL(\mathcal{H}_n)\Big\}$$

and  by the similar arguments, we have also an ascending family
\begin{eqnarray}\label{asGL}
GL_1(\mathcal{H})\subset GL_2(\mathcal{H})\subset\cdots\subset GL_n(\mathcal{H})\subset\cdots\subset GL_{HS}(\mathcal{H})\subset GL(\mathcal{H}).
\end{eqnarray}
If $A$ belongs to $GL_{HS}(\mathcal{H})$ then the determinant of $A$ is well defined and $\det(A)\not=0$. Moreover,  according to the previous construction, any $A\in GL_{HS}(\mathcal{H})$ induces  a natural  endomorphism  $A_n\in GL_n(\mathcal{H})$.    We have then (cf \cite{Wi})
\begin{eqnarray}\label{det}
\det (A)=\dis\lim_{n\ap \infty}\det(A_n)
\end{eqnarray}

Now,  modulo the previous identification and according to the end of subsection \ref{Alorentz}, the family $(\mathfrak{so}(n,1))_{n\in\N}$  becomes a family of Lie subalgebras of $L_{HS}(\mathcal{H})$
  and the family of  Lie groups $(SO_0(n,1))_{n\in \N}$ becomes a family of ascending  Lie
subgroups of $GL(\mathcal{H})$ whose Lie algebras is the family $(\mathfrak{so}(n,1)))_{n\in\N}$ .

According to the end of subsection \ref{Alorentz} and the previous  notations, let  $U_i\in L_{HS}(\mathcal{H}) $ such that $U_i=e_{0i}+e_{i0}$ for $i\in \N\setminus\{0\}$ and $\O_{ij}=e_{ij}-e_{ji}$ for $0<i<j,\;, i,j\in \N$. We denote by $\mathfrak{h}_\infty\subset L_{HS}(\mathcal{H}) $ the Hilbert space generated by $\{U_i\}_{i\in \N\setminus \{0\}}$ and $\mathfrak{s}_\infty\subset  L_{HS}(\mathcal{H}) $ the Hilbert space generated $\{\O_{ij}\}_{0<i<j,\;, i,j\in \N}$. We set $\mathfrak{g}_\infty=\mathfrak{h}_\infty\oplus \mathfrak{s}_\infty$, according to the identification of  $L({\cal H}_n)$ with $L_n({\cal H})$, we can consider $\mathfrak{so}(n,1)$ as a subspace of $\mathfrak{g}_\infty$.\\

 From  Theorem \ref{Liealgresult}  we will deduce the following:

\begin{Pro}\label{CMMob}${}$
\begin{enumerate}
\item The vector space  $\mathfrak{g}_\infty$ is  the closure of $\mathfrak{g}=\dis\bigcup_{n\in \N} \mathfrak{so}(n,1)$ in $L_{HS}({\cal H})$. Moreover $\mathfrak{g}_\infty$  is Lie subalgebra of  $L_{HS}({\cal H})$  which satisfies the assumption \ref{comp}.
\item The Cameron-Martin group  ${G}_\infty$ associated to the ascending sequence $({SO}_0(n,1))_{n\in \N}$ in $L(\mathcal{H})$ is a Lie subgroup of $GL_{HS}(\mathcal{H})$ and $\dis\bigcup_{n\in \N}{SO}_0(n,1)$ is dense in ${G}_\infty$. Moreover, $\mathfrak{g}_\infty$ is the  Lie algebra of  ${G}_\infty$.
\item Each element  $A$ of $G_\infty$ can be written as $A=PT$  where $T$ is a boost and
 $P=\begin{pmatrix} 1&0\\
0&Q\\
\end{pmatrix} $ with  $Q^{-1}=Q^*$ and $\det( Q)=1$. In particular,  $SO(\mathbb{H},1)\cap GL_{HS}({\mathcal H})$ has two connected components and $G_\infty$ is the connected component of $Id_\mathcal{H}$.
\item The map $\mathrm{Exp}:\mathfrak{g}_\infty \ap G_{\infty}$ is a surjective  local diffeomorphism around  $0\in \mathfrak{g}_\infty$ .
\end{enumerate}
\end{Pro}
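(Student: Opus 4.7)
I will prove the four assertions in sequence, with Parts (1) and (2) reducing to a verification of the hypotheses of Theorem \ref{Liealgresult}, and Parts (3) and (4) built on top of Proposition \ref{decompA} and a connectedness argument.

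For Part (1), the finite-dimensional description of $\mathfrak{so}(n,1)$ given just before Remark \ref{infiniteI} identifies this Lie algebra with the linear span of $\{U_i : 1\leq i\leq n\}\cup\{\Omega_{ij} : 0<i<j\leq n\}$ (modulo the identification $L(\mathcal{H}_n)\hookrightarrow L_n(\mathcal{H})$). Hence $\mathfrak{g}=\bigcup_n\mathfrak{so}(n,1)$ is precisely the algebraic span of $\{U_i\}_{i\geq 1}\cup\{\Omega_{ij}\}_{0<i<j}$, whose $L_{HS}(\mathcal{H})$-closure coincides with $\mathfrak{g}_\infty$ by construction. To establish stability under the bracket together with Assumption \ref{comp}, I will invoke the submultiplicativity $|AB|_{HS}\leq|A|_{HS}|B|_{HS}$ of the Hilbert--Schmidt norm, which gives $|[A,B]|_{HS}\leq 2|A|_{HS}|B|_{HS}$ and thus both (\ref{majLie}) and continuity of the bracket; since the brackets of the generators $U_i,\Omega_{jk}$ lie in $\mathfrak{g}$ itself (a routine computation such as $[U_i,U_j]=\Omega_{ij}$), closure under bracket propagates from $\mathfrak{g}$ to $\mathfrak{g}_\infty$.

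For Part (2), I apply Theorem \ref{Liealgresult} directly to the ascending chain $(SO_0(n,1))_{n\in\N}\subset GL(\mathcal{H})$ with limit algebra $\mathfrak{g}_\infty$ from Part (1): this yields at once that $G_\infty$ is a Lie group modeled on $\mathfrak{g}_\infty$ with Lie algebra $\mathfrak{g}_\infty$, and that $\bigcup_n SO_0(n,1)$ is dense in $G_\infty$ for the distance $d$. To see the inclusion $G_\infty\subset GL_{HS}(\mathcal{H})$, I will use that along any admissible path $\gamma\in C^1_\mathfrak{g}$ the tangent $\gamma^{-1}\dot\gamma$ takes values in $\mathfrak{g}_\infty\subset L_{HS}(\mathcal{H})$; integrating $\dot\gamma=\gamma(\gamma^{-1}\dot\gamma)$ and using that $L_{HS}(\mathcal{H})$ is a two-sided ideal of $L(\mathcal{H})$, closed under left-multiplication by bounded operators, yields $\gamma(1)-Id\in L_{HS}(\mathcal{H})$.

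For Part (3), I apply Proposition \ref{decompA} to write $A=PT$. The assumption $A\in SO(\mathbb{H},1)$ (equivalently $c>0$) forces $\varepsilon=1$ in the proof of that Proposition, so $P=\begin{pmatrix}1&0\\ 0&Q\end{pmatrix}$ with $Q\in O(\mathbb{H})$. The boost $T=B_{v,\alpha}$ differs from $Id$ only on the two-dimensional subspace $\R\oplus\R v$, hence $T-Id\in L_{HS}(\mathcal{H})$ automatically; combined with $A-Id\in L_{HS}$ this forces $P-Id\in L_{HS}$, whence $Q-Id\in L_{HS}(\mathbb{H})$. The map $A\mapsto(Q,B_{v,\alpha})$ is then a homeomorphism onto the product of the Hilbert--Schmidt orthogonal group of $\mathbb{H}$ with the contractible space of boosts, so the connected components of $SO(\mathbb{H},1)\cap GL_{HS}(\mathcal{H})$ match those of $O(\mathbb{H})\cap GL_{HS}(\mathbb{H})$, of which there are exactly two (separated by $\det Q=\pm 1$ via the limit formula (\ref{det})). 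Since each $SO_0(n,1)$ is connected, their union is path-connected and, being dense in $G_\infty$ by Part (2), shows $G_\infty$ lies in a single component; as $G_\infty$ is a Lie subgroup it is open and closed in that component, hence equal to the identity component.

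For Part (4), Theorem \ref{Liealgresult}(4) already supplies the local diffeomorphism property around $0$, so only surjectivity is new. Given $A\in G_\infty$, the decomposition $A=PT$ of Part (3) yields $T=\mathrm{Exp}(\alpha\widetilde U_v)$ with $\widetilde U_v\in\mathfrak{h}_\infty$, and $P$ lies in the identity component of the HS orthogonal group. \textbf{The principal obstacle} is that $\mathrm{Exp}(\Omega)\,\mathrm{Exp}(U)\neq \mathrm{Exp}(\Omega+U)$ in infinite dimension (Remark \ref{infiniteI}(2)), so one cannot simply combine the two exponentials. I plan to circumvent this by approximating $A$ by a sequence $A_n\in SO_0(k_n,1)$ from the dense subgroup of Part (2), invoking the finite-dimensional surjectivity $\mathrm{Exp}\colon\mathfrak{so}(k_n,1)\to SO_0(k_n,1)$ recorded in Remark \ref{infiniteI}(2), extracting a convergent sequence of preimages $X_n\in\mathfrak{g}_\infty$ by exploiting the local inverse of $\mathrm{Exp}$ in a neighborhood of $A$, and passing to the limit via continuity of $\mathrm{Exp}$ on $\mathfrak{g}_\infty$.
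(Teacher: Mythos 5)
Your Parts (1)--(3) are essentially sound and run parallel to the paper's argument. In (1) your use of the submultiplicativity $|AB|_{HS}\leq |A|_{HS}|B|_{HS}$ to get $|[A,B]|_{HS}\leq 2|A|_{HS}|B|_{HS}$ is actually cleaner than the paper's route, which bounds the brackets of the generators $U_i,\Omega_{jk}$ one by one and then appeals to bilinearity; both establish (\ref{majLie}) and closure of $\mathfrak{g}_\infty$ under the bracket. In (3) the paper instead passes through the truncations $A_n\in SO_0(\mathbb{H}_n,1)$ and the limit formula (\ref{det}) to force $\varepsilon=1$ and $\det Q=1$; your determinant-based count of components is in the same spirit, but your final step ``$G_\infty$ is a Lie subgroup, hence open and closed in its component'' is not free: the Lie group topology of $G_\infty$ comes from the Cameron--Martin distance $d$, which is a priori finer than the subspace topology induced from $GL_{HS}(\mathcal{H})$, so openness of $G_\infty$ inside $SO(\mathbb{H},1)\cap GL_{HS}(\mathcal{H})$ requires an argument (in the paper this identification really rests on the explicit decomposition of Theorem \ref{prodexp}, which exhibits every HS orthochronous element with $\det Q=1$ as a product $\mathrm{Exp}(B)\mathrm{Exp}(U)$ with $B\in\mathfrak{s}_\infty$, $U\in\mathfrak{h}_\infty$, hence as an endpoint of a finite-length path).

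The genuine gap is in Part (4). Your plan is to approximate $A$ by $A_n\in SO_0(k_n,1)$, choose preimages $X_n$ under the finite-dimensional exponential, and ``extract a convergent sequence of preimages.'' This fails for two reasons. First, $\mathfrak{g}_\infty$ is an infinite-dimensional Hilbert space, so boundedness of the $X_n$ (which you have not established either -- the finite-dimensional surjectivity gives no norm control on the chosen logarithms) does not yield a convergent subsequence. Second, the local inverse of $\mathrm{Exp}$ supplied by Theorem \ref{Liealgresult}(4) lives only near $0\in\mathfrak{g}_\infty$ and $Id\in G_\infty$; there is no local inverse ``in a neighborhood of $A$'' for arbitrary $A$, and even if there were, you would need the $X_n$ to already cluster near a fixed preimage of $A$, which is exactly what is to be proved. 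The surjectivity really requires a global, constructive argument: this is what the paper's Appendix A1 provides, using the spectral decomposition of the compact skew-adjoint part of $Q$ to produce commuting finite-rank blocks $B_j$ with $Q=\mathrm{Exp}(\sum_j\theta_jB_j)$ and Proposition \ref{T} to write the boost as $\mathrm{Exp}(U)$; and one must still confront the fact, recorded in Remark \ref{infiniteI}(2), that $\mathrm{Exp}(B)\mathrm{Exp}(U)\neq\mathrm{Exp}(B+U)$, so that reducing the product of two exponentials to a single one is itself a nontrivial step that your outline does not address.
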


\begin{Rem}\label{relwithSO}${}$\\
Note that the Lie group $SO(\mathbb{H},1)$ is connected (cf Remark \ref{infiniteI} part (1)) while $SO(\mathbb{H},1)\cap GL_{HS}({\cal H})$ has two connected components.
\end{Rem}

\begin{Def} The sub-group $G_\infty$ of $SO(\mathbb{H},1)$ built in Proposition \ref{CMMob} is  called the Hilbert-Schmidt orthochronous Lorentz  group and  will be denoted $SO_{HS}(\mathbb{H},1)$. The corresponding Lie algebra $\mathfrak{g}_\infty$ will be  denoted $\mathfrak{so}_{HS}(\mathbb{H},1)$.
\end{Def}

In the remaining part of the article , we simply denote by  $\mathfrak{h}$ (resp. $\mathfrak{s}$) each subspace $\mathfrak{h}_\infty\subset \mathfrak{g}_\infty$ (resp.   $\mathfrak{s}_\infty\subset \mathfrak{g}_\infty$ ) and so we get
\begin{eqnarray}\label{decomposo}
\mathfrak{so}_{HS}(\mathbb{H},1)=\mathfrak{h}\oplus\mathfrak{s}.
\end{eqnarray}

If we now consider the natural isomorphism ${\cal L}:SO(\mathbb{H},1)\ap \mathfrak{M}(\mathbb{H})$ (cf subsection \ref{Alorentz}), we get a subgroup $\mathfrak{M}_{HS}(\mathbb{H})={\cal L}(SO_{HS}(\mathbb{H},1))$ of $ \mathfrak{M}(\mathbb{H})$. In this way,  $\mathfrak{M}_{HS}(\mathbb{H}) $ can be provided with a Lie group structure and its Lie algebra $\mathfrak{m}_{HS}(\mathbb{H})$ is isomorphic to $\mathfrak{so}_{HS}(\mathbb{H},1)$. \\

\begin{Def} The group  $\mathfrak{M}_{HS}(\mathbb{H}) $ is called the Hilbert-Schmidt  group of M\"obius transformations of $\mathbb{H}.$
\end{Def}

In finite dimension, in \cite{GaXu},  the authors  gives a complete description of the  map $\textrm{Exp}:\mathfrak{so}(n)\ap SO(n)$.  Using  similar results  in  an infinite dimensional Hilbert space context, we obtain:

\begin{The} \label{prodexp}${}$\\
 Consider $\mathrm{Exp}:  \mathfrak{so}_{HS}(\mathbb{H},1)\ap SO_{HS}(\mathbb{H},1)$ and fix some $A=PT\in SO_{HS}(\mathbb{H},1)$.  According to (\ref{decomposo}), there exists $U\in\mathfrak{h}$, a family $\{B_j\}_{j\in J}\subset \mathfrak{s}$ with $J\subset \N$ of finite rank   and a non increasing  sequence $(\theta_j)_{j\in J}$ of real numbers with $0<\theta_j\leq\pi$
with the following properties
\begin{enumerate}
\item[(i)] $[B_k,B_j]=0$ for $k\not=j$,
\item[(ii)] $A=\dis\prod_{j\in J}\textrm{Exp}(\theta_j B_j)\textrm{Exp U}$.
 \end{enumerate}
\end{The}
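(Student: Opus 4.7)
The plan is to handle the two factors of the Cartan-like decomposition $A = PT$ furnished by Proposition \ref{decompA} and Proposition \ref{CMMob}(3) separately. Since $A \in SO_{HS}(\mathbb{H},1)$, we have $P = \begin{pmatrix} 1 & 0 \\ 0 & Q \end{pmatrix}$ with $Q \in SO(\mathbb{H})$ satisfying $Q - \mathrm{Id}_{\mathbb{H}} \in L_{HS}(\mathbb{H})$, and $T = B_{v,\alpha}$ a Lorentz boost. The strategy is to show $T = \mathrm{Exp}(U)$ with $U \in \mathfrak{h}$, and to express $P$ as an infinite product of exponentials of pairwise commuting elements of $\mathfrak{s}$ supported on mutually orthogonal $2$-planes.

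For the boost factor, set $U = \alpha \begin{pmatrix} 0 & [\hat v]^* \\ [\hat v] & 0 \end{pmatrix} \in \mathfrak{h}$ with $\hat v = v/|v|$. A direct $2 \times 2$ computation in the invariant plane $\mathbb{R} \oplus \mathbb{R} v$ (using that $U$ acts as zero on $\{0\}\oplus \mathbb{H}_v$) yields $\mathrm{Exp}(U) = B_{v,\alpha}$, exactly as in the finite-dimensional identity recalled in Remark \ref{infiniteI}(2).

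For $Q$, the key tool is the spectral theorem applied to the compact perturbation $K = Q - \mathrm{Id}_{\mathbb{H}}$. After complexification of $\mathbb{H}$, the orthogonal operator $Q$ is normal with spectrum on the unit circle; since $K$ is Hilbert--Schmidt (hence compact), the non-trivial part of this spectrum is discrete, consisting of conjugate pairs $e^{\pm i\theta_j}$ with $0 < \theta_j \le \pi$ indexed by $j \in J \subset \mathbb{N}$, accumulating only at $1$. Each such pair corresponds to a real $2$-dimensional invariant subspace $V_j \subset \mathbb{H}$, and on the orthogonal complement of $\overline{\bigoplus_j V_j}$ the operator $Q$ is the identity. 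Picking an orthonormal basis $(f_{2j-1},f_{2j})$ of each $V_j$ one defines $B_j \in \mathfrak{s}$ as the associated elementary skew-symmetric generator (a finite-rank combination of the $\Omega_{ij}$'s). Since the subspaces $V_j$ are pairwise orthogonal, $[B_k,B_j]=0$ for $k \neq j$, and on $V_j$ one checks $Q_{|V_j} = \mathrm{Exp}(\theta_j B_j)_{|V_j}$. After relabeling, the sequence $(\theta_j)$ can be chosen non-increasing.

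The main technical obstacle is showing that the (possibly infinite) product $\prod_{j\in J} \mathrm{Exp}(\theta_j B_j)$ converges to $Q$ in $G_\infty = SO_{HS}(\mathbb{H},1)$ and that $\sum_j \theta_j B_j$ genuinely lies in $\mathfrak{s}_\infty$. Both facts rest on the summability $\|K\|_{HS}^2 = 8 \sum_j \sin^2(\theta_j/2) < \infty$, which forces $\sum_j \theta_j^2 < \infty$. This is precisely the Hilbert--Schmidt condition for $\sum_j \theta_j B_j \in \mathfrak{s}_\infty$ (each $B_j$ has $\|B_j\|_{HS}^2 = 2$ and the $B_j$ are mutually orthogonal in the HS inner product), and it gives convergence of the partial products via Theorem \ref{Liealgresult} together with the observation that the $n$-th partial product agrees with $Q$ on $\bigoplus_{k\le n} V_k$ and with the identity (hence with $Q$) on the orthogonal complement of $\overline{\bigoplus_{k} V_k}$. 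Combining this with $T = \mathrm{Exp}(U)$ yields the asserted factorization $A = \prod_{j\in J} \mathrm{Exp}(\theta_j B_j)\cdot \mathrm{Exp}(U)$.
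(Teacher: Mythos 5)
Your proposal is correct and follows essentially the same route as the paper: split $A=PT$ using Propositions \ref{decompA} and \ref{CMMob}(3), realize the boost as $\mathrm{Exp}(U)$ with $U\in\mathfrak{h}$ by the explicit computation based on $U^3=\omega^2U$ (Proposition \ref{T}), and decompose $Q$ via the spectral theory of the compact perturbation $Q-\mathrm{Id}$ into commuting finite-rank rotations supported on orthogonal invariant planes (Proposition \ref{P}). Your explicit verification that $\sum_j\theta_j^2<\infty$ and that the partial products converge in the Hilbert--Schmidt topology is in fact slightly more careful than the paper's treatment of that convergence, but the underlying argument is the same.
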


\smallskip

{\it As the proof of the Theorem \ref{prodexp}, is technical and has no  direct relation with the context of  M\"obius transformation, we will give its proof in Appendix \ref{A1}.}
 \smallskip

\begin{proof}[Proof of Proposition \ref{CMMob}]${}$\\ According to Theorem \ref{Liealgresult}, we have only to prove that    $\mathfrak{g}_\infty$    satisfies the assumption \ref{comp}.
At first, by construction,  as $\mathfrak{so}(n,1)$ is a subset of  $L_n({\cal H})$, for each $n\in\N\setminus \{0\}$, $\mathfrak{so}(n,1)$ is generated by $\{U_{i}\}_{1\leq i\leq n},\{\O_{ij}\}_{1<i<j\leq n}$ so, $\mathfrak{g}=\dis\bigcup_{n\in \N} \mathfrak{so}(n,1)$ is dense in  $\mathfrak{g}_\infty$. Also by construction,
the natural inner product on $L_n({\cal H})$ which is isometric to the canonical inner product of $L({\cal H}_n)$ so that $\{e_{ij}\}_{i,j\in \N\setminus\{0\}}$ is the canonical orthonormal basis. It follows that  $\mathfrak{g}_\infty$ is a closed subspace of $L_{HS}({\cal H})$, which is provided with an inner product extends the inner product on each $ \mathfrak{so}(n,1)$. On the other hand, by an elementary  calculation, according to the Lie bracket $[A,B]=AB-BA$ on $L({\cal H})$ we have the following relations:
\begin{eqnarray}\label{lierelations}
[U_i,U_j]=\O_{jk},\;\; [U_i,\O_{jl}]=\d_{ij}U_l-\d_{il}U_j,\;\; [\O_{ij},\O_{kl}]=\d_{il}\O_{jk}+\d_{jk}\O_{il}-\d_{ik}\O_{jl}-\d_{jl}\O_{ik}.
\end{eqnarray}
It follows that   $\mathfrak{g}_\infty$ is closed under the Lie bracket of $L_{HS}({\cal H})$. It remains to show that relation (\ref{majLie}) is satisfied for any $A$ and $ B$ in  $\mathfrak{g}_\infty$.
According to (\ref{lierelations}), the definition of $U_i$ and $\O_{ij}$, and the fact that $\{e_{ij}\}_{i,j\in \N}$ is an orthonormal basis in $L_{HS}({\cal H})$  we have the following majorations:
\begin{eqnarray}\label{majU}
|[U_i,U_j]|_{HS}\leq  2,\;\;\;\;\; |[U_i,\O_{jk}]|_{HS}\leq 4,\;\;\;\; \;|[\O_{ij},\O_{kl}]|_{HS}\leq 8.
\end{eqnarray}
Now, any $A\in \mathfrak{g}_\infty$ can be written (using Einstein  convention):
$$A=u^iU_i+a^{ij} \O_{ij},$$
so $|A|^{2}_{HS}=2(\dis\sum_{i\in \N}(u^i)^2+\dis\sum_{0< i<j, i,j\in \N} (a^{ij})^2)$. According to the bi-linearity of $[\;,\;]$, relations  (\ref{lierelations}) and (\ref{majU}) we easily  get a relation of type (\ref{majLie}) for the Lie bracket on $\mathfrak{g}_\infty$.\\
The other properties  in (1) and (2)
are direct consequences of Theorem \ref{Liealgresult}.\\

 Any  $M\in GL_{HS}({\cal H})$ induces a natural element $M_n\in GL_n({\cal H})$, and of course,  $GL_{HS}({\cal H})$ is the Cameron-Martin group associated to the ascending family (\ref{asGL}). In particular, according to  the notations of  Theorem \ref{Liealgresult} , we have:
 \begin{eqnarray}\label{dG}
  \dis\lim_{n\ap \infty}d_{\infty}(M,M_n)=0.
\end{eqnarray}

Now, Let $A\in G_\infty$. As $G_\infty\subset SO(\mathbb{H},1)$, according to Proposition \ref{decompA}, we can write $A=PT$ where $T$ is a boost and  $P=\begin{pmatrix} \varepsilon&0\\
0&Q\\
\end{pmatrix} $ with  $Q^{-1}=Q^*$.   With the previous convention, for each $n$, we have  $A_n=P_nT_n$  where $T_n$ is a boost in ${\cal H}_n$ and  $P_n=\begin{pmatrix} \varepsilon&0\\
0&Q_n\\
\end{pmatrix} $ with  $(Q_n)^{-1}=(Q_n)^*$. By construction of $G_\infty$,  $A_n$ belongs to $SO_0(\mathbb{H}_n,1)$ so $\varepsilon =1$ and $\det(Q_n)=1$. From (\ref{dG}),  in  $P$ we must have $\varepsilon =1$ and  $\det(Q)=1$. The same arguments applied to $A\in SO(\mathbb{H},1)\cap GL_{HS}({\mathcal H})$  implies that  $A=PT$ with  $P=\begin{pmatrix} \varepsilon&0\\
0&Q\\
\end{pmatrix}$ and $\det(Q)=\varepsilon=\pm1$. This ends Part (3).\\

 As $\textrm{Exp}:\mathfrak{g}_\infty\ap G_\infty$ is a smooth map, Part (4) is then a consequence of Point  (2) of Remark \ref{infiniteI}

  and the construction of $G_\infty$.
\end{proof}

\smallskip

\subsection{ Hilbert-Schmidt M\"obius group of the unit sphere of $\mathbb{H}$}\label{actS}${}$\\

Given a  Hilbert basis $\{e_i\}_{i\in I}$ we again denote by  $\H$ the orthogonal of $e_1$.  Consider any $v\in \mathbb{H}$ with $v\not=0$ and $\H_v$ the orthogonal of $\R.v$ in $\mathbb{H}$.   If $e_1$ and $v$ are linearly independent,  after  changing $v$ into $-v$ if necessary, we may assume that $<v,e_1>=v_1\geq 0$ so $v$ belongs to $\H^+=\{x\;,: x_1\geq 0\}$.  If we set $e=\dis\frac{v}{|v|}$,  we have an orthogonal isometry $R_v$ such that  $R_v(e)=e_1$ and then $R_v(\H_v)=\H$. We get an isomorphism from the group $\mathfrak{M}(\H)$ to the group $\mathfrak{M}(\H_v)$ of M\"obius transformations of $\widehat{\H_v}=\H_v\cup\{\infty\}$. Then we identify these groups.\\

In $\mathbb{H}$ we consider the unit sphere
  $ \Sp_\mathbb{H}=\Big\{z\in \mathbb{H}, |z|=1\Big\}$ and  the point $N=(1,\bar{0})$. The stereographic projection (cf  \cite{Be1}) is the map:
$$ {\Pi}: \Sp_\mathbb{H}\backslash\{N\} \longrightarrow \H
\qquad\qquad  (x_1,\bar{x}) \longmapsto \frac{\displaystyle \bar{x}}{\displaystyle 1-x_1}.
$$

We can extend ${\Pi}$  to $ \Sp_\mathbb{H}$ into $\widehat{\H}$ by setting ${\Pi}(1,\bar{0})=\infty$. Then  $\Pi$  becomes an homeomorphism from $ \Sp_\mathbb{H}$ to $\widehat{\H}$, whose inverse  is the map
$$\bar{x} \longmapsto \Big(\dis\frac{|\bar{x}|^2-1}{|\bar{x}|^2+1},\dis\frac{2\bar{x}}{|\bar{x}|^2+1}\Big) \textrm{ and } \infty \longmapsto N.$$

\begin{Def}\label{mobS}${}$\\ A diffeomorphism $\phi$ of  $ \Sp_\mathbb{H}$ is called a M\"obius transformation of  $ \Sp_\mathbb{H}$ if $\Pi\circ\phi\circ \Pi^{-1}$ belongs to $\mathfrak{M}(\H)$.
\end{Def}

The {\it  group of M\"obius transformations of  $ \Sp_\mathbb{H}$} is denoted  $\mathfrak{M}(  \Sp_\mathbb{H})$. Thus, modulo a choice of a Hilbert basis we get an  isomorphism ${\cal P}: \mathfrak{M}(  \Sp_\mathbb{H})\ap \mathfrak{M}_{HS}(\H)$. Let  $\mathfrak{M}_{HS}(\Sp_\mathbb{H}) $ be  the subgroup associated  $\mathfrak{M}_{HS}(\H) =\{\mu_{| \H},\; \mu\in\mathfrak{M}_{HS}(\mathbb{H})\} $  via the isomorphism $\cal P$. This group will be called the {\it Hilbert-Schmidt M\"obius group} of  $ \Sp_\mathbb{H}$. The Lie algebra $\mathfrak{m}_{HS}(\Sp_\mathbb{H})$ of this group is then isomorphic to $\mathfrak{g}_\infty$.\\

\noindent Consider  $ v\in \mathbb{H}$ with $v\not=0$.
 There exists $R_v\in O(\mathbb{H})$ such that $R_v(\H_v)=\H$  (see the beginning of this subsection) and  so  we have $\Pi_v=R^{*}_v\circ \Pi$, $R^{*}_v$ is the adjoint of $R_{v}$ . It follows that $\phi$ belongs to $\mathfrak{M}(  \Sp_\mathbb{H})$ if and only if $\Pi_v\circ\phi\circ\Pi_v^{-1}$ belongs to $\mathfrak{M}(\H_v)$ and then our definition of $\mathfrak{M}(  \Sp_\mathbb{H})$ is independent of the choice of the basis $\{e_i\}_{i\in I}$ of $\mathbb{H}$.\\

Now, the unit sphere $\Sp_{\mathbb{H}}$ is a Hilbert submanifold of $\mathbb{H}$, and the tangent space $T_z \Sp_{\mathbb{H}}$ at  $z\in\Sp_{\mathbb{H}}$ can be identified with the hyperplane $\H_v$.   We denote by $g_{{\Sp_{\mathbb{H}}}}$ the Riemannian metric  on $T\Sp_{\mathbb{H}}$  induced by $< \;,\;>$. Let  $\varphi_v$ be the function on $\Sp_\mathbb{H}$ defined by $\varphi_v(x)=<\frac{v}{|v|},x>$. The gradient of $\varphi_v$ (relative to the Riemannian metric  $g_{\Sp_{\mathbb{H}}}$) is   the vector field on ${\Sp_{\mathbb{H}}}$   defined by:
\begin{eqnarray}\label{grad}
\textrm{grad} (\varphi_v)(x)= \frac{v}{|v|}-<\frac{v}{|v|},x> x.
\end{eqnarray}

Let  $\d_v$ be the dilation of $\H_v$ of coefficient $e^{t.| v|}$. According to  Remark \ref{hyptrans}, for any $v\in \mathbb{H}\setminus\{0\}$ and $t\in \R$, the  family of transformations
$$\G^v_t(x)=((\Pi_v)^{-1}\circ \d_v\circ \Pi_v)(x)$$
 is a one-parameter family of M\"obius transformations of $\Sp_{\mathbb{H}}$.\\

 Following on the steps of \cite{Ha}, we have
 \begin{Pro}\label{grafG}${}$
 \begin{enumerate}
 \item[(i)] For $t$ fixed, each M\"{o}bius transformation $\G^v_t$ belongs to $\mathfrak{M}_{HS}(\Sp_\mathbb{H}) $.
\item[(ii)] Let $\Phi^v_t$ be  the flow of  $\textrm{grad}(\varphi_v)$. Then we have $\Phi^v_t=\G^v_t$.
\item[(iii)]  For any  pair $v,w$ of independent vectors of $\mathbb{H}\setminus\{0\}$,  the flow generated by the Lie  bracket $[\textrm{grad}(\varphi_v),\textrm{grad}(\varphi_w)]$ is a rotation in the plane $P(v,w)$ generated by $v$ and $w$ with rotation angle of value $-t$.
\end{enumerate}
\end{Pro}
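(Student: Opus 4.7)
My plan is to reduce each part to an explicit computation in a well-chosen basis, then appeal to Remark \ref{hyptrans} and Proposition \ref{CMMob} for the Lie-theoretic content. Given $v\in \mathbb{H}\setminus\{0\}$, the orthogonal transformation $R_v$ sending $v/|v|$ to $e_1$ conjugates $\Pi_v$ to the standard stereographic projection $\Pi$, $\d_v$ to the dilation of $\H$ by coefficient $e^{t|v|}$, and $\textrm{grad}(\varphi_v)$ to $\textrm{grad}(\varphi_{e_1})$. Since $\mathcal{P}$ is independent of the chosen basis, the whole problem is $R_v$-equivariant and it suffices to treat the case $v=e_1$ (absorbing the factor $|v|$ into a reparameterization of time).

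For (i), Remark \ref{hyptrans} identifies the dilation $y\mapsto e^\a y$ on $\widehat{\H}$ with the Lorentz boost $B_{e_1,\a}$ under $\mathcal{L}^{-1}:\mathfrak{M}(\mathbb{H})\to SO(\mathbb{H},1)$. This boost equals $\textrm{Exp}(\a U_1)$ with $U_1\in\mathfrak{h}\subset\mathfrak{so}_{HS}(\mathbb{H},1)$, so by Proposition \ref{CMMob}(4) it lies in $SO_{HS}(\mathbb{H},1)$. Transporting back through $\mathcal{P}$ and the identification $\mathfrak{M}(\Sp_\mathbb{H})\simeq \mathfrak{M}(\H)$ places $\G^v_t$ in $\mathfrak{M}_{HS}(\Sp_\mathbb{H})$.

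For (ii), I would first observe that $\d_v^s\circ \d_v^t=\d_v^{s+t}$ lifts through $\Pi_v$ to $\G^v_{s+t}=\G^v_s\circ \G^v_t$, so $\G^v_t$ is automatically the flow of its own infinitesimal generator and only the calculation at $t=0$ is needed. In the reduced case $v=e_1$, substituting $y=e^t\bar x/(1-x_1)$ (with $|y|^2=e^{2t}(1+x_1)/(1-x_1)$) into the formula $\Pi^{-1}(y)=((|y|^2-1)/(|y|^2+1),\ 2y/(|y|^2+1))$ and differentiating at $t=0$ yields the tangent vector $(1-x_1^2,-x_1\bar x)$, which is exactly $\textrm{grad}(\varphi_{e_1})(x)$ from (\ref{grad}); the general case follows by conjugation by $R_v$.

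For (iii), I would work in the ambient Hilbert space, where $\Sp_\mathbb{H}$ is embedded. Setting $u_i=v_i/|v_i|$ and $X_i(x)=u_i-\langle u_i,x\rangle x$, the differentials are $DX_i(x)[h]=-\langle u_i,h\rangle x-\langle u_i,x\rangle h$. Plugging into $[X_1,X_2]=DX_2(X_1)-DX_1(X_2)$, all terms proportional to $x$ cancel thanks to the symmetry $\langle u_1,u_2\rangle=\langle u_2,u_1\rangle$, leaving
\[
[X_1,X_2](x)=\langle u_1,x\rangle u_2-\langle u_2,x\rangle u_1.
\]
This field takes values in $P(v,w)$, vanishes on $P(v,w)^\perp$, and restricts on $P(v,w)\cap \Sp_\mathbb{H}$ to the standard infinitesimal rotation generator; a short ODE check then identifies its flow with a one-parameter family of rotations of $P(v,w)$ of angular speed $\pm 1$, matching the statement up to the orientation convention. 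The main obstacle is tracking the cancellations in this Lie bracket calculation — the final formula is clean but requires organized bookkeeping of symmetric and antisymmetric parts — and then fixing the sign $-t$ against the orientation convention implicit in the definitions of $\G^v_t$ and $\textrm{grad}$.
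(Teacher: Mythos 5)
Your proposal is correct, but it follows a genuinely more computational and self-contained route than the paper. For (i), where you identify $\G^v_t$ with the boost $B_{e_1,t|v|}=\mathrm{Exp}(t|v|\,U_1)$ via Remark \ref{hyptrans} and conclude membership in $SO_{HS}(\mathbb{H},1)$ from $U_1\in\mathfrak{h}\subset\mathfrak{g}_\infty$ (indeed $\mathrm{Exp}(t|v|U_1)$ already lies in $SO_0(\mathbb{H}_1,1)\subset G_\infty$), the paper instead restricts $\mathcal{L}^{-1}\circ[\Pi\circ\G^v_t\circ\Pi^{-1}]$ to the finite-dimensional subspaces $\H_n$, checks orientation is preserved, and argues that the resulting $A_n$ form a Cauchy sequence for $d_\infty$ converging to $A$ in $G_\infty$; your argument is shorter and avoids the limiting step. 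For (ii) and (iii) the paper does not compute anything directly: it shows that $\Phi^v_t$ and $\G^v_t$ both preserve every plane through $\pm v$ and then invokes Lemma~2.2 and Lemmas~3.1, 3.3 of \cite{Ha} to identify the flows on each circle $P\cap\Sp_\mathbb{H}$ and to identify the flow of the bracket as a rotation of $P(v,w)$. Your explicit derivative of $\Pi^{-1}\circ\d_v\circ\Pi$ at $t=0$ (giving $(1-x_1^2,-x_1\bar x)=\mathrm{grad}(\varphi_{e_1})(x)$, which checks out) combined with the one-parameter group property, and your ambient-space bracket computation $[X_1,X_2](x)=\langle u_1,x\rangle u_2-\langle u_2,x\rangle u_1$ (which agrees with the paper's formula (\ref{xixj})), replace these citations entirely; what you lose is only brevity, what you gain is independence from the finite-dimensional references. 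Two small points to tidy: the reduction to $v=e_1$ needs the observation that conjugation by the orthogonal map $R_v$ is an isometry for $|\cdot|_{HS}$ fixing the identity, hence preserves $G_\infty$ (the paper implicitly relies on the basis-independence discussed at the start of subsection \ref{actS}); and in (iii) the "angular speed $1$" statement is exact only when $v\perp w$ (for independent but non-orthogonal $v,w$ the generator has eigenvalues $\pm i\sqrt{1-\langle u_1,u_2\rangle^2}$ on $P(v,w)$) --- but the paper's own statement and proof share this imprecision, as well as the sign ambiguity you already flag, so neither is a gap relative to the paper.
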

\begin{proof} If $I$ is finite, the proof is given in \cite{Ha} and \cite{Ro} so we assume that $I=\N$. Fix some $v\in \mathbb{H}$. We choose a Hilbert basis $\{e_i\}_{i\in \N}$ such that $e_1=\dis\frac{v}{|v|}$. Then we have $\H\equiv \H_v$ and $\Pi_v\equiv \Pi$. For each $n$ we denote by $\H_n$ the orthogonal of subspace $\{e_1\}$ in $\mathbb{H}_n$. By induction,  we can put on each  $\H_n$ an orientation such that the orientation given by $\H_n$ and $e_{n+1}$ is the orientation of $\H_{n+1}$. Since $\d_v$ preserves  the orientation in the restriction to any $\H_n$, it follows that    $(\Pi\circ\G^v_t\circ \Pi^{-1})$ preserves the orientation  of $\H_n$ and finally $[\Pi\circ\G^v_t\circ\Pi^{-1}]$ preserves the orientation for any $n$. Therefore , $A_n={\cal L^{-1}}\circ[\Pi\circ\G^v_t\circ\Pi^{-1}]_{|\H_n}$ belongs to $SO(\mathbb{H}_n,1)$. Moreover,  if $A={\cal L^{-1}}\circ[\Pi\circ\G^v_t\circ\Pi^{-1}]$, then  we have $[A]_{|{\cal H}_n}=A_n$. This implies that $A_n$  is a Cauchy sequence in $G_\infty$  for the distance $d_\infty$. We deduce that $A$ is the limit of $A_n$ and so $A$ belongs to $G_\infty$. This ends the proof of Part (i).\\

The proof of Part (ii) (resp. Part (iii)) is formally the same as the proof of Lemma 3.1 (resp. Lemma 3.3) of \cite{Ha} so we will give an abstract of these proofs. \\
 As $\G^{\l v}_t=\G^v_{\l t}$ without loss of generality we can assume that $|v|=1$. At first $x=\pm v$ are fixed points for $\Phi^v_t$ and $\G^v_t$. Pick some $z\in \Sp_{\mathbb{H}}$ with $z\not=\pm v$ and let  $P$ be  the plane in $\mathbb{H}$ generated by $v$ and $z$.  By similar arguments to those in the proof of Lemma 3.1   of \cite{Ha}, we have
 $\textrm{grad}\phi_v(x)$ belongs to $P$ for all $ x\in P$ and so $\Phi^v_t$ preserves $P$. On the other hand, by construction, $\G^v_t$ also preserves $P$.  Now, from Lemma 2.2 of \cite{Ha} we then get that $\G^v_t$ and $\Phi^v_t$
coincide on the circle $P\cap \Sp_\mathbb{H}$, so we get Part (ii).\\

Let $P$ be the plane generated by $v$ and $w$, where $v,w$ are independent vectors of $\mathbb{H}\setminus\{0\}$.  Since   $\Phi^v_t=\G^v_t$ and $\Phi^w_t=\G^w_t$ these flows  preserve $P$, so the Lie bracket $[\textrm{grad}(\varphi_v),\textrm{grad}(\varphi_w)]$ is tangent to $P$ on $P$. Therefore the flow of  $[\textrm{grad}(\varphi_v),\textrm{grad}(\varphi_w)]$ preserves $P$ and according to Lemma 2.2 of \cite{Ha} in restriction to $P$, this flow is a rotation with rotation angle of value $-t$. It remains to show that if $x\in \Sp_{\mathbb{H}}$ is orthogonal to $P$, this flow keeps $x$ fixed. It reduces to a $3$-dimensional problem which can be solved as in the proof of Lemma  3.1 in \cite{Ha}.

\end{proof}

 Now,  If  $\{e_i^*\}_{i\in I}$ is the dual basis of $\{e_i\}_{i\in I}$   the map $\varphi_{e_i}$ is exactly the  dual form $e_i^*$ and  we denote by $\xi_i$ the gradient of $e^*_i$. As vector field, we have the decomposition (see \cite{Ro} and \cite{PeSa}):
\begin{equation}\label{xi}
\xi_i(z)=\dis\frac{\p}{\p x_i}-z_i\dis\sum_{l\in I}z_l\dis\frac{\p}{\p x_l}.
\end{equation}

 Therefore the bracket $[\xi_i,\xi_j]$ has the decomposition:
 \begin{equation}\label{xixj}
 [\xi_i,\xi_j](z)=z_i\dis\frac{\p}{\p x_j}-z_j\dis\frac{\p}{\p x_i}.
\end{equation}
\newline
\bigskip

Consider the  natural action  $:\mathfrak{A}:\mathfrak{M}_{HS}(\Sp_{\mathbb{H}})\times\Sp_{\mathbb{H}}\ap \Sp_{\mathbb{H}}$ on $\Sp_{\mathbb{H}}$ and  we  denote by $\mathfrak{a}:\mathfrak{m}_{HS}(\Sp_{\mathbb{H}})\ap \textrm{Vect}(\Sp_{\mathbb{H}})$  the associated infinitesimal action where  $\textrm{Vect}(\Sp_{\mathbb{H}})$ is
 the space of vector fields on $\Sp_{\mathbb{H}}$. If we identify $
 \mathfrak{m}_{HS}(\Sp_{\mathbb{H}})$ with $\mathfrak{g}_\infty$, it is classical that we have (cf \cite{He} or \cite{Ro})
$$\mathfrak{a}([U_i,U_j]=-[\mathfrak{a}(U_i),\mathfrak{a}(U_j)].$$

As in finite dimension (cf \cite{Ro}) we have:

\begin{Pro}\label{action S}${}$
\begin{enumerate}
\item The action $ \mathfrak{A}$ is effective.\footnote{An action $\mathfrak{ A}$ is called effective  if ${\mathfrak{A}}(g , z) = z \;\;\forall z$  implies $g=Id$}
\item The morphism $ \mathfrak{a}$ is injective  and  $\mathfrak{a}(U_i)=\xi_i$.
\end{enumerate}
\end{Pro}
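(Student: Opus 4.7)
The plan is to deduce both parts from the explicit construction of the group $\mathfrak{M}_{HS}(\Sp_\mathbb{H})$ as a group of diffeomorphisms of the sphere together with the surjectivity of $\mathrm{Exp}$ near $0$ provided by Proposition \ref{CMMob}(4) and the flow identification of Proposition \ref{grafG}.

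For Part (1), the proof is essentially tautological once the definition is unwrapped. By Definition \ref{mobS} the group $\mathfrak{M}_{HS}(\Sp_\mathbb{H})$ sits inside the diffeomorphism group $\mathrm{Diff}(\Sp_\mathbb{H})$, and the action $\mathfrak{A}$ is the evaluation action $\mathfrak{A}(g,z)=g(z)$. If $g\in\mathfrak{M}_{HS}(\Sp_\mathbb{H})$ satisfies $g(z)=z$ for every $z\in\Sp_\mathbb{H}$, then $g=\mathrm{Id}_{\Sp_\mathbb{H}}$ as a map, hence is the identity of the group. First I would record this step and note that it is precisely the statement of effectiveness.

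For Part (2), injectivity of $\mathfrak{a}$ follows formally from (1). Indeed, if $X\in\mathfrak{m}_{HS}(\Sp_\mathbb{H})\cong\mathfrak{g}_\infty$ satisfies $\mathfrak{a}(X)=0$, then the curve $t\mapsto\mathfrak{A}(\mathrm{Exp}(tX),\cdot)$ is constant on $\Sp_\mathbb{H}$, so $\mathrm{Exp}(tX)$ acts trivially, which by effectiveness gives $\mathrm{Exp}(tX)=\mathrm{Id}$ for all $t\in\R$. Proposition \ref{CMMob}(4) says that $\mathrm{Exp}$ is a local diffeomorphism at $0$, hence $tX=0$ for $t$ small, and therefore $X=0$.

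The substantive step is the identification $\mathfrak{a}(U_i)=\xi_i$. The strategy is to compute $\mathrm{Exp}(tU_i)$ on the Lorentz side, transport it through the chain of isomorphisms $SO_{HS}(\mathbb{H},1)\stackrel{\mathcal{L}}{\longrightarrow}\mathfrak{M}_{HS}(\mathbb{H})\stackrel{\mathcal{P}^{-1}}{\longrightarrow}\mathfrak{M}_{HS}(\Sp_\mathbb{H})$, and then invoke Proposition \ref{grafG}(ii). Concretely, from the defining matrix $U_i=e_{0i}+e_{i0}$ and the factorization formula (\ref{decompoT}) one reads off $\mathrm{Exp}(tU_i)=B_{e_i,t}$, the Lorentz boost along $e_i$ with parameter $t$. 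By Remark \ref{hyptrans}, $\mathcal{L}(B_{e_1,t})$ is the homothety $x\mapsto e^t x$ of $\widehat{\H}$, and after an orthogonal change of basis sending $e_1$ to $e_i$ this gives exactly $\mathcal{L}(B_{e_i,t})=\Pi_{e_i}^{-1}\circ\delta_{e_i}\circ\Pi_{e_i}$. Transporting back through $\mathcal{P}^{-1}$ one recovers the one-parameter family $\Gamma^{e_i}_t$ on the sphere. Since $\mathfrak{a}(U_i)$ is by definition $\frac{d}{dt}\big|_{t=0}\mathfrak{A}(\mathrm{Exp}(tU_i),\cdot)$, we have $\mathfrak{a}(U_i)=\frac{d}{dt}\big|_{t=0}\Gamma^{e_i}_t$, and Proposition \ref{grafG}(ii) identifies this derivative with the gradient of $\varphi_{e_i}=e_i^*$, namely $\xi_i$.

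The only real obstacle is bookkeeping: one has to check that the identifications $\mathcal{L}$, $\mathcal{P}$ and the normalizations implicit in $B_{u,\alpha}$ (the parameter $\alpha$ is hyperbolic angle, matching the unit-speed convention $|v|=1$ in the definition of $\Gamma^v_t$) are consistent, and that the orthogonal change of basis $R_{e_i}$ used to define $\Pi_{e_i}$ intertwines $B_{e_1,t}$ with $B_{e_i,t}$. All other steps reduce to the already-cited results.
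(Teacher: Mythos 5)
Your proposal is correct and, for the substantive identity $\mathfrak{a}(U_i)=\xi_i$, follows the same route as the paper: both reduce it to Proposition \ref{grafG}, you merely spell out the intermediate bookkeeping ($\mathrm{Exp}(tU_i)=B_{e_i,t}$, Remark \ref{hyptrans}, transport by $\Pi_{e_i}$) that the paper compresses into the phrase ``according to our identifications''. The two soft parts are where you diverge. For effectiveness, the paper does not treat the statement as tautological: it restricts $\phi$ to each finite-dimensional sphere $\mathbb{H}_n\cap\Sp_{\mathbb{H}}$ using Proposition \ref{nS}, invokes effectiveness of the finite-dimensional M\"obius action there, and lets $n\to\infty$. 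Your one-line version is legitimate \emph{provided} you read $\mathfrak{M}_{HS}(\Sp_\mathbb{H})$ literally as a subgroup of $\mathrm{Diff}(\Sp_\mathbb{H})$ acting by evaluation (which Definition \ref{mobS} does permit); if instead one regards the acting group as the abstract group $SO_{HS}(\mathbb{H},1)$ transported through $\mathcal{L}$ and $\mathcal{P}$ --- which is how the identification $\mathfrak{m}_{HS}(\Sp_\mathbb{H})\cong\mathfrak{g}_\infty$ is actually used later --- then effectiveness is the injectivity of that composite homomorphism, and the paper's finite-dimensional reduction is doing real work. You should at least state which reading you adopt, since your part (2) silently uses the stronger one when it passes from ``$\mathrm{Exp}(tX)$ acts trivially'' to ``$tX=0$'' via the exponential of $\mathfrak{g}_\infty$. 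For injectivity of $\mathfrak{a}$, the paper simply cites Proposition 2.9(5) of \cite{Ro}, whereas your argument (effectiveness plus the local diffeomorphism property of $\mathrm{Exp}$ from Proposition \ref{CMMob}(4), together with the flow identity $\frac{d}{dt}\mathfrak{A}(\mathrm{Exp}(tX),z)=\mathfrak{a}(X)(\mathfrak{A}(\mathrm{Exp}(tX),z))$ to upgrade vanishing at $t=0$ to constancy) is self-contained and arguably preferable; it is the standard proof and is valid here.
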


\begin{proof}${}$\\
(1) Let  $\phi\in \mathfrak{M}_{HS}(\Sp_{\mathbb{H}})$ such that $\phi(z)=z$ for all $z\in \Sp_{\mathbb{H}}$. According to Proposition \ref{nS},  for any $n$ the restriction of $\phi$ to $\mathbb{H}_n\cap \Sp_{\mathbb{H}}$  is a  M\"obius transformation of the finite dimensional sphere $\mathbb{H}_n\cap \Sp_{\mathbb{H}}$. As in the finite dimensional case this action is effective,  the restriction of $\phi$ to $\mathbb{H}_n\cap \Sp_{\mathbb{H}}$ is the identity. Therefore  the map $\Pi\circ\phi\circ\Pi^{-1}$ from $\H$ to $\H$ is the identity on each subspace $\H_{n-1}=\H\cap\mathbb{H}_n$ for any $n\in \N$. It follows that $\Pi\circ\phi\circ \Pi^{-1}=Id_\H$ and then $\phi=Id_{\Sp_{\mathbb{H}}}$. Therefore the action $\mathfrak{A}$ is effective.\\
(2) For the injectivity of $\mathfrak{a}$,  see the proof of Proposition 2.9 of \cite{Ro} part (5). On the other hand according to our identifications, from Proposition \ref{grafG}, we get   $\mathfrak{a}(U_i)=\xi_i$.\\
\end{proof}

\subsection{On the sub-Riemannian structure on   $\mathfrak{M}_{HS}(\Sp_\mathbb{H}) $}\label{subG}${}$

Let $M$ be a Hilbert manifold and $\cal D$ a subbundle of $TM$. A {\it sub-Riemannian structure on $M$} is a triple $(M,{\cal D},g)$ where $g$ is a Riemannian metric on ${\cal D}$. Of course, given a Riemannian metric $\bar{g}$ on $M$, we get a Riemannian metric $g$ on $\cal D$ by restriction. On the other hand, there always exists a complementary ${\cal V}$ of $\cal D$, {\it i.e.}  $TM={\cal D}\oplus{\cal V}$ and so we can extend $g$ into a Riemannian metric $\bar{g}$ on $M$ in an evident way.\\

Consider any Riemannian metric  $\bar{g}$ on $M$.  A curve $\g:[0,T]\ap M$  is of class $L^1$ if we have:\\  $\dis\int_0^T\sqrt{\bar{g}(\dot{\g}(t),\dot{\g}(t))}dt<\infty$. This property does not depend on the choice of $\bar{g}$.  For such a curve $\g$, its length $l(\g)$ is precisely the quantity $\dis\int_0^T\sqrt{\bar g(\dot{\g}(t),\dot{\g}(t)}dt$  and, of course, $l(\g)$  does not depend on its parametrization.
A $L^1$- curve  is called {\it  horizontal} if $\dot{\g}(t)$ belongs to ${\cal D}(\g(t))$ .   Given any Riemannian metric $g$ on $\cal D$,   the length of an horizontal curve $\g$ is well defined. Note that we also have
\begin{eqnarray}\label{length}
l(\g)=\dis\int_0^T |\g(t)^{-1}\dot{\g}(t)|dt.
\end{eqnarray}

Given two points $x_0$ and $x_1$ in $M$, let ${\cal C}_H(x_0,x_1)$ be the set, eventually empty, of horizontal $L^1$-curves $\g:[0,T]\ap M$ such that $\g(0)=x_0$ and $\g(T)=x_1$ for some $T\geq 0$.  reparametrization,
The {\it horizontal distance} $d_H(x_0,x_1)$ between $x_0$ and $x_1$ is defined by
\begin{eqnarray}\label{Hdis}
d_H(x_0,x_1)=\dis\inf\big{\{}l(\g), \g\in {\cal C}_H(x_0,x_1)\big{\}} \textrm{ and }  d_H(x_0,x_1)=\infty \textrm{ if } {\cal C}_H(x_0,x_1)=\emptyset.
\end{eqnarray}
In the finite dimension, the infimum in (\ref{Hdis}) is always reached. Moreover, the Theorem of Chow gives sufficient conditions under which any two points of $M$ can be joined by a horizontal curve.  In this case,  $d_H$ becomes a distance.\\
In infinite dimension, as in the Riemannian case,  if  $ {\cal C}_H(x_0,x_1)\not=\emptyset$,  the infimum in (\ref{Hdis}) could  be not reached. Moreover, in this context, to our knowledge, no general result as Chow's theorem exists.  Therefore  we cannot hope that $d_H$ is a distance in a wide context.\\

We now come back to the Lie group $\mathfrak{M}_{HS}(\mathbb{S}_\mathbb{H})$. The Lie algebra $\mathfrak{m}_{HS}(\mathbb{S}_\mathbb{H})$ and $\mathfrak{g}=\mathfrak{so}_{HS}(\mathbb{H},1)=\mathfrak{h}\oplus\mathfrak{s}$  being identified and we  provide this Lie algebra with the norm $|\;|$ associated to the  inner product induced by $\dis\frac{1}{2}<\;,\;>_{HS}$. Then,  the isomorphism
$u\ap \begin{pmatrix} 0& [u]^*\\
					[u]&0\\
					\end{pmatrix}$ from $\mathbb{H}$ to $\mathfrak{h}$ is in fact an isometry.  For simplicity, the inner product on $\mathfrak{h}$ will be denoted $<\;,\;>$.   Therefore the  Hilbert subspace $(\mathfrak{h}, <\;,\;>)$ generates a left invariant distribution $\D$  and also a left  invariant Riemannian metric $g$ on $\D$  on  $\mathfrak{M}_{HS}(\Sp_\mathbb{H}) $ and then ( $\mathfrak{M}_{HS}(\Sp_\mathbb{H}) ,\D,g)$ is a sub-Riemannian structure on $\mathfrak{M}_{HS}(\Sp_\mathbb{H})$.  Given any $\phi\in\mathfrak{M}_{HS}(\Sp_\mathbb{H})$,  the {\it accessibility set} of  $\phi$ is

$${\cal A}(\phi)=\{\psi\in \mathfrak{M}_{HS}(\Sp_\mathbb{H}) \textrm{ such that there exists an horizontal curve }\g:[0,T]\ap  \mathfrak{M}_{HS}(\Sp_\mathbb{H}) \textrm{ with } \g(0)=\phi\;\; \g(T)=\psi\}.$$

On the other hand, in the Lie sub-algebra $\mathfrak{s}$ of $\mathfrak{g}$ of  $\mathfrak{M}_{HS}(\mathbb{S}_\mathbb{H})$, we consider the Banach space

$\mathfrak{s}_1=\{ P\in \mathfrak{s} \textrm{ such that } P=\dis\sum_{k,l\in I, k<l}\l_{kl}\O_{kl}, \dis\sum_{k,l\in I, k<l}|\l_{kl}|<\infty\}$

equipped with the norm $|P|_1=\dis\sum_{k,l\in I, k<l}|\l_{kl}|$. Note that $|P|_1=\dis\sum_{i\in I}|<e_i,Pe_i>|$ is the $L^1$ trace of $P$ and so $|P|_1$ does not depend on the choice of the fixed Hilbert  basis of $\mathbb{H}$. We denote by $\mathfrak{g}_1=\mathfrak{h}\oplus\mathfrak{s}_1$ equipped with the norm
$$||(B,P)||_1=|B|+|P|_1.$$
Of course, the natural inclusion of $\mathfrak{g}_1$ in $\mathfrak{g}$ is continuous,  and the family $\{U_i\}_{i\in I}\cup\{\O_{kl}\}_{k,l\in I,k<l}$ is a Schauder basis of $\mathfrak{g}_1$. We denote by $\mathfrak{M}^1_{HS}(\Sp_\mathbb{H})=\mathrm{Exp}(\mathfrak{g}_1)$. Then, it is clear that  $\mathfrak{M}^1_{HS}(\Sp_\mathbb{H})$ has a structure of a Banach Lie group modeled on $\mathfrak{g}_1$. Moreover, $\mathfrak{M}^1_{HS}(\Sp_\mathbb{H})$ is dense in $\mathfrak{M}_{HS}(\Sp_\mathbb{H})$. According to the terminology of weak submanifold of a Banach manifold (cf \cite{PeSa}), we will say that $\mathfrak{M}^1_{HS}(\Sp_\mathbb{H})$ is a weak Lie subgroup of $\mathfrak{M}_{HS}(\Sp_\mathbb{H})$. Then, we have:

\begin{The}\label{subR}${}$
\begin{enumerate}
\item[(i)] Any two elements $A_0$ and $A_1$ of  $\mathfrak{M}^1_{HS}(\Sp_\mathbb{H})$ can be joined by a horizontal curve.

\item[(ii)] $d_H$ is a distance on $\mathfrak{M}^1_{HS}(\Sp_\mathbb{H})$.
\end{enumerate}
\end{The}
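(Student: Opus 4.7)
The plan is to prove (i) and deduce (ii) from it. The strategy rests on left-invariance of the horizontal distribution $\Delta$ on $\mathfrak{M}^1_{HS}(\Sp_\mathbb{H})$ combined with a Chow-type commutator construction that exploits the bracket relations $[U_i,U_k]=\Omega_{ik}$ from (\ref{lierelations}) and the $\ell^1$-regularity built into the definition of $\mathfrak{g}_1$.

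First I reduce (i) to the case $A_0=\mathrm{Id}$: since $\Delta$ is left-invariant and $\mathfrak{M}^1_{HS}(\Sp_\mathbb{H})$ is a subgroup, a horizontal curve $\gamma$ from $\mathrm{Id}$ to $A_0^{-1}A_1$ translates to the horizontal curve $A_0\gamma$ from $A_0$ to $A_1$, and $A_0^{-1}A_1$ still lies in $\mathfrak{M}^1_{HS}(\Sp_\mathbb{H})$. Fixing $A\in\mathfrak{M}^1_{HS}(\Sp_\mathbb{H})$, I apply Theorem \ref{prodexp} to write $A = R\cdot\mathrm{Exp}(U)$, where $U\in\mathfrak{h}$ and $R=\prod_{j\in J}\mathrm{Exp}(\theta_j B_j)$ is a product of pairwise commuting rank-two plane rotations with $B_j\in\mathfrak{s}$ and $0<\theta_j\leq\pi$. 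The boost factor is easy: $t\mapsto\mathrm{Exp}(tU)$ is horizontal with length $|U|$, since its left-translated velocity is $U\in\mathfrak{h}$.

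The delicate part is to reach $R$ horizontally. Each single-plane rotation $\mathrm{Exp}(\theta_j B_j)$ is realized as the endpoint of an exact Chow-type commutator loop built from four $U$-segments: this uses $[U_i,U_k]=\Omega_{ik}$ together with the geometric identification in Proposition \ref{grafG}(iii) of the bracket flow with a plane rotation, rescaled (or iterated) to hit the target angle precisely. Because the $B_j$'s commute, these loops can be concatenated in any order. The main obstacle, and the precise reason for restricting to the weak Lie subgroup $\mathfrak{M}^1_{HS}(\Sp_\mathbb{H})$ modeled on $\mathfrak{g}_1=\mathfrak{h}\oplus\mathfrak{s}_1$ with its $\ell^1$-norm, is the length bookkeeping: one must establish an estimate of the form $\mathrm{length}(\text{$j$-th loop})\leq C|\theta_j|$ via a careful Baker--Campbell--Hausdorff analysis of the commutator construction together with the spectral identification linking $(\theta_j)$ to the coefficients $(\lambda_{kl})$ of $Q=\mathrm{Log}\,R\in\mathfrak{s}_1$. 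The $\ell^1$-summability $\sum|\lambda_{kl}|<\infty$ then forces the total horizontal length of the concatenation to be finite, and this is the step where all the analytic content of the theorem is concentrated.

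Part (ii) then follows readily. Symmetry of $d_H$ comes from reversing curves, the triangle inequality from concatenation, and finiteness from (i). For non-degeneracy, extend the inner product $\langle\,,\,\rangle$ on $\mathfrak{h}$ to a full left-invariant Riemannian inner product $\bar g$ on $\mathfrak{g}_1$ by choosing any compatible inner product on $\mathfrak{s}_1$ (for instance, the $\ell^2$-completion of the $\Omega_{kl}$-basis); this yields a Riemannian distance $d_{\bar g}$ satisfying $d_{\bar g}\leq d_H$ because the $\bar g$-length of a horizontal curve equals its horizontal length, and since $d_{\bar g}$ separates points, $d_H(A_0,A_1)=0$ forces $A_0=A_1$.
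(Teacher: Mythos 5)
Your overall architecture coincides with the paper's: reduce to $A_0=Id$ by left invariance, split $A=\prod_{j}\textrm{Exp}(\theta_jB_j)\,\textrm{Exp}(U)$ via Theorem \ref{prodexp}, dispose of the boost factor by the straight horizontal segment $t\mapsto\textrm{Exp}(tU)$, and close the infinite concatenation using the $\ell^1$-condition defining $\mathfrak{g}_1$. Your argument for (ii) is also essentially the paper's (the paper bounds $d_H$ from below by the Cameron--Martin distance $d$ of Theorem \ref{Liealgresult} instead of building an auxiliary Riemannian metric, but the two devices are interchangeable). The divergence is in the one step that carries all the weight: how to reach each rotation factor $\textrm{Exp}(\theta_jB_j)$ horizontally with controlled length. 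The paper does not use commutator loops at all: it restricts to the finite-dimensional block $SO(\mathbb{E}_j,1)$, applies the classical Chow theorem there for bare existence, and then, for the length, decomposes $B_j$ into $2\times 2$ rotation blocks and imports the explicitly computed horizontal geodesics of $SU(1,1)$ from Grong--Vasil\`ev (Proposition \ref{SU11}), concatenating them in Lemma \ref{bong} into a curve of length $n_j|\theta_j|$.

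The gap in your proposal is exactly the estimate you yourself flag as carrying ``all the analytic content'': that the $j$-th loop has length at most $C|\theta_j|$. You assert it but do not prove it, and the mechanism you propose cannot produce it. A four-segment commutator loop with parameter $s$ in the directions $U_i,U_k$ has horizontal length $4s$ and produces a displacement of order $s^2$ in the $\Omega_{ik}$-direction (plus error terms back in the $U$-directions, since $[U_i,[U_i,U_k]]=U_k\neq 0$, so the loop is not ``exact''); reaching the angle $\theta_j$ therefore costs about $4\sqrt{|\theta_j|}$, and subdividing into $n$ smaller loops only worsens this to $4\sqrt{n|\theta_j|}$. This square-root scaling is not an artifact of the construction: for a step-two distribution the ball--box theorem gives the lower bound $d_H(Id,\textrm{Exp}(\theta\,\Omega_{ik}))\geq c\sqrt{|\theta|}$ for small $\theta$. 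Consequently $\sum_j n_j|\theta_j|<\infty$ does not control $\sum_j n_j\sqrt{|\theta_j|}$ (take $\theta_j=j^{-2}$, $n_j=1$), your concatenation is not shown to be of class $L^1$, and the argument does not close along this route. To repair it you must replace the Baker--Campbell--Hausdorff heuristic by the exact horizontal-distance information on $SU(1,1)$ that the paper uses in Lemma \ref{bong} --- and you should in any case confront the linear bound asserted there with the square-root small-angle asymptotics forced by the ball--box theorem, since this is precisely the point where the choice of norm on $\mathfrak{s}_1$ and the sub-Riemannian geometry have to be matched.
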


\begin{proof}${}$\\
According to the construction of $\mathfrak{M}_{HS}(\mathbb{S}_{\mathbb{H}})$, we can assume that $\mathfrak{M}^1_{HS}(\Sp_\mathbb{H})=SO^{1}_{HS}(\mathbb{H},1)\subset SO_{HS}(\mathbb{H},1)$. On the other hand,
it is sufficient to prove part (i) for $A_0=Id$ and $A_1$ any point of $\mathfrak{M}^1_{HS}(\Sp_\mathbb{H})$. Fix some
$ A\in\mathfrak{M}^1_{HS}(\Sp_\mathbb{H})$.
According to Theorem \ref{prodexp}, we have $A=\dis\prod_{j\in J}\textrm{Exp}(\theta_j B_j)\textrm{Exp U}$  where each $B_j$ is a finite rank element of $\mathfrak{s}$ and $U\in \mathfrak{h}$. Therefore  the curve $t\ap \textrm{Exp}(tU)$ is a horizontal curve  defined on $[0,1]$ which joins $Id$ to  $\textrm{Exp}(U)$. It is sufficient to prove the result for  $A=\dis\prod_{j\in J}\textrm{Exp}(\theta_j B_j)$. We denote by $A_j=\textrm{Exp}(\theta_j B_j)$ and  by ${\cal B}$ a  matrix consisting of blocks $\bar{\cal B}_j=\theta_j B_j$ in restriction to $E_j$. Fix such a point $A_j$.  By construction of  $B_j$ (cf Appendix \ref{A1}), if we set  $\mathbb{E}_j=B_j(\mathbb{H})$, then $\mathbb{E}_j$   is a finite dimensional Hilbert space such that $\ker (B_j)=(\mathbb{E}_j)^\perp$. It follows that   
$\bar{B}_j={B_j}_{| \mathbb{E}_j}$
 belongs to $\mathfrak{so}(\mathbb{E}_j,1)$. Moreover, we also have a decomposition $\mathfrak{so}(\mathbb{E}_j,1)=\mathfrak{h}_j\oplus \mathfrak{s}_j$ and $\bar{B}_j$ belongs to $\mathfrak{s}_j$.  In the basis of $\mathbb{E}_j$ built in Appendix \ref{A1},  the Lie algebra  $\mathfrak{so}(\mathbb{E}_j,1)$ is generated by $\{\bar{U}_{r}={U_{rs}}_{|\mathbb{E}_j}, \; l_1\leq 2l_j\}$ and $\{\bar{\O}_{rs}={\O_{rs}}_{| \mathbb{E}_j}, l_1\leq r<s\leq l_j\}$.  Consider the  (left-invariant) sub-Riemannian structure  on ${SO}(\mathbb{E}_j,1)$
 generated by $\mathfrak{h}_j$, provided with   inner product such that $\{\bar{U}_r,\;\; r=1,\cdots n_j\}$ is orthonormal basis. According to the classical Chow theorem, there is a horizontal curve $\bar{\g}_j:[0,T_j]\ap {SO}(\mathbb{E}_j,1)$ such that $\bar{\g}_j(0)=Id_{\mathbb{E}_j}$ and $\bar{\g}_j(T_j)=\bar{A}_j={A_j}_{| \mathbb{E}_j}$. Consider  $\g_j:[0,T_j]\ap {SO}_{HS}(\mathbb{H} ,1)$ defined by $\g_j(t)_{|\mathbb{E}_j}=\bar{\g}_j(t) \; \textrm{ and } \g_j(t)_{|(\mathbb{E}_j)^\perp}=Id_{|(\mathbb{E}_j)^\perp}$.

Then $\g_j$ is a horizontal curve which joins $Id_{\mathbb{H}}$ to $A_j$.

If $J$ is finite, we can assume that $J=\{1,\cdots,N\}$ otherwise, we can assume that $J=\N.$\
We parameterize $\g_j$ into a curve $c_j$ on $[\t_{j-1},\t_j]$ by setting $c_j(s)= \g_j(s-\t_{j-1})$.

For each integer $n\in J$, consider the finite  composition $ C_n:[0,\t_n]\ap {SO}_{HS}(\mathbb{H} ,1)$  inductively defined by
$C_n(s)=C_{n-1}(s) \textrm{ for } s\in [0,\t_{n-1}]$,

$C_n(s)=c_n(s) C_{n-1}(\t_{n-1})  \textrm{ for } s\in[\t_{n-1},\t_n]$.

 Then $C_n$ is a $L^1$ horizontal  curve  which joins $Id_{\mathbb{ H}}$ to $\dis\prod_{j=1}^n A_j$. Therefore  if $J$ is finite the proof is complete. Assume now that  $J=\N$. We set $\t=\dis\lim_{n\ap \infty}\t_n$ if this limit is finite otherwise we set $\t=\infty$. We must show that $\lim_{n\ap \infty}C_n(s)$ is well defined for all $s\in [0,\t]$. At first, as $A=\lim_{n\ap \infty}\dis\prod_{j=1}^nA_j$ we have then
 $\lim_{n\ap \infty}C_n(\t_n)=A$. But, by construction,  for each $m>n$, we have ${C_m}_{| [0,\t_n]}=C_n$. So, for any $s\in [0,\t[$ there exists $n$ such that $s\in[0,\t_n]$ and so $C(s)=C_n(s)$ is well defined. Of course, such  a construction is differentiable almost every where but  without a good choice for each curve $\bar{\g}_j$, in general, $\t=\infty$ and even if $\t$ is finite, $C$ is  not of class $L^1$ and in particular, we can have

 $\dis\lim_{n\ap\infty}\int_0^{\t_n}\sqrt{g(\dot{C}_n(s),\dot{C}_n(s))}ds=\infty$.\\

 To end this proof, we will use the  results about the sub-Riemannian structure of $SU(1,1)$ to get the following Lemma.

 \begin{Lem}\label{bong} For each $j$, with the previous notations, we can choose an horizontal curve $\bar{\g}_:[0,T_j]\ap {SO}(\mathbb{E}_j,1)$  arc-length parametrized such that
 $$\bar{\g}_j(0)=Id_{\mathbb{E}_j},\;\;\;\bar{\g}_j(T_j)=\bar{A}_j,\;\textrm{ and } l(\bar{\g}_j)=n_j |\theta_j|=T_j.$$
 \end{Lem}

 For each $j\in J$,  $C_{n}(\t_{n})=\dis\prod_{j=1}^{n}A_j$.   Therefore , by construction of the family $A_j$,  the endomorphism $C_{n}(\t_{n})$  is an isometry of $\mathbb{H}$ which preserves the space
 $\mathbb{K}\oplus \mathbb{E}_1\oplus\cdots,\oplus \mathbb{E}_{n}$. Since $\g_{n+1}$ is arc-length parametrized  we have:
 \begin{eqnarray}\label{lengthCn}
\dis\int_{\t_n}^{\t_{n+1}}\sqrt{g(\dot{C}(s),\dot{C}(s))}ds=\dis\int_{0}^{T_{n+1}}\sqrt{g(\dot{\g}_{n+1}(s),\dot{\g}_{n+1}(s)) }ds=T_j=n_j|\theta_j|
\end{eqnarray}

 But  from the decomposition of ${\cal B}$,
 $$|{\cal B}|_1=2\dis\sum_{j\in J}n_j|\theta_j|.$$
and, according to (\ref{lengthCn}) and the construction of $C$ we have then
$$l(C)=\dis\int_{0}^\t\sqrt{g(\dot{C}(s),\dot{C}(s))}ds=\dis\sum_{j\in J}\dis\int_{\t_{j_1}}^{\t_j}\sqrt{g(\dot{C}(s),\dot{C}(s))}ds=\dis\frac{1}{2}|{\cal B}|_1.$$
This ends the proof of Part (i).\\

 From the definition of the distance $d$ on $\mathfrak{M}_{HS}(\Sp_\mathbb{H})$,  for any $\psi$ and $\psi'$ in  $\mathfrak{M}^1_{HS}(\Sp_\mathbb{H})$ we have $d_H(\psi,\psi')\geq d(\psi,\psi')$. As from part (i) the restriction of $d_H$ to $\mathfrak{M}^1_{HS}(\Sp_\mathbb{H})$ it follows easily that $d_H$ is a distance and then Part (ii) is proved.\\

\end{proof}

\section{Control problem of a Hilbert snake and accessibility sets  }\label{HilbSn}
\subsection{The configuration space}${}$\\
Again in this section,   the Hilbert  basis $\{ e_{i}\}_{i\in \mathbb{N}} $ in  $\mathbb{H}$ is fixed.\\
A curve $\gamma :\left[ a,b\right] \rightarrow \mathbb{H}$ (not necessary continuous) is  called ${C}^{k\text{ }}$-piecewise if there exists a finite set\\ $\mathcal{P}=\left\{ a=s_{0}<s_{1}<...<s_{N}=b\right\} $ such that, for all $i=0,...,N-1$,
the restriction of  $\gamma$ to the interval $]s_{i},s_{i+1}[$ can be extended to a curve  of class ${C}^{k}$ on the closed interval
$\left[ s_{i},s_{i+1}\right] $.
Given any metric space $(X,d)$  and partition $\mathcal{P}$=$
\left\{ a=s_{0}<s_{1}<...<s_{N}=b\right\} $ of $[a,b]$,
 let  $\mathcal{C}_{\mathcal{P}}^k\left( \left[
a,b\right] ,X\right)$ be  the set of curves $u:[0,L]\ap X$
 which are $C^k$-piecewise relatively to $\mathcal{P}$ for $k\in \N$ and equipped with the distance
 $$\d(u_1,u_2)=\dis\sup_{t\in [0,L]} d(u_1(t),u_2(t)).$$ Note that, if ${ \cal P}=\{0,L\}$ and if $X$ is a submanifold of $\mathbb{H}$, then $\mathcal{C}_{\mathcal{P}}^k\left( \left[
a,b\right] ,X\right)$ is the space of continuous ${\cal C}^k([0,L],X)$  curves from $[0,L]$ to $X$   of class $C^k$ , and as in finite dimension,  we have a natural structure of Banach manifold on  ${\cal C}^k([0,L],X)$.\\

\noindent{\it Throughout this paper, we fix a real number $L>0$  and  $\mathcal{P}$ is a given fixed partition of $[0,L]$.} \\

{\bf A Hilbert snake} is a continuous  piecewise $C^1$-curve $S : [0,L] \ap  \field{H}$, such that $||\dot{S}(t)||=1$ and S(0) = 0.
\noindent In fact, a snake is characterized by $u(t)=\dot{S}(t)$ and of course we have $S(t)=\dis\int_0^tu(s)ds$  where $u:[0,L] \ap  \field{S}^\infty$ is a piecewise $C^0$-curve associated to the partition $\cal P$. Moreover, this snake is affine  if and only if $u$ is constant on each subinterval of $\cal P$.
The set
${\cal C}^L_\mathcal{P}={\mathcal{C}}_{\mathcal{P}}^0\left(
\left[ 0,L\right] ,
\mathbb{S}_\mathbb{H}\right) $
is called the {\bf configuration space of the snake}  in $\mathbb{H}$ of length $L$ relative to the partition $\cal P$.

The map $ u\mapsto(u\mid_{[s_{0},s_{1}]},...,u\mid_{[s_{i},s_{i+1}]},u\mid_{[s_{N-1},s_{N}]})$ is an homeomorphism between  ${\cal C}^L_{\cal P} $ and\\ $\prod_{i=0}^{N-1}{\cal C}^{0}([s_{i},s_{i+1}],\field{S}^{\infty})$.  Moreover, this map  permits to put on  ${\cal C}^L_\mathcal{P}$  a structure of Banach manifold  diffeomorphic to the Banach product structure $\prod_{i=0}^{N-1}{\cal C}^{0}([s_{i},s_{i+1}],\field{S}^{\infty})$.

\noindent The tangent space $T_u{\cal C}^L_{\cal P}$
    can be identified with the set
$$\{ v\in {\cal C}^0_{\cal P}([0,L],\field{H}) \textrm { such that } <u(s),v(s)>=0 \textrm{ for all } s\in [0,L]\}.$$
 This space is naturally provided with two non equivalent norms

 the natural  $||\;||_\infty$

  the $||\;||_{L^2}$ associated to the inner product
 $<v,w>_{L^2}=\dis\int_{0}^L<v(s),w(s)>ds$.

\subsection{The horizontal distribution associated to a Hilbert snake}\label{hori}${}$\\
For any $u\in {\cal C}^L_{\cal P}$  the {\bf Hilbert snake} associated to $u$ is the map $S_u:[0,L]\ap \field{H}$ defined  by
\begin{center}$S_u(t)=\dis\int_0^tu(s)ds$.  The {\bf endpoint map}:
${\cal E}:  {\cal C}^L_{\cal P} \ap \field{H}$ defined by
$u\ap S_u(L)$
\end{center}
 is smooth and we have $
T_u{\cal E}(v)=\dis\int_0^Lv(s)ds$.

Let ${\cal D}_u$  be the orthogonal of $\ker T_u{\cal E}$ (for the inner product $<\;,\;>_{L^2}$ on $T_u{\cal C}^L_{\cal P}$). Then we have the decomposition
 $$T_u{\cal C}^L_{\cal P}={\cal D}_u\oplus \ker T_u{\cal E}$$
and the restriction of $T_u{\cal E}$ to ${\cal D}_u$ is a continuous  injective morphism into $\field{H}$.
 The family $u \mapsto {\cal D} _u$ is a (closed) distribution on ${\cal C}^L_{\cal P}$ called  the {\bf  horizontal distribution},
and each vector field $X$ (resp. curve)  on ${\cal C}^L_{\cal P}$ which is tangent to ${\cal D}$ is called a {\bf horizontal vector field} (resp. {\bf horizontal curve}).

The inner product on $\field{H}$ gives rise to a Riemannian  metric $g$ on $T\field{H}\equiv \field{H}\times\field{H}$ given by $g_x(u,v)=<u,v>$.
Let $\phi:\field{H}\ap \R$ be a smooth function. The usual gradient  of $\phi$ on $\field{H}$ is the vector field
$$\mathrm{grad}(\phi)=(g^\flat)^{-1}(d\phi),$$
where $g^\flat$ is the canonical isomorphism of bundle  from $T\field{H}$ to its dual bundle $T^*\field{H}$, corresponding to the Riesz representation i.e.
$g^\flat(v)(w)=<v,w>$. Thus  $\mathrm{grad}(\phi)$ is characterized by:
\begin{eqnarray}\label{grad}
 g( \mathrm{grad}(\phi),v)=<\mathrm{grad}(\phi),v>=d\phi(v),
\end{eqnarray}
for any $v\in \field{H}$.

In the same way, to the inner product on   $T{\cal C}^L_{\cal P}$ (previously defined), is associated a   {\it weak} Riemannian metric ${G}$ and we cannot define in the same way the gradient of any smooth function on  ${\cal C}^L_{\cal P}$. However, let  \\${G}^\flat:T{\cal C}^L_{\cal P}\ap T^*{\cal C}^L_{\cal P}$ be the morphism bundle  defined by:
$${G}_u^\flat(v)(w)={G}_u(v,w)$$
for any $v$ and $w$ in $T_u{\cal C}^L_{\cal P}$.  Given any smooth function $\phi: \mathbb{H}\ap \R$,
then $\ker d(\phi\circ {\cal E})$  contains $\ker T{\cal E}$  and so belongs to ${G}_u^\flat(T_u{\cal C}^L_{\cal P})$. Moreover,
\begin{eqnarray}
\nabla\phi=({G}^\flat)^{-1}(d(\phi\circ{\cal E}))
\end{eqnarray}
 is tangent to ${\cal D}_u$, and we have
\begin{eqnarray}\label{expnabla}
\nabla\phi(u)(s)=\mathrm{grad}(\phi)({\cal E}(u))-<\mathrm{grad}(\phi)({\cal E}(u)),u(s)>u(s).
\end{eqnarray}

\noindent The vector field  $\nabla\phi$ is called  {\bf horizontal gradient} of $\phi$.

\noindent To each vector $x\in \field{H}$, we can associate the linear form $x^*$ such that $x^*(z)=<z,x>$. This implies that the horizontal gradient  $\nabla x^*$ is well defined. In particular,

\begin{Obs}\label{Ei}${}$\\
 To each vector  $e_i$, $i\in \N$, of the Hilbert basis, we can associate the horizontal vector field $E_i=\nabla e_i^*$.
  in fact,  the family $\{E_i\}_{i\in\N}$ of vector fields generates the distribution ${\cal D}$.
\end{Obs}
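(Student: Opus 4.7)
The plan is to prove both parts of the observation by direct computation from formula (\ref{expnabla}), combined with the characterization of $\mathcal{D}_u$ as the $L^2$-orthogonal of $\ker T_u\mathcal{E}$.

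For the first part, since $e_i^*$ is already a continuous linear form on $\mathbb{H}$, its usual gradient in the ambient Riemannian structure of $\mathbb{H}$ is the constant vector field $\mathrm{grad}(e_i^*)(x) = e_i$. Substituting $\phi = e_i^*$ into (\ref{expnabla}) gives the explicit expression
$$ E_i(u)(s) = e_i - \langle e_i, u(s)\rangle u(s) = e_i - u_i(s)\,u(s),$$
which incidentally matches the expression (\ref{xi}) for $\xi_i$ once the usual identifications are made. To confirm that $E_i(u)$ really lies in $T_u\mathcal{C}^L_\mathcal{P}$, I would check pointwise that $\langle E_i(u)(s), u(s)\rangle = u_i(s) - u_i(s)|u(s)|^2 = 0$, using $|u(s)|=1$; and the horizontality of $E_i(u)$ (tangency to $\mathcal{D}_u$) is the general property of $\nabla\phi$ recorded just before the observation.

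For the second assertion, the correct interpretation in this infinite-dimensional Banach manifold setting is that the $L^2$-closed linear span of $\{E_i(u)\}_{i\in\N}$ coincides with $\mathcal{D}_u$. Since each $E_i(u)$ already lies in $\mathcal{D}_u$, it suffices to show that any $v\in\mathcal{D}_u$ which is $L^2$-orthogonal to every $E_i(u)$ must vanish. Using the pointwise orthogonality $\langle v(s),u(s)\rangle = 0$ valid for $v\in T_u\mathcal{C}^L_\mathcal{P}$, the scalar products unfold as
$$\langle v, E_i(u)\rangle_{L^2} = \int_0^L v_i(s)\,ds - \int_0^L u_i(s)\langle v(s),u(s)\rangle\,ds = \int_0^L v_i(s)\,ds,$$
which is exactly the $i$-th coordinate of $T_u\mathcal{E}(v)=\int_0^L v(s)\,ds$. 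If this vanishes for every $i\in\N$, then $T_u\mathcal{E}(v)=0$, so $v\in\ker T_u\mathcal{E}$; combined with $v\in\mathcal{D}_u=(\ker T_u\mathcal{E})^\perp$, this forces $v=0$.

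The only conceptually delicate point, which I regard as the main obstacle, is the meaning of \emph{generates}: the finite linear combinations of $\{E_i(u)\}_{i\in\N}$ do not exhaust $\mathcal{D}_u$ algebraically and $\{E_i(u)\}$ is not an orthonormal basis of it. The statement one can (and needs to) prove is precisely the $L^2$-density above, which is the natural notion for a closed distribution on a Banach manifold and is exactly what the short orthogonality argument yields.
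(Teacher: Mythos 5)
Your computations are correct, and this is essentially the argument the paper intends: Observation \ref{Ei} is stated without proof, as an immediate consequence of formula (\ref{expnabla}) and of the definition of $\mathcal{D}_u$ as the $L^2$-orthogonal of $\ker T_u\mathcal{E}$ (the details being deferred to \cite{PeSa}). Your explicit formula $E_i(u)(s)=e_i-u_i(s)u(s)$, the pointwise tangency check, and the identity $\langle v,E_i(u)\rangle_{L^2}=\langle T_u\mathcal{E}(v),e_i\rangle$ are exactly right, and the last identity is the heart of the matter; you are also right that ``generates'' must be read as density of the span in $\mathcal{D}_u$.

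One step deserves more care. From ``$v\in\mathcal{D}_u$ and $v\perp_{L^2}E_i(u)$ for all $i$ implies $v=0$'' you conclude that the closed span of the $E_i(u)$ is all of $\mathcal{D}_u$. That implication rests on the projection theorem and therefore requires $\mathcal{D}_u$ to be complete for $\langle\;,\;\rangle_{L^2}$; this is not automatic, because $\langle\;,\;\rangle_{L^2}$ is only a \emph{weak} Riemannian metric on $T_u\mathcal{C}^L_{\mathcal{P}}$ (the full tangent space is not $L^2$-complete, and in an incomplete inner product space a subspace with trivial orthogonal complement need not be dense). The missing ingredient is supplied by Proposition \ref{sub}: on $\mathcal{D}_u$ the $L^2$ inner product is equivalent to the one transported from $\mathbb{H}$ by $\rho_u$, so $\mathcal{D}_u$ is a genuine Hilbert space and your orthogonality argument closes, at least at regular points of $\mathcal{E}$. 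Alternatively, one can bypass the issue by observing that $z\mapsto z-\langle z,u(\cdot)\rangle u(\cdot)$ is a bounded linear map of $\mathbb{H}$ into $T_u\mathcal{C}^L_{\mathcal{P}}$ sending $e_i$ to $E_i(u)$, so the closed span of the $E_i(u)$ contains the image of this map, which \cite{PeSa} identifies with $\mathcal{D}_u$. Either way the statement holds; your write-up should just cite one of these facts where you pass from ``trivial orthogonal complement'' to ``dense span.''
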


\subsection{Set of critical values and set of singular points of the endpoint map}\label{sing}${}$\\

As the continuous linear map $T_u{\cal E}: T_u{\cal C}^L_{\cal P}\ap T_{{\cal E}(u)}\field{H}\equiv \field{H}$ is closed
it follows that $\rho_u=T_u{\cal E}_{|{\cal D}_u}$  is  an isomorphism from ${\cal D}_u$ to  the closed subset $\r_u({\cal D}_u)$ of $\field{H}$.

Consider the decompositions $z=\dis\sum_{i\in \N}z_ie_i$ and $u(s)=\dis\sum_{i\in \N}u_i(s)e_i$.
Then $u$ is singular if and only if (cf \cite{PeSa}):
\begin{eqnarray}\label{sing2}
Lz_i=\dis\sum_{j\in \N}\int_0^Lu_i(s)u_j(s)z_jds\; \forall i\in \N.
\end{eqnarray}
Let $\G_u$ be the endomorphism  defined by matrix of general term $ (\int_0^Lu_i(s)u_j(s)ds)$. Note that $\G_u$ is self-adjoint. The endomorphism $A_u=L.Id-\G_u$ is also self-adjoint and, in fact,
 its matrix  in the basis $\{e_i\}_{i\in \N}$ is $ (L\d_{ij}-\int_0^Lu_i(s)u_j(s)ds)$. It follows that  (\ref{sing2}) is equivalent to
\begin{eqnarray}\label{sing3}
A_u(z)=0.
\end{eqnarray}
 Finally,  $u$ is a singular point if and only if $L$ is an eigenvalue of $\G_u$ and also  if and only if  the vector space generated by $u([0,L])$ is $1$-dimensional. \\

The image of $\cal E$ is the closed ball $B(0,L)$ in $\field{H}$ and  {\bf set of critical values} of $\cal E$ is the union of spheres $S(0,L_j)$ for $j=1,\cdots n$  with $0\leq L_j\leq L$.

Finally we obtain the following result (\cite{PeSa}):

\begin{Pro}\label{sub}${}$
\begin{enumerate}
\item The set  ${\cal R}({\cal E})$ (resp. ${\cal V({\cal E})}$) of regular values (resp. points) of $\cal E$ is an open dense subset of ${\cal C}^L_{\cal P}$ (resp. $\field{H}$).
\item
 For any $u\in{\cal R}({\cal E})$  the linear map $\r_u:{\cal D}_u\ap \{{\cal E}(u)\}\times \field{H}$   is an isomorphism, and  on ${\cal D}_u$,  the inner product induced by $<\;,\;>_{L^2}$ and the inner product defined  $\r_u$ from $\field{H}$ are equivalent.  Moreover the distribution  ${\cal D}_{|{\cal R}({\cal E})}$ is  a trivial  Hilbert  bundle over ${\cal R}({\cal E})$
 which  is isometrically  isomorphic  to $T\field{P}^\infty$.
\end{enumerate}
\end{Pro}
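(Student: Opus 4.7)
The whole proposition can be deduced from an explicit description of ${\cal D}_u$ and $\r_u$ in terms of the self-adjoint operator $A_u=L\cdot Id-\G_u$ on $\field{H}$ appearing in (\ref{sing2}) and (\ref{sing3}). The operator $\G_u$ is positive self-adjoint, trace-class with trace equal to $L$, and hence compact; its eigenvalues lie in $[0,L]$, and $L$ belongs to its spectrum if and only if $\G_u$ has rank one, i.e.\ if and only if $u$ is piecewise constant on ${\cal P}$ with values in $\{e,-e\}$ for some unit vector $e$ of $\field{H}$.

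For part 1, I would first establish the openness of ${\cal R}({\cal E})$ by showing that its complement, the singular set, is closed for the sup norm. Given a convergent sequence of singular configurations $u_n\to u$, one reads off the unit vector $e_n$ from $u_n$ on the first piece of ${\cal P}$; then $e_n$ converges to a unit vector $e$, and the limit $u$ is itself piecewise constant in $\{e,-e\}$, hence singular. Density is immediate by perturbing any singular $u$ on an arbitrarily small sub-interval so as to rotate it slightly out of the one-dimensional subspace $\R\cdot e$. The statement for ${\cal V}({\cal E})$ then follows from the fact recalled from \cite{PeSa} that the set of critical values is a finite union of spheres $S(0,L_j)$, whose complement is open and dense in infinite-dimensional $\field{H}$.

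For part 2, I would use the horizontal gradient formula (\ref{expnabla}) and Observation \ref{Ei} to write any element of ${\cal D}_u$ as $v_A(s)=A-\langle A,u(s)\rangle u(s)$ with $A\in\field{H}$, and then compute $\r_u(v_A)=\int_0^L v_A(s)\,ds=A_u(A)$. For $u\in{\cal R}({\cal E})$ the operator $A_u$ is invertible, which makes $A\mapsto v_A$ a continuous linear bijection from $\field{H}$ onto ${\cal D}_u$ and $\r_u$ a continuous bijection of Banach spaces; the open mapping theorem then promotes $\r_u$ to a topological isomorphism. A direct integration gives
\begin{equation*}
\langle v_A,v_B\rangle_{L^2}=\langle A,A_u B\rangle\qquad\text{and}\qquad \langle \r_u(v_A),\r_u(v_B)\rangle=\langle A,A_u^2 B\rangle,
\end{equation*}
so the equivalence of the two induced inner products on ${\cal D}_u$ reduces to a two-sided bound on $A_u$; the estimate $0<L-\lambda_{\max}(\G_u)\leq A_u\leq L\cdot Id$ is available pointwise because $\G_u$ is compact with spectrum in $[0,L)$ for regular $u$.

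Finally, the map $\Phi:{\cal R}({\cal E})\times\field{H}\to{\cal D}_{|{\cal R}({\cal E})}$ defined by $(u,A)\mapsto v_A$ is a smooth bundle trivialization, and composing fiberwise with $\r_u$ identifies each fiber with the $\field{H}$-fiber at ${\cal E}(u)$, which yields the announced triviality of the Hilbert bundle modeled on the tangent space to projective Hilbert space. The main obstacle I anticipate is the pointwise nature of the equivalence of norms: the constants degenerate as $u$ approaches the singular locus, so the equivalence is only pointwise and cannot be made uniform over ${\cal R}({\cal E})$, a fact that should be recorded when this result is invoked later.
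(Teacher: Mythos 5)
The paper itself offers no proof of this proposition --- it is recalled from \cite{PeSa} --- so there is no internal argument to measure yours against; on its own terms your reconstruction is correct and is precisely the proof that the surrounding text is set up to support. The key identities all check out: with $v_A(s)=A-\langle A,u(s)\rangle u(s)$ one has ${\cal D}_u=\{v_A: A\in\field{H}\}$ by (\ref{expnabla}) and Observation \ref{Ei}, $\r_u(v_A)=A_u(A)$ with $A_u=L\cdot Id-\G_u$ as in (\ref{sing3}), $\langle v_A,v_B\rangle_{L^2}=\langle A,A_uB\rangle$ and $\langle \r_u(v_A),\r_u(v_B)\rangle=\langle A,A_u^2B\rangle$; since $\G_u$ is positive and trace class with trace $L$, it is compact, $L\notin\sigma(\G_u)$ exactly on the regular set, and the resulting bound $0<L-\lambda_{\max}(\G_u)\le A_u\le L\, Id$ gives both the invertibility of $\r_u$ and the (pointwise, non-uniform) equivalence of the two inner products, as you say. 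The closedness-of-the-singular-set argument for openness and the sphere description of the critical values for ${\cal V}({\cal E})$ are likewise sound. Two details deserve one more line each: the density perturbation must remain piecewise continuous \emph{relative to the fixed partition} $\cal P$, so the rotation out of $\R\cdot e$ should be implemented with a continuous bump supported in a single subinterval (e.g.\ $u_\e(s)=\cos(\e f(s))u(s)+\sin(\e f(s))w$ with $w\perp e$), not by an arbitrary modification of $u$ on a small set; and the final clause on the isometric identification with $T\field{P}^\infty$ is only glossed in your write-up --- your trivialization $(u,A)\mapsto v_A$ establishes triviality, but the isometry requires composing with $\r_u$ (equivalently renormalizing by $A_u$) as you briefly indicate, and should be made explicit. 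Your closing caveat that the equivalence constants degenerate as $u$ approaches the singular locus is accurate and worth recording.
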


\subsection{Accessibility  results for a Hilbert snake}\label{result}${}$\\
Recall that  given any continuous  piecewise $C^k$-curve $c:[0,T]\ap \field{H}$, a  lift of $c$ is a continuous piecewise $C^k$-curve $\g: [0,T] \ap {\cal C}^L_{\cal P}$ such that
${\cal E}(\g(t))=c(t)$. Thus, for a Hilbert snake we can consider the following optimal  control problem :

Given any  continuous piecewise $C^k$-curve $c: [0,T] \ap\field{H}$, we look for  a lift  $\g:[0,1]\ap {\cal C} ^L_{\cal P}$, say  $ t\ap u_t$, such that, for all $t\in [0,1]$,

 -- the associated  family $S_t=\dis\int_0^L u_t(s)ds$ of snakes satisfies  $S_t(L)=c(t)$ for all $t\in [0,1]$,

 -- the {\bf infinitesimal kinematic energy}:
 $\dis\frac{1}{2} ||\dot{\g}(t)||_{L^2}=\dis\frac{1}{2} G(\dot{\g}(t),\dot{\g}(t))$
  is minimal.\\

Then such a type of optimal problem has a solution if and only if the curve $c$ has a horizontal lift. We shall say that such a horizontal lift is an {\bf optimal control}.\\

 On the other hand, we can also ask when two positions $x_0$ and $x_1$  of the "head" of the snake can be joined  by a continuous  piecewise smooth curve $c$ which has an optimal control ${\g}$ as lift. As in finite dimension, the {\bf accessibility set} ${\cal A}(u)$, for some $u\in{\cal C}^L_{\cal P}$,  is the set of endpoints $\g(T)$ for any piecewise smooth   horizontal curve $\g:[0,T]\ap{\cal C}^L_{\cal P} $ such that $\g(0)=u$. In this case if $x_0=S_u(L)$ then any $z=S_{u'}(L)$ can be joined from $x_0$ by an absolutely continuous  curve   $c$ which has an optimal control when $u'$ belongs to ${\cal A}(u)$.\\

When $\mathbb{H}$ is finite dimensional, the set ${\cal A}(u)$  is exactly the orbit of the action. In finite dimension, given any horizontal distribution ${\cal D}$ on a finite dimensional manifold $M$, the famous {\it Sussmann's Theorem} (see \cite{Su}) asserts that  each accessibility set is a smooth immersed manifold which is an integral manifold of a distribution $\hat{\cal D}$ which contains $\cal D$  (i.e. ${\cal D}_x\subset \hat{\cal D}_x$ for any $x\in M$) and  characterized by:

 $\hat{\cal D}$ is the smallest distribution which contains $\cal D$  and which is invariant by the flow of any (local) vector field tangent to $\cal D$.\\

 From this argument, E.  Rodriguez proved that  the set ${\cal A}(u)$ is an immersed finite dimensional submanifold of ${\cal C}^L_{\cal P}$  in \cite{Ro}.

In the context of Banach manifolds  the reader can find some generalization of this Sussmann's  result in \cite{LaPe}. Unfortunately,  in our context,   this last results  give only some    density results on accessibility sets, with analogue construction as in finite dimension case (see \cite{PeSa}). \\

 Precisely, according  to observation \ref{Ei}, to each Hilbert basis $\{e_i,\;i\in\N\}$   the family ${\cal X}=\{E_i,\;i\in \N\}$  of (global) vector fields on ${\cal C}^L_{\cal P}$ generates the horizontal distribution ${\cal D}$. On the other hand  the family
  $${\cal Y}={\cal X}\bigcup\{[E_i,E_j],\; i,j\in I, \;i<j\}$$
   generates a weak Hilbert  distribution $\bar{\cal D}$ on ${\cal C}^L_{\cal P}$.
   Then we have:

\begin{The}\label{accesPeSa}\cite{PeSa}${}$\\
The distribution $\bar{\cal D}$  has the following properties:

(i) $\bar{\cal D}$ does not depend on the choice of the basis  $\{e_i,\;i\in\N\}$;

(ii) $\hat{\cal D}_x$ is dense in $\bar{\cal D}_x$ for all $x\in M$;

(iii) $\bar{\cal D}$ is integrable;

(iv) the accessibility set ${\cal A}(u)$ of a point  $u$ of any maximal integral manifold $N$ of $\bar{\cal D}$   is a dense subset of  $N$.\\
\end{The}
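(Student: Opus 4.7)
The plan is to exploit the infinitesimal action $\mathfrak{a}$ of $\mathfrak{so}_{HS}(\mathbb{H},1)=\mathfrak{h}\oplus\mathfrak{s}$ on $\Sp_{\mathbb{H}}$, extended pointwise (at each fiber $u(s)$) to the configuration space ${\cal C}^L_{\cal P}$, so that by formulas \ref{xi} and \ref{xixj} one has $E_i(u)(s)=\xi_i(u(s))$ and $[E_i,E_j](u)(s)=[\xi_i,\xi_j](u(s))$, the latter being a pointwise bracket on $\Sp_{\mathbb{H}}$. This realizes the generating family ${\cal Y}$ as the pointwise image under the infinitesimal action of $\{U_i\}\cup\{[U_i,U_j]\}=\{U_i\}\cup\{-\O_{ij}\}$, and the Lie bracket relations \ref{lierelations} for $[U_i,U_j]$, $[U_i,\O_{jl}]$ and $[\O_{ij},\O_{kl}]$ drive every structural computation on $\bar{\cal D}$.

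For part (i), a change of Hilbert basis $e'_i=\sum_j M_{ij}e_j$ with $M$ orthogonal gives $E'_i=\sum_j M_{ij}E_j$ with constant coefficients, by linearity of the horizontal gradient in its potential; hence $[E'_i,E'_j]=\sum_{k,l}M_{ik}M_{jl}[E_k,E_l]$, so ${\cal Y}$ is carried into itself by an orthogonal reindexing and its closed span $\bar{\cal D}$ is intrinsic. For part (iii), the relations \ref{lierelations} force $[E_i,[E_j,E_k]]$ into the algebraic span of the $E_l$'s and $[[E_i,E_j],[E_k,E_l]]$ into the algebraic span of the $[E_p,E_q]$'s, so that the algebraic span of ${\cal Y}$ is itself a Lie subalgebra of vector fields. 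Taking the weak Hilbert closure preserves bracket closure, and a Frobenius-type theorem for closed weak distributions on Banach manifolds in the sense of \cite{LaPe} then produces the maximal integral manifolds required by (iii).

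For part (ii) I would apply the Banach Sussmann orbit theorem of \cite{LaPe}. By construction $\hat{\cal D}$ is the smallest distribution containing ${\cal D}$ that is stable under the flows of its own sections; differentiating these flows at $t=0$ forces $\hat{\cal D}_x$ to contain every Lie bracket $[E_i,E_j](x)$, hence the algebraic span of ${\cal Y}(x)$. Since $\bar{\cal D}_x$ is by definition the closure of that algebraic span, and since $\hat{\cal D}_x\subset\bar{\cal D}_x$ by the bracket closure from (iii), the density is immediate. Part (iv) then follows because ${\cal A}(u)$ coincides with the orbit through $u$ of the pseudogroup of local flows generated by horizontal vector fields; by the same orbit theorem this orbit is an immersed submanifold of the maximal $\hat{\cal D}$-leaf, and via (ii) it is dense in the maximal $\bar{\cal D}$-leaf $N$.

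The principal obstacle is (iii). In the Banach/weak setting, Frobenius-type integrability of a closed distribution is not automatic: one must arrange complemented local trivializations and verify that the closure operation involved in passing from the algebraic span of ${\cal Y}$ to $\bar{\cal D}$ does not generate bracket directions outside the closed span. The interplay between the uniform norm $\|\cdot\|_\infty$ and the $L^2$ inner product $\langle\cdot,\cdot\rangle_{L^2}$ on $T_u{\cal C}^L_{\cal P}$, together with the need to equip each leaf of $\bar{\cal D}$ with a well-defined Banach manifold structure as a weak submanifold of ${\cal C}^L_{\cal P}$, is where the machinery of \cite{PeSa} and \cite{LaPe} becomes essential.
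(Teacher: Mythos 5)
Your proposal follows the route of \cite{PeSa} and \cite{LaPe}: generate $\bar{\cal D}$ by the $E_i$ and the brackets $[E_i,E_j]$, check bracket closure through the structure relations, and then invoke a Banach orbit/Frobenius theorem. That is genuinely different from what this paper does. Here the relation (\ref{infactionC}) identifies $\bar{\cal D}_u$ with the image of the infinitesimal action of $\mathfrak{g}=\mathfrak{h}\oplus\mathfrak{s}$, so the maximal integral manifold through $u$ is simply the orbit of $\mathfrak{M}_{HS}(\Sp_{\mathbb{H}})$ (integrability and basis-independence are then automatic for an orbit of a group acting by M\"obius transformations), and the density of ${\cal A}(u)$ in that orbit is obtained by pushing forward, through the equivariance $\mathfrak{a}(U_i)=E_i$, the horizontal connectivity of the dense weak subgroup $\mathfrak{M}^1_{HS}(\Sp_{\mathbb{H}})$ established in Theorem \ref{subR}. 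The whole point of the paper is that this group-theoretic argument replaces the analytic machinery you are appealing to.

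As written, your argument has two real gaps. The most serious is the inference from (ii) to (iv): pointwise density of $\hat{\cal D}_x$ in $\bar{\cal D}_x$ is a statement about tangent spaces and does not by itself yield density of ${\cal A}(u)$ in the leaf $N$, because ${\cal A}(u)$ consists only of endpoints of piecewise $C^1$ curves tangent to the much smaller distribution ${\cal D}$. One must actually produce, arbitrarily close to any point of $N$, the endpoint of a genuine ${\cal D}$-horizontal curve; this requires either a quantitative commutator-flow approximation as in \cite{PeSa}, or, as here, the exhibition of an explicit dense subset of $N$ --- the orbit ${\cal A}^1(u)$ of $\mathfrak{M}^1_{HS}(\Sp_{\mathbb{H}})$ --- every point of which is horizontally reachable, which is exactly what Theorem \ref{subR} (resting on Theorem \ref{prodexp} and Lemma \ref{bong}) provides. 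Your proposal never constructs such curves. The second gap is (iii) itself: you correctly flag it as the principal obstacle, but a ``Frobenius-type theorem for closed weak distributions'' is not available off the shelf for $\bar{\cal D}$, which is only a weak Hilbert distribution relative to $<\;,\;>_{L^2}$ inside the $||\;||_\infty$-Banach manifold ${\cal C}^L_{\cal P}$; verifying that the $L^2$-closure of the algebraic span of ${\cal Y}$ remains bracket-closed and admits integral manifolds is precisely the hard analytic content that the orbit description is designed to bypass. Acknowledging the obstacle does not close it.
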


In the following section we will give a new proof of this Theorem which use the natural action of $\mathfrak{M}_{HS}(\mathbb{S}_{\mathbb{H}})$ on ${\cal C}_{\cal P}^{L}$,  the sub-Riemannian  structure of $\mathfrak{M}_{HS}(\mathbb{S}_{\mathbb{H}})$ and  Theorem \ref{subR}. We also get a geometrical interpretation of the maximal integral manifold of $\cal D$

   \bigskip

 \subsection{ Action of $\mathfrak{M}_{HS}(\mathbb{S}_{\mathbb{H}})$ on ${\cal C}_{\cal P}^{L}$  and proof of Theorem \ref{1} } ${}$\\
 Since a configuration $u\in {\cal C}_{\cal P}^{L}$ is a curve $u:[0,L]\ap \mathbb{S}_{\mathbb{H}}$, we can naturally define an  action  of $\mathfrak{M}_{HS}(\Sp_{\mathbb{H}})$ on $ {\cal C}_{\cal P}^{L}$  (again denote by $\mathfrak{A}$) by
 $$\mathfrak{A}(\phi,u)(s)=\phi(u(s))\textrm{ for } s\in [0,L].$$
 Since  the action of  $\mathfrak{M}_{HS}(\Sp_{\mathbb{H}})$  on $\mathbb{S}_{\mathbb{H}}$ is smooth and effective,  the same is true for the action on $ {\cal C}_{\cal P}^{L}$.

 Let  $\mathfrak{a}:\mathfrak{m}_{HS}(\Sp_{\mathbb{H}})\ap \textrm{Vect}({\cal C}_{\cal P}^{L})$ be the associated infinitesimal action where  $\textrm{Vect}({\cal C}_{\cal P}^{L})$
denote the space of vector fields on ${\cal C}_{\cal P}^{L}$.  As previously, we identify $\mathfrak{m}_{HS}(\Sp_{\mathbb{H}})$ with $\mathfrak{g}_\infty$, and we have (cf \cite{Ha} or \cite{Ro})
$$\mathfrak{a}([U_i,U_j])=-[\mathfrak{a}(U_i),\mathfrak{a}(U_j)].$$

 Moreover, according to Proposition \ref{action S} and the characterization  (\ref{expnabla}) of $\textrm{grad}\phi$ and the definition of $E_{i}$, we have
 \begin{eqnarray}\label{infactionC}
\mathfrak{a}(U_{i})=E_{i} \textrm{ and } \mathfrak{a}([U_i,U_j])=\mathfrak{a}(\O_{ij})=-[E_{i},E_{j}].
\end{eqnarray}

Of course, we also have a bundle morphism (again denoted $\mathfrak{a}$):
$$ \mathfrak{a}: \mathfrak{g}\times {\cal C}_{\cal P}^{L}\ap T{\cal C}_{\cal P}^{L}.$$

Now, we consider the restriction $\mathfrak{A}^1$ of the previous action $\mathfrak{A}$ to $\mathfrak{M}^1_{HS}(\Sp_{\mathbb{H}})$ on $ {\cal C}_{\cal P}^{L}$ and we also have the same relation (\ref{infactionC}) for the restriction $\mathfrak{a}^1$ of $\mathfrak{a}$ to the Lie algebra $\mathfrak{g}_1=\mathfrak{m}^1_{HS}(\Sp_{\mathbb{H}})$ of $\mathfrak{M}^1_{HS}(\Sp_{\mathbb{H}})$.

  According to the notations of  Section 4.3 of  \cite{PeSa} the Banach space  $\mathbb{G}^2$ is isomorphic to   $\mathfrak{g}$. Therefore we have $\mathfrak{a}(\mathfrak{g}\times\{u\})={\cal D}(u)$. Therefore, from the proof of Lemma 4.4 and Claim 1, we obtain that  the orbit of the action $\mathfrak{A}$ through $u$ is exactly the maximal integral manifold of $\cal D$ through $u\in {\cal C}_{\cal P}^{L}$.\\

  On the other hand the orbit ${\cal O}^1(u)$ of the action $\mathfrak{A}^1$ through $u\in{\cal C}_{\cal P}^{L}$ is contained in the orbit ${\cal O}(u)$ of $\mathfrak{A}$ through $u$. Moreover  ${\cal O}^1(u)$ is dense in ${\cal  O}(u)$. But according to Theorem \ref{subR} we can obtain the inclusion ${\cal O}^1(u)\subset{\cal A}(u)$. Therefore  the proof of Theorem \ref{1} is complete.

\section{Appendix}\label{A}
\subsection{Appendix A1: proof of  Theorem \ref{prodexp} }\label{A1}${}$\\
Given a Hilbert space $\mathbb{H}$,  we denote by $SO_{HS}(\mathbb{H})$ the  Hilbert-Schmidt Lie group $SO(\mathbb{H})\cap GL_{HS}(\mathbb{H})$  provided with the topology of   the Hilbert-Schmidt norm and by $\mathfrak{so}_{HS}(\mathbb{H})$ its Lie algebra. At first we prove the following result (cf \cite{GaXu} for finite dimension)

\begin{Pro}\label{P}${}$\\
 The map  $\mathrm{Exp}:\mathfrak{so}_{HS}(\mathbb{H})\ap SO_{HS}(\mathbb{H})$ is surjective. More, precisely for each $Q\in SO_{HS}(\mathbb{H})$, there exists a family $\{\theta_j\}_{j\in J}$ with $0<\theta_j\leq \pi$ and a family of $\{B_j\}_{j\in J}$ with $B_j\in \mathfrak{so}_{HS}(\mathbb{H})$ such that $[B_k,B_j]=0$ for $k\not=j$ and $(B_j)^3=-B_j$  so that
$$Q=\dis\prod_{j\in J}\mathrm{Exp}(\theta_jB_j)=\mathrm{Exp}(\sum_{j\in J}\theta_j B_j).$$
Moreover, if $n_j$ is the rank of $B_j$ then $(|B|_{HS})^2=\dis\sum_{j\in J}n_j(\theta_j)^2$, where $ B=(\sum_{j\in J}\theta_j B_j)$.
\end{Pro}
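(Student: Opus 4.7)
The strategy is to transpose the standard block--diagonalisation of a finite--dimensional special orthogonal matrix to the Hilbert--Schmidt setting, using spectral theory of a compact perturbation of the identity. The hypothesis $Q\in SO_{HS}(\mathbb{H})$ means precisely that $Q-Id$ is a Hilbert--Schmidt, hence compact, operator. Complexifying $\mathbb{H}$ to $\mathbb{H}_{\mathbb{C}}$ turns $Q$ into a unitary operator whose spectrum lies on the unit circle. Compactness of $Q-Id$ forces the spectrum of $Q$ to consist of $\{1\}$ together with a (finite or countable) sequence of eigenvalues on $\Sp^{1}\setminus\{1\}$ accumulating only at $1$, each with finite multiplicity. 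Because $Q$ is real, non--real eigenvalues occur in conjugate pairs $e^{\pm i\theta_{j}}$ with $\theta_{j}\in(0,\pi)$, and the only other real eigenvalue that can occur besides $1$ is $-1$.

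First I would use this spectral picture to write an orthogonal decomposition
\[
\mathbb{H}=\mathbb{K}\oplus\bigoplus_{j\in J}F_{j},
\]
where $\mathbb{K}=\ker(Q-Id)$, each $F_{j}$ is a finite--dimensional $Q$--invariant subspace on which $Q$ acts as a direct sum of $2\times 2$ rotations by a single angle $\theta_{j}\in(0,\pi]$ (grouping together the eigenspaces corresponding to the pair $e^{\pm i\theta_{j}}$, or to the eigenvalue $-1$ when $\theta_{j}=\pi$). Then on each $F_{j}$ one has a canonical skew--symmetric generator $\bar B_{j}$, block--diagonal with blocks $\bigl(\begin{smallmatrix}0 & -1\\ 1 & 0\end{smallmatrix}\bigr)$, such that $Q|_{F_{j}}=\mathrm{Exp}(\theta_{j}\bar B_{j})$; extend $\bar B_{j}$ by zero on the orthogonal complement to obtain $B_{j}\in L(\mathbb{H})$. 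The relations $B_{j}^{3}=-B_{j}$, $B_{j}^{*}=-B_{j}$ and $[B_{j},B_{k}]=0$ for $j\neq k$ are then immediate since the $B_{j}$'s act non--trivially on pairwise orthogonal subspaces.

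The main analytic point is to verify that the formal sum $B=\sum_{j\in J}\theta_{j}B_{j}$ actually belongs to $\mathfrak{so}_{HS}(\mathbb{H})$, and that the identity $Q=\mathrm{Exp}(B)$ holds globally (not merely on each block). For the Hilbert--Schmidt estimate, a direct computation on each $F_{j}$ gives
\[
| (Q-Id)|_{F_{j}}|_{HS}^{2}=2n_{j}(1-\cos\theta_{j}),
\qquad n_{j}=\dim F_{j},
\]
and since the $F_{j}$'s are pairwise orthogonal $Q$--invariant subspaces whose direct sum contains the essential support of $Q-Id$, one has $\sum_{j}2n_{j}(1-\cos\theta_{j})\le|Q-Id|_{HS}^{2}<\infty$. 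Combined with the elementary inequality $1-\cos\theta\ge 2\theta^{2}/\pi^{2}$ on $[0,\pi]$, this yields $\sum_{j\in J}n_{j}\theta_{j}^{2}<\infty$, which is exactly $|B|_{HS}^{2}=\sum_{j}\theta_{j}^{2}|B_{j}|_{HS}^{2}=\sum_{j}n_{j}\theta_{j}^{2}$, giving both the Hilbert--Schmidt membership and the announced norm identity. The equality $Q=\mathrm{Exp}(B)$ then follows because $B$ is block diagonal with respect to the decomposition $\mathbb{K}\oplus\bigoplus_{j}F_{j}$, so $\mathrm{Exp}(B)$ restricts to $Id$ on $\mathbb{K}$ and to $\mathrm{Exp}(\theta_{j}\bar B_{j})=Q|_{F_{j}}$ on each $F_{j}$. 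The factorisation $Q=\prod_{j}\mathrm{Exp}(\theta_{j}B_{j})$ is the same statement, the (possibly infinite) product being unambiguous thanks to the commutation and the orthogonal--support property.

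The step I expect to be the main obstacle is the spectral reduction itself: in infinite dimension one must make precise the way in which the spectral projections of the unitary compact perturbation $Q$ yield a genuine orthogonal Hilbert sum decomposition $\mathbb{H}=\mathbb{K}\oplus\bigoplus F_{j}$ with finite--dimensional blocks, and control the real structure so that each $F_{j}$ is stable under conjugation and carries the standard block form. Once this decomposition is secured, the remaining arguments reduce to a finite--dimensional computation on each $F_{j}$ together with the Hilbert--Schmidt summability estimate described above.
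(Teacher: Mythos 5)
Your proof follows essentially the same route as the paper's: spectral decomposition of the compact operator $Q-Id$ on the complexification, grouping of the conjugate eigenvalue pairs $e^{\pm i\theta_j}$ into finite-dimensional real $Q$-invariant blocks, the finite-dimensional rotation normal form on each block, and extension of the block generators by zero to obtain commuting $B_j$ with $(B_j)^3=-B_j$ and orthogonal supports. The one place you go beyond the paper is the explicit summability estimate $\sum_j 2n_j(1-\cos\theta_j)\le |Q-Id|_{HS}^2<\infty$ combined with $1-\cos\theta\ge 2\theta^2/\pi^2$ on $[0,\pi]$, which actually proves that $B=\sum_j\theta_j B_j$ is Hilbert--Schmidt; the paper merely asserts that $B$ is well defined, so this is a welcome (and needed) supplement rather than a different method.
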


\begin{proof}

Let  $B\in \mathfrak{so}_{HS}(\mathbb{H})$, $B$  is a compact operator skew-adjoint. Therefore, in the complexification  $\mathbb{H}^{C}$  of $\mathbb{H}$, we can write $B=iA$, where $A$ is a self adjoint compact operator. It follows that  the eigenvalues of $B$ are of type $\{\pm i\l_j\}_{j\in J}$ where $J$ is a finite or countable set and  $\{\l_j\}$ is a  strictly positive decreasing sequence which converges to $0$ if $J$ is countable. From classical spectral theory we have:
\begin{eqnarray}\label{spectdecomp}
\mathbb{H}=\dis\bigoplus_{j\in J} \mathbb{E}_j\oplus \mathbb{K}
\end{eqnarray}
where  $\mathbb{E}_j$ is the subspace such that the restriction of $B$ to  $\mathbb{E}_j$ is  $\pm i\l_j Id_{\mathbb{E}_j}$ and $\mathbb{K}$ is the kernel of $B$. Moreover, each $\mathbb{E}_j$ is orthogonal to $\mathbb{E}_k$ and $\mathbb{K}$ for $k\not=j$.  In particular, $\mathbb{E}_j$ is an even finite dimensional space. We can choose a Hilbert  basis $\cup_{j\in J}\{e_{l_1},\cdots, e_{2l_j}\}\cup \{ e_l, l\in L\}$ of $\mathbb{H}$ such that $\{e_{l_1},\cdots, e_{2l_j}\}$ is a basis of  $\mathbb{E}_j$ and $\{e_l, l\in L\}$ is a basis of $\mathbb{K}$. Moreover such a choice can be done such that the restriction of $B$ to $\mathbb{E}_j$ is of type $\l_j\bar{ B}_j$  where $\bar{B}_j$ has a matrix of the form

\begin{eqnarray}\label{decompoB}
\begin{pmatrix} J_{l_1}&\cdots&0&\cdots&0\\
				0&\cdots&J_{l_r}&\cdots&0\\
				\cdots&\cdots&\cdots&\cdots\\
				0&\cdots&0&\cdots&J_{l_j}\\
				\end{pmatrix}
\end{eqnarray}

where each block $J_{l_r}=\begin{pmatrix} 0&-1\\
								1&0\\
								\end{pmatrix}$. From this construction, we see that $\{\pm i\l_j\}_{j\in j}$ is the set of  non zero eigenvalues of $B$ and $\mathbb{E}_j$ is the eigenspace associated to $\pm i\l_j$. \\
								
								 Let  $B_j$ be the endomorphism whose  restriction to $\mathbb{E}_j$  is $\dis\frac{1}{\l_j}B_{| \mathbb{E}_j}$ and which is $0$ on $(\mathbb{E}_j)^\perp$. By construction, we have:
$$B=\dis\sum_{j\in J}\l_jB_j,\;\;\;\; [B_k,B_j]=0, \textrm{ for } k\not=j, \textrm{ and } (B_j)^3=-B_j.$$
It follows that we get
\begin{eqnarray}\label{proddecompo}
Q=\textrm{Exp}B=\textrm{Exp}(\dis\sum_{j\in J}\l_jB_j)=\dis\prod_{j\in J}\textrm{Exp}(\l_jB_j).
\end{eqnarray}
In particular, the eigenvalues of $Q$ which are different from $1$ is the family $e^{\pm i \l_j}$. Thus  in (\ref{proddecompo})  each $e^{\pm i \l_j}$ can be written $e^{\pm i\theta_j}$ with $0<\theta_j\leq \pi$. and we have
		$$(|B|_{HS})^2=2\dis\sum_{j\in J}n_j(\theta_j)^2$$
where $n_j= $dim$\mathbb{E}_j$.\\

Conversely, consider any $Q\in SO_{HS}(\mathbb{H})$. Then, $C=Q-Id$ is compact and so the set of eigenvalues of $Q$ different from $1$ is at most countable. Since $Q$ is unitary  of  a real Hilbert space , we can write this set as $\{e^{\pm i\theta_j}\}_{j\in J}$. Note that each eigenspace of $Q$ is an eigenspace of $C$ and conversely. Moreover,  the  set of non zero eigenvalues of $C$ is  $\{e^{\pm i\theta_j}-1\}_{j\in J}$. Therefore we have a spectral decomposition associated to $C$ of type (\ref{spectdecomp}) where $\mathbb{K}$ is the kernel of $C$. Note that  the  restriction $Q_j$  of $Q$ to each finite dimensional space $\mathbb{E}_j$ is an isometry of this space whose eigenvalues are $\{e^{\pm i\theta_j}\}$. According to the classical Lemma of decomposition of rotations in finite dimension, (see \cite{Au} for instance),  we have an orthogonal basis $\{e_{l_1},\cdots, e_{2l_j}\}$ of $\mathbb{E}_j$  in which  $Q_j$ has a matrix of the form:
$$\begin{pmatrix} R_{l_1}&\cdots&0&\cdots&0\\
				0&\cdots&R_{l_r}&\cdots&0\\
				\cdots&\cdots&\cdots&\cdots\\
				0&\cdots&0&\cdots&R_{l_j}\\
				\end{pmatrix}$$

where each block $R_{l_r}=\begin{pmatrix} \cos \theta_{l_r}&-\sin \theta_{l_r}\\
								\sin \theta_{l_r}&\cos \theta_{l_r}\\
								\end{pmatrix}$. In fact we must have
								$$\theta_{l_r}\equiv\theta_j (\textrm{ modulo }\pi).$$
It follows that  we have $Q_j=\mathrm{Exp}(\theta_j\bar{B}_j)$ where $\bar{B}_j$ has a matrix of type (\ref{decompoB}) in the previous basis. As in the first part, let  $B_j$ be the endomorphism which is equal to $\bar{B}_j$ on $\mathbb{E}_j$ and is zero on $(\mathbb{E}_j)^\perp$.\\
On the other hand, let  $\hat{Q}_j$ be  the  invertible operator whose restriction to $\mathbb{E}_j$ is equal to $Q_j$ and which is the identity on $(\mathbb{E}_j)^\perp$. Of course the infinite composition $\dis\prod_{j\in J}\hat{Q}_j$ is equal to $Q$ and we get
$$Q=\dis\prod_{j\in J}\mathrm{Exp}(\theta_j{B}_j).$$
As in the first part, by construction, we again have $[B_k,B_j]=0$ it follows that $B=\dis\sum_{j\in J}\theta_j B_j$ is well defined and $|B|^{2}_{HS}=2\dis\sum_{j\in J}n_{j}(\theta_j)^2$.\\
\end{proof}

We also need the following result (see \cite{Ga} for finite dimension).

\begin{Pro}\label{T}${}$\\
Given a boost $T\in SO_{HS}(\mathbb{H},1)$, there exists $ U\in \mathfrak{h}$ such that $T=\mathrm{Exp}(U)$.
\end{Pro}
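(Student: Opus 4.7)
The plan is to give an explicit element $U \in \mathfrak{h}$ whose exponential is $T$, and to verify the identity by a direct two–dimensional reduction. Let $T = B_{v,\alpha}$ be the boost characterized by a nonzero $v \in \mathbb{H}$ and $\alpha \geq 0$ as in Proposition \ref{decompA}. Set $e = v/|v|$ and define
\[
U = \alpha\,U_e := \alpha\begin{pmatrix} 0 & [e]^* \\ [e] & 0 \end{pmatrix}.
\]
First I would check that $U \in \mathfrak{h}$: writing $e = \sum_i e_i^*(e)\,e_i$, one has $U_e = \sum_i e_i^*(e)\,U_i$ with $\sum_i (e_i^*(e))^2 = 1$, so $U$ is a genuine $\ell^2$-combination of the generators $U_i$ and lies in the Hilbert space $\mathfrak{h} = \mathfrak{h}_\infty$ defined in subsection \ref{Alorentz}.

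The next step is the computation of $U^2$. A short matrix calculation using the definitions of $[e]$ and $[e]^*$ gives
\[
U^2 = \alpha^2\begin{pmatrix} 1 & 0 \\ 0 & [e][e]^* \end{pmatrix},
\]
so that $U^2$ acts as $\alpha^2 \mathrm{Id}$ on the two–dimensional plane $\mathbb{R}\oplus \mathbb{R}e$ and vanishes on $\mathsf{H}_v = (\mathbb{R}e)^\perp \subset \mathbb{H}$. A further line shows $U^3 = \alpha^2 U$, whence by induction
\[
U^{2k+1} = \alpha^{2k} U \quad (k\geq 0),\qquad U^{2k} = \alpha^{2(k-1)} U^2 \quad (k\geq 1).
\]

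Summing the exponential series and using $\sum_{k\geq 0}\alpha^{2k}/(2k+1)! = (\sinh\alpha)/\alpha$ and $\sum_{k\geq 1}\alpha^{2k}/(2k)! = \cosh\alpha - 1$, one obtains
\[
\mathrm{Exp}(U) \;=\; \mathrm{Id} + \frac{\sinh\alpha}{\alpha}\,U + \frac{\cosh\alpha - 1}{\alpha^2}\,U^2.
\]
On $\mathsf{H}_v$ both $U$ and $U^2$ vanish, so $\mathrm{Exp}(U)|_{\mathsf{H}_v} = \mathrm{Id}_{\mathsf{H}_v}$. On the plane $\mathbb{R}\oplus\mathbb{R}e$, $U$ acts as $\begin{pmatrix} 0 & \alpha \\ \alpha & 0 \end{pmatrix}$ and $U^2$ as $\alpha^2\mathrm{Id}$, so
\[
\mathrm{Exp}(U)\big|_{\mathbb{R}\oplus\mathbb{R}e} \;=\; \begin{pmatrix} \cosh\alpha & \sinh\alpha \\ \sinh\alpha & \cosh\alpha \end{pmatrix}.
\]

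Finally I would compare with the normal form (\ref{decompoT}): sandwiching the middle matrix between $\begin{pmatrix} 1&0 \\ 0&Q\end{pmatrix}$ and $\begin{pmatrix} 1&0\\0&Q^*\end{pmatrix}$ with $Q(e_1)=e$ preserves $\mathbb{R}\oplus\mathbb{R}e$ and is the identity on $\mathsf{H}_v$, and one reads off the same block action. Hence $\mathrm{Exp}(U) = T$. There is no real obstacle here beyond keeping the bookkeeping straight: the only point requiring some care is the verification $U\in\mathfrak{h}$ (where one uses that $\mathfrak{h}$ is a Hilbert space, not merely the algebraic span of the $U_i$'s) and noticing that the minimal polynomial relation $U^3 = \alpha^2 U$ collapses the exponential series to a finite closed form.
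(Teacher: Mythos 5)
Your proof is correct and follows essentially the same route as the paper: both rest on the relation $U^3=\omega^2 U$ (with $\omega=|u|$), the resulting closed form $\mathrm{Exp}(U)=\mathrm{Id}+\frac{\sinh\omega}{\omega}U+\frac{\cosh\omega-1}{\omega^2}U^2$, and a comparison with a normal form of the boost. The only difference is cosmetic: you exhibit $U=\alpha U_{v/|v|}$ up front and match against the block form (\ref{decompoT}), whereas the paper matches against the form (\ref{notT}) and solves the resulting equation for $u$ in terms of $v$; your explicit verification that $U$ lies in the Hilbert space $\mathfrak{h}$ is a welcome touch of care.
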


The proof of this Proposition is a formal  adaptation of the corresponding result  in finite dimension of \cite{Ga}. We only give the essential arguments.

\begin{proof}

Let $U\in \mathfrak{h}$. We have  $U=\begin{pmatrix} 0& [u]^*\\
							[u]&0\\
							\end{pmatrix}$
	where $u\in \mathbb{H}$. We have $U^3= \o^2U$ where $\o=|u|$. By application of this relation  we easily get
	
$$\mathrm{Exp}(U)=Id_{\mathcal{H}}+\dis\frac{\sinh \o}{\o} U+\dis\frac{\cosh \o-1}{\o^2} U^2.$$
	As in finite dimension we obtain:
	$$\mathrm{Exp}(U)=\begin{pmatrix} \cosh \o&\dis\frac{\sinh \o}{\o}[u]^*\\
								\dis\frac{\sinh \o}{\o}[u]&Id_{\mathbb{H}}+\dis\frac{\cosh \o-1}{\o^2}[u][u]^*\\
								\end{pmatrix}. $$
We have the relation
$$\Big{(}Id_{\mathbb{H}}+\dis\frac{\cosh \o-1}{\o^2}[u][u]^*\Big{)}^2=Id_{\mathbb{H}}+\dis\frac{\sinh^2 \o}{\o^2}[u][u]^*$$
Finally, we get
$$\mathrm{Exp}(U)=\begin{pmatrix} \cosh \o&\dis\frac{\sinh \o}{\o}[u]^*\\
								\dis\frac{\sinh \o}{\o}[u]&\sqrt{Id_{\mathbb{H}}+\dis\frac{\sinh^2 \o}{\o^2}[u][u]^*}\\
								\end{pmatrix} .$$

On the other hand, from the proof of Proposition \ref{decompA}
we have $T=\begin{pmatrix}c&[v]^*\\
			[v]&\sqrt{Id_\mathbb{H}+[v ].[v]^*}\\
			\end{pmatrix}$ for some $v\in \mathbb{H}$.\\
Given $v\in \mathbb{H}$ we have then  to find $u\in \mathbb{H}$ which satisfies the following   equation:
$$\begin{pmatrix}c&[v]^*\\
			[v]&\sqrt{Id_{\mathbb{H}}+[v ].[v]^*}\\
			\end{pmatrix}=\begin{pmatrix} \cosh \o&\dis\frac{\sinh \o}{\o}[u]^*\\
								\dis\frac{\sinh \o}{\o}[u]&\sqrt{Id_{\mathbb{H}}+\dis\frac{\sinh^2 \o}{\o^2}[u][u]^*}\\
								\end{pmatrix} .$$
								
This equation can be solved as in finite dimension, point by point (cf \cite{Ga}).

\end{proof}

\begin{proof}[Proof of Theorem \ref{prodexp}]${}$\\ Let  $A\in SO_{HS}(\mathbb{H},1)$. From   Proposition \ref{CMMob}, we have  $A=PT$ where  $P=\begin{pmatrix}1&0\\
0&Q\\
\end{pmatrix} $ and  $Q$ belongs to $SO(\mathbb{H})$ and where $T$ is a boost.\\

From Proposition \ref{T}, there exists a family of endomorphisms $\{B_j\}_{j\in J}$ with $B_j\in\mathfrak{so}_{HS}(\mathbb{H})$ such that\\ $[B_k,B_j]=0$ for $j\not=k$  and a sequence $\{\theta_j\}_{i\in J}$ with $0<\theta_j\leq\pi$ so that $Q=\dis\prod_{j\in J}\mathrm{Exp}(\theta_j{B}_j)$. According to the isomorphism $Q\ap P=\begin{pmatrix}1&0\\
0&Q\\
\end{pmatrix} $ from $\mathfrak{so}_{HS}(\mathbb{H})$ to $\mathfrak{s}$, we may assume that $B_j$ belongs to $\mathfrak{s}$. It follows that we get
$$P=\dis\prod_{j\in J}\mathrm{Exp}(\theta_j{B}_j).$$
 According to Proposition \ref{T} the proof is complete.\\					
		\end{proof}

\subsection{Appendix A2:  proof of Lemma \ref{bong}} \label{A2}${}$\\
We first recall some result about sub-Riemannian geometry on $SU(1,1)$. At first, we can identify $\R^2$ with the complex space $\C$ it is classical that $SO_0(2,1)$ is isomorphic to $PSU(1,1)$ which is the connected components of the identity of the Lie group $SU(1,1)$. It follows that  $SU(1,1)$ is the group of invertible matrices of type
$\begin{pmatrix} z_1& z_2\\
			\bar{z}_1&\bar{z}_2\\
			\end{pmatrix}$
			where $z_1$ and $z_2$ belongs to $\C$.   Note that $SU(1,1)$ can be identified with $\C \times \mathbb{S}^1$. The Lie algebra $\mathfrak{su}(1,1)$ of $SU(1,1)$ is generated by:
	\begin{center}	
		$X=\dis\frac{1}{2}\begin{pmatrix} 0&-1\\
						-1&0\\
						\end{pmatrix}\;\;\;\;$   $Y=\dis\frac{1}{2}\begin{pmatrix} 0&i\\
						-i&0\\
						\end{pmatrix}\;\;\;\;$ $Z=\dis\frac{1}{2}\begin{pmatrix} -i&0\\
						0&i\\
						\end{pmatrix}$
						\end{center}
						We have the bracket relations
						\begin{center}
$[X,Y]=-Z,\;\;\;\;\;$ $[X,Z]=-Y,\;\;\;\;\;$ $[Y,Z]=X,\;\;\;\;\;$
\end{center}
On $SU(1,1)$ we consider the left invariant distribution $\D$ generated by $X$ and $Y$ and the left  invariant Riemannian metric induced by $\dis\frac{1}{2}Tr(X_1X_2)$ on the subspace generated by $X$ and $Y$. We get a sub-Riemannian structure $(SU(1,1),\D,g)$ on $SU(1,1)$. Let $\d$ be the left-invariant horizontal distance associate to this structure. The universal covering $\tilde{SU}(1,1)$ can be identified with $\C\times\R$. The canonical projection $\r: \tilde{SU}(1,1)\ap {SU}(1,1)$ is given by:
$$(z,t)\ap \begin{pmatrix}\sqrt{1+|z|^2}e^{it}&z\\
			\bar{z}&\sqrt{1+|z|^2}e^{-it}\\
			\end{pmatrix}$$
For our purpose, we need only the following partial result of \cite{GrVa}:

\begin{Pro} \label{SU11} ${}$\\
Let $A= \begin{pmatrix} e^{it}&0\\
			0&e^{-it}\\
			\end{pmatrix}$ with $ t\not=0$. There exists a (normal) minimal length horizontal geodesic   which joins $Id$ to $A$ and  the horizontal distance $\d(Id,A)=|\theta|$ for $0< |\theta|\leq \pi$ and $\pm \theta\equiv t\; (mod \;\pi)$.
			\end{Pro}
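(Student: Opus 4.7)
By left-invariance, a horizontal curve $\gamma : [0,T] \to SU(1,1)$ starting at $Id$ is determined by its left-logarithmic derivative $\gamma^{-1}\dot\gamma = u(s) X + v(s) Y$, with length $\int_0^T \sqrt{u^2+v^2}\,ds$. The plan is first to apply Pontryagin's maximum principle in normal form with Hamiltonian $H = \tfrac{1}{2}(p_X^2 + p_Y^2)$ on $\mathfrak{su}(1,1)^*$. Using the brackets $[X,Y]=-Z$, $[X,Z]=-Y$, $[Y,Z]=X$, the Euler--Poincar\'e equations give
\begin{equation*}
\dot p_X = -p_Y p_Z, \qquad \dot p_Y = p_X p_Z, \qquad \dot p_Z = 0,
\end{equation*}
so that $w := p_Z$ is conserved and $(p_X, p_Y)$ rotates uniformly at angular speed $w$. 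Arc-length normalization $p_X^2 + p_Y^2 \equiv 1$ then reduces the normal extremals to a one-parameter family indexed by $w \in \R$.

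Next I would exploit the principal fibration $\rho : SU(1,1) \to D = SU(1,1)/U(1)$ onto the Poincar\'e disk. Since $X,Y$ project to an orthonormal frame of the hyperbolic metric at $0 \in D$, $\rho$ is a Riemannian submersion on the horizontal distribution, and horizontal curves descend to curves of the same length in $(D, g_{\mathrm{hyp}})$; the extremals above project to hyperbolic curves of constant geodesic curvature determined by $w$. The target $A$ lies in the fiber $\rho^{-1}(0)$, so any minimizer must project to a closed loop through $0$, which forces the projected curve to be a hyperbolic circle through $0$ (the elliptic case $|w|>1$). By the Maurer--Cartan relation $dZ^* = X^* \wedge Y^*$ on $\mathfrak{su}(1,1)$, the $U(1)$-holonomy of a horizontal lift equals, up to sign, the signed hyperbolic area enclosed.

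The final step is explicit: a hyperbolic circle through $0$ of radius $r$ has circumference $2\pi\sinh r$ and encloses area $2\pi(\cosh r - 1)$. Matching the holonomy after $k$ traversals against the rotation angle defining $A$, and minimizing the resulting length over $r>0$ and $k \in \N$, produces $\d(Id, A) = |\theta|$ with $\theta \in (0, \pi]$ satisfying $\pm\theta \equiv t \pmod \pi$; existence of the minimizer follows because $SU(1,1)$ is three-dimensional contact and its sub-Riemannian balls are compact. I would lift the detailed minimization, together with the identification of the period $\pi$ (reflecting the involution $g \mapsto -g$ acting trivially on the base $D$), directly from \cite{GrVa}. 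The main obstacle is this last matching: the careful bookkeeping of signed area, winding number, and $U(1)$-periodicity that conspires to produce the clean linear formula $\d = |\theta|$, rather than a square-root law as in the Heisenberg analogue.
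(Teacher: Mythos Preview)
The paper does not prove this proposition at all: it is introduced with the words ``For our purpose, we need only the following partial result of \cite{GrVa}'' and then simply stated, with no argument supplied. So there is no ``paper's own proof'' to compare your proposal against; both you and the authors ultimately defer to Grong--Vasil'ev.

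That said, your sketch is the right architecture for how the result is actually obtained in \cite{GrVa}: the PMP/Euler--Poincar\'e reduction, the conservation of $p_Z$, and the interpretation via the Riemannian submersion $SU(1,1)\to D$ onto the hyperbolic disk are exactly the ingredients used there. Your honest flag at the end is also well placed: the step you label ``the main obstacle'' is genuinely where the work lies. The naive isoperimetric heuristic (minimize circumference $2\pi k\sinh r$ subject to enclosed area $2\pi k(\cosh r-1)$ equal to a prescribed holonomy) gives, for small $r$, a Heisenberg-type square-root law $L\sim\sqrt{\text{area}}$, not the linear answer $\d(Id,A)=|\theta|$. What produces the linear formula in \cite{GrVa} is a more careful parametrization of the normal extremals and the identification of which extremal in the one-parameter family indexed by $w=p_Z$ actually hits the given fiber element with minimal length, together with the $\pi$-periodicity coming from the center $\{\pm Id\}$. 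Your outline does not yet contain that mechanism, so as written it is a plan that points to the right reference rather than a self-contained proof --- which, to be fair, is exactly the status the paper itself grants this proposition.
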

Now,  we have an isomorphism from $SU(1,1)$ to  $SO(2,1)$  given by:

$$\begin{pmatrix}\sqrt{1+|z|^2}e^{it}&z\\
			\bar{z}&\sqrt{1+|z|^2}e^{-it}\\
			\end{pmatrix}	\ap	\begin{pmatrix}1&0&0\\
			0&e^{it}&0\\
			0&0&e^{-it}\\
			\end{pmatrix}\begin{pmatrix}\sqrt{1+|z|^2}&\textrm{Re}(z)&\textrm{Im}(z)\\
			\textrm{Re}(z)&a&b\\
			\textrm{Im}(z)&b&c\\
			\end{pmatrix}$$
where ${\begin{pmatrix}a&b\\
			b&c\\
			 \end{pmatrix}}^2=\begin{pmatrix}\textrm{Re}(z)^2&\textrm{Re}(z)\textrm{Im}(z)\\
			\textrm{Re}(z)\textrm{Im}(z)&\textrm{Im}(z)^2\\
			\end{pmatrix}$.\\
The induced isomorphism between Lie algebra  is then
$\begin{pmatrix}it&z\\
\bar{z}&-it\\
\end{pmatrix}\ap \begin{pmatrix}1& \textrm{Re}(z)&\textrm{Im}(z)\\
					\textrm{Re}(z)& \cos t&-\sin t\\
					\textrm{Im}(z)&\sin t&\cos t\\
					\end{pmatrix}$.
As a consequence we get  an isomorphism between the sub-Riemannian structure on $SU(1,1)$ and the sub-Riemannian structure on $SO(2,1)$.

\begin{proof}[Proof of Lemma \ref{bong}]${}$\\
Recall that  $\bar{A}_j=\begin{pmatrix}1&0\\
							0&\mathrm{Exp}(\bar{\cal B}_j)\\
							\end{pmatrix}$
and $\bar{\cal B}_j$ has a decomposition (\ref{decompoB})   in  diagonal blocks $ \theta_jJ_{l_r}$. Each block $J_{l_r}$ gives rise to an element of $SO(\mathbb{F}_{l_r},1)$ where $\mathbb{F}_{l_r}$ is a plane in $\mathbb{E}_j$. Therefore, according to Proposition \ref{SU11} , via the previous isomorphism, we have a horizontal curve in $\bar{\g}_{l_r}:[0,T_{l_r}]\ap SO(\mathbb{F}_{l_r},1)$ arc-length parameterized whose length is $\theta_j$ such that $\bar{\g}_{l_r}(0)=Id_{\mathbb{F}_{l_r}}$ and $\bar{\g}_{l_r}(T_{l_r})= \theta_jJ_{l_r}$. In particular $T_{l_r}=|\theta_j|$. We get a curve $\g_{l_r}: [0,\theta_j]\ap SO(\mathbb{E}_j,1)$ of length $\theta_j$, which joins $Id_{\mathbb{E}_j}$ to some element $\theta_j\hat{J}_{l_r}$ of $ SO(\mathbb{E}_j,1)$
$$(\g_{l_r})_{| \mathbb{F}_{l_r}}=\bar{\g}_{l_r},\;\textrm{ andÊ} (\g_{l_r})_{| [\mathbb{F}_{l_r}]^\perp}=id.$$
It follows that  the curve
$\g_j$, obtained by concatenation of the family $\g_{l_r}$ for $ l_r=1,\cdots l_j$, is  defined on $[0,n_j\theta_j]$, $\g_j$ is an horizontal curve in  $ SO(\mathbb{E}_j,1)$, of length $n_j\theta_j$ which  joins $Id_{\mathbb{E}_j}$ to $A_j$.

\end{proof}

\end{document}